\theoremstyle{thmstyleone}
\newtheorem{theorem}{Theorem}
\newtheorem{proposition}[theorem]{Proposition}
\newtheorem{lemma}[theorem]{Lemma}
\newtheorem{corollary}[theorem]{Corollary}
\theoremstyle{thmstyletwo}
\theoremstyle{thmstylethree}
\newtheorem{definition}{Definition}
\newcommand{\abs}[1]{\lvert {#1} \rvert}
\begin{document}

\title[Robust Detection of Small Holes by the RDAD Filtration]{Detection of Small Holes by the Scale-Invariant Robust Density-Aware Distance (RDAD) Filtration}

\author*[1]{\fnm{Chunyin (Alex)} \sur{Siu}}\email{cs2323@cornell.edu}

\author[2]{\fnm{Gennady} \sur{Samorodnitsky}}\email{gs18@cornell.edu}

\author[2]{\fnm{Christina} \sur{Lee Yu}}\email{cleeyu@cornell.edu}

\author[3]{\fnm{Andrey} \sur{Yao}}\email{awy32@cornell.edu}

\affil*[1]{\orgdiv{Center of Applied Mathematics}, \orgname{Cornell University}, \orgaddress{\street{Frank H.T. Rhodes Hall, Cornell University}, \city{Ithaca}, \postcode{14853}, \state{NY}, \country{USA}}}

\affil[2]{\orgdiv{School of Operations Research and Information Engineering}, \orgname{Cornell University}, \orgaddress{\street{Frank H.T. Rhodes Hall, Cornell University}, \city{Ithaca}, \postcode{14853}, \state{NY}, \country{USA}}}

\affil[3]{\orgdiv{Department of Mathematics}, \orgname{Cornell University}, \orgaddress{\street{Malott Hall, Cornell University}, \city{Ithaca}, \postcode{14853}, \state{NY}, \country{USA}}}

\abstract{
A novel topological-data-analytical (TDA) method is proposed to distinguish, from noise, small holes surrounded by high-density regions of a probability density function. The proposed method is robust against additive noise and outliers. 
Traditional TDA tools, like those based on the distance filtration, often struggle to distinguish small features from noise, because both have short persistences. An alternative filtration, called the Robust Density-Aware Distance (RDAD) filtration, is proposed to prolong the persistences of small holes of high-density regions. This is achieved by weighting the distance function by the density in the sense of Bell et al. The concept of distance-to-measure is incorporated to enhance stability and mitigate noise. The persistence-prolonging property and robustness of the proposed filtration are rigorously established, and numerical experiments are presented to demonstrate the proposed filtration's utility in identifying small holes.
}

\keywords{
Topological data analysis, topological inference, random topology, 
weighted filtration, distance-to-measure, topological bootstrapping
}

\maketitle

\section{Introduction}

Topological data analysis is a non-parametric approach to data analysis that looks for topological features, like connected components, loops and cavities. For some datasets, such features are indeed the dominant features. Consider, for instance, the cosmologically motivated dataset \cite{icke91_cosmic_void_voronoi} on the left of \cref{fig:TDA_illustration_small_features}, which we will revisit in \cref{sec:voronoi}. The regions that the data points avoid form conspicuous holes, and the topological description of the these holes may offer a unique insight into the dataset. Indeed, since the seminal work of \cite{carlsson09_topodata}, TDA has been used for a wide range of applications \cite{chazal21_TDA_survey_data, perea19_TDA_survey_time_series, aktas19_TDA_survey_network, buchet18_TDA_survey_material_science, xu19_cosmic_void_TDA, salch21_TDA_survey_MRI}.

Traditionally, large topological features are emphasized over small ones. \emph{Persistent homology}, an important tool in TDA, captures the evolution of the homology of a family of topological spaces associated to the given dataset. In the traditional setup, homology classes represented by geometrically larger cycles tend to \emph{persist} longer, or equivalently, have longer \emph{persistences}. They are given more emphasis because, on one hand, they tend to describe global structures of the dataset, and on the other, random variation of the data points tend to give rise to a large number of artificial small cycles.

\begin{figure}[t]
\centering
\includegraphics[trim = {2cm, 0cm, 2cm, 0cm}, clip, width = 0.45\linewidth]{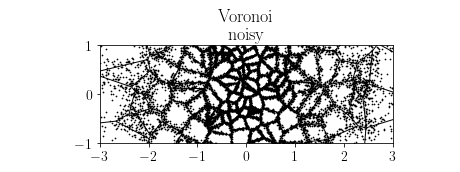}
\includegraphics[width = 0.45\linewidth]{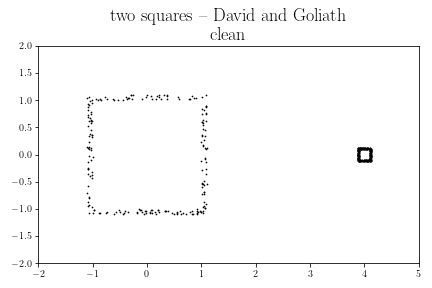}

\caption{Datasets with small topological features.}
\label{fig:TDA_illustration_small_features}
\end{figure}

However, small holes could be relevant too. For instance, in the toy example on the right of \cref{fig:TDA_illustration_small_features}, points are sampled from two squares with different sizes and densities. Since the smaller square has a higher density, it may be more relevant than the bigger square. Note that density consideration is crucial in the identification of small holes--in the extreme case, a ``small square'' formed by only four points is less likely a true hole than four points that happen to be nearby. The aforementioned toy example will be discussed further in \cref{sec:properties_small_dense_holes}. Beyond toy examples, small holes have been found to be relevant in practice too. They could be signs of enclave communities in network analysis \cite{stolz17_smallFeatures_networks, feng21_smallFeatures_voting}; or evidence of fractal structures or high-curvature regions \cite{jaquette20_smallFeatures_fractalDimension, bubenik20_smallFeatures_curvature}. Some datasets may only have small holes, or have holes with a wide range of sizes \cite{motta18_smallFeatures_hexagonalLattices, xia14_smallFeatures_protein, aragoncalvo12_cosmic_void_hierarchy, wilding21_cosmic_void_TDA}. Sometimes small features have better predictive power \cite{bendich16_brainArteryTrees}.

In the TDA literature, various approaches have been proposed to handle datasets with varying scales, and hence handle their smaller features more fairly. The multi-parameter-persistence approach has gained a lot of attention lately \cite{carlsson09_multipersistence, sheehy12_multicover, blumberg21_bipersistence_stability, lesnick15_interactive_bipersistence, moon18_persistence_terrace}. 
Alternatively, conformal geometry can be used to magnify small holes. For instance, the \emph{continuous $k$-NN graph} \cite{berry19_kNN_graph} and \emph{density-scaled Vietoris and Cech complexes} \cite{hickok22_density_scaled_filtration} were proposed, and the latter was shown to be stable against uniformly bounded extrinsic perturbation.

In the present work, a novel robust single-parameter scale-invariant approach, called the \emph{Robust Density-Aware Distance (RDAD) Filtration} is proposed to identify small holes. The proposed method scales the ambient Euclidean distance with density in the sense of Bell et al \cite{bell19_weighted_persistence} (see \cite{berry19_kNN_graph, hickok22_density_scaled_filtration} for conformal interpretations of such scaling) and robustness is ensured by incorporating the concept of \emph{distance-to-measure (DTM)}, which was first proposed in \cite{chazal11_DTM} and was further studied and generalized in \cite{buchet16_DTM_approximation, chazal18_DTM_statistics,anai19_DTM_generalizations}.

The proposed approach magnifies small holes of dense regions. Under suitable conditions, if a low-density region is surrounded by a high-density region, it gives rise to a homology class whose persistence scales with both a positive power of the density level and the size of the low-density region. Therefore, even if the low-density region is small, its persistence can still be large if the surrounding region  has a high enough density. This is made precise in \cref{cor:codim_1_features}.

The proposed approach is scale-invariant, in the sense that the persistences of all homology classes remain the same when the dataset is uniformly scaled. Therefore, the persistences of the topological features of the dataset are not reduced no matter by how much the dataset is shrunk. This is made precise in \cref{prop:scale_invariance}.

The proposed approach is robust. If a density is perturbed morderately in the Wasserstein metric and in the sup-norm, then the persistences of all homological features are also perturbed moderately. In particular, the proposed approach is provably robust against additive noise (with possibly unbounded support) and outliers. This is made precise in \cref{thm:robust} and \cref{cor:robust}.

These properties will be illustrated through variations of the toy example on the right subplot of
\cref{fig:TDA_illustration_small_features}. The more complicated synthetic dataset on the left subplot will then be studied. We also study a dataset of the locations of American cellular towers.

The rest of the paper is organized as follows. After reviewing the mathematical background in \cref{sec:background}, we define the proposed filtration in \cref{sec:proposed_filtration} and discuss its properties  in \cref{sec:properties}. We discuss bootstrapping in \cref{sec:bootstrapping} and present numerical simulations in \cref{sec:simulations}. A discussion and the conclusion are presented in \cref{sec:discussion,sec:conclusion}.
We collect proofs in \cref{sec:proofs} and simulation variables in \cref{sec:simulation_parameters}. An implementation of the proposed method is available at \url{https://github.com/c-siu/RDAD}.

\section{Background}
\label{sec:background}

We first review the theory of persistent homology in \cref{sec:persistent_homology}. In \cref{sec:specific_filtrations}, we discuss the distance filtration, which is the backbone of the traditional TDA approach, as well as various alternatives to the distance filtration, and we will explain their relevance to the present work. We conclude this section with a brief review of the theory of density estimation in \cref{sec:review_density_estimation}.

\subsection{Persistent Homology}
\label{sec:persistent_homology}

Persistent homology captures the evolution of the homology of a family of topological spaces. We review general definitions in this subsection and we discuss specific filtrations in the next.
We refer the reader to  \cite{edelsbrunner10comptopo,otter17_persistent_homology} for detailed expositions.

\paragraph{Filtrations and Persistence Diagrams}

A \emph{filtration} is a family $(X_t)$ of topological spaces such that 
$$X_s \subseteq X_t \text{ whenever } s \leq t.$$
The persistent homology of a filtration $(X_t)$ is the family of homology groups $(H_*(X_t))$ along with the maps between them induced by inclusion. Homology classes appear and vanish as the parameter $t$ varies, and the parameters at which they do so are called the \emph{birth time} and the \emph{death time} of the class. The class's \emph{persistence} is the difference of its birth and death times. The death time is infinite if the class never vanishes.

These birth and death times can be succinctly summarized in 
\emph{persistent diagrams}. The $k^{th}$ persistence diagram is a multiset of points in the extended quadrant $[0, \infty) \times [0, \infty]$. The $x$- and $y$-coordinates of each point in this multiset is the birth and death times of a $k$-dimensional homology class.

\paragraph{Stability, Interleaving and Bottleneck Distance}

Persistence diagrams are stable against certain perturbations of the filtration. This can be made precise using two similarity measures: the \emph{interleaving distance} for filtrations and the \emph{bottleneck distance} for persistence diagrams. Both similarity measures are symmetric and satisfy the triangle inequality. The map from filtrations to persistence diagrams is stable in the sense that the change in the persistence diagrams measured with bottleneck distance is bounded from above by the change in the filtration measured by the interleaving distance.
The two similarity measures are defined as follows.

Two filtrations $(X_t)$ and $(Y_t)$ are said to be \emph{$\varepsilon$-interleaved} if
\begin{equation}\label{eqn:interleaving}
X_{t} \subseteq Y_{t + \varepsilon} \text{ and } Y_{t} \subseteq X_{t + \varepsilon}
\end{equation}
for every parameter $t$. The interleaving distance of two filtrations is the infimum of all $\varepsilon$'s for which the two filtrations are $\varepsilon$-interleaved.

The bottleneck distance $W_\infty(P, Q)$ between persistence diagrams $P$ and $Q$ is the minimal $\Delta \geq 0$ such that there exists a ``bijective" pairing of points in $P$ and $Q$ with the sup-norm distance of the points in each pair bounded above by $\Delta$. ``Bijective" is in quotes because in general, the two diagrams may not have the same number of points, and excessive points are allowed to be paired with points on the diagonal $\{(x, x): x \in \mathbb{R}\}$. In symbols, we have
$$W_\infty(P, Q) = \inf_{\varphi: P \to Q \text{``bijective"}} \sup_{p \in P} \|p - \varphi(p)\|_\infty.$$
For points with infinite death time, we adopt the convention that $\infty - \infty = 0$ and $\infty - x = \infty$ for $x \in \mathbb{R}$. 

The bottleneck distance metric ball is useful for discerning significant features. In \cite{fasy14_confidence_set} as well as in the present work, a confidence set of persistence diagrams is defined as the bottleneck distance metric ball whose center is the empirical persistence diagram and whose radius $r$ is the significance threshold determined by bootstrapping. A homology class is considered significant if and only if the corresponding point $p$ in the empirical diagram lies above the line $y = x + 2r$, where $r$ is the significance threshold obtained by bootstrapping, because any diagram in the ball must have a non-diagonal point paired with the point $p$ in the empirical diagram.

\paragraph{Sublevel Filtration}

Given a function $f: X \to \mathbb{R}$ on a topological space $X$, its \emph{sublevel filtration} consists of the \emph{sublevel sets} $$f^{-1}(-\infty, t] = \{x : f(x) \leq t\}$$
of $f$. We often abuse the terminology and identify the sublevel filtration of a function with the function itself. All filtrations in the present work are sublevel filtrations.

Every pair of continuous functions $f, g: X \to \mathbb{R}$ on a compact space $X$ is $\|f - g\|_{L^\infty(X)}$-interleaved. Hence functions close together in the sup-norm have similar persistence diagrams. In the present work, all convergence results are locally uniform, and hence the persistence diagrams of the corresponding functions (when restricted to a compact set) become similar.

\subsection{Specific Filtrations}
\label{sec:specific_filtrations}

The most commonly used filtration is the distance filtration. While it can identify clean global topological signals, it is less useful for small and noisy features. To overcome this, multiple alternatives have been suggested. In this subsection, after briefly discussing the distance filtration, we review Bell et al's weighted filtration \cite{bell19_weighted_persistence} and the distance-to-measure filtration \cite{chazal11_DTM, buchet16_DTM_approximation, chazal18_DTM_statistics, anai19_DTM_generalizations}. The proposed filtration adapts the weighted filtration with density as the weight, and it incorporates the concept of distance-to-measure to enhance robustness.

\subsubsection{The Distance Filtration}

The distance filtration of a compact set $K$ in a metric space is the sublevel filtration of $d_K(x) = \min_{\xi \in K} d(x, \xi)$. If $K$ is finite, the sublevel sets are unions of metric balls, which grow in size as the filtration parameter increases. The distance filtration of data points in a Euclidean space is one of the most commonly used filtrations in topological data analysis. See \cref{fig:TDA_illustration_cech} and its caption for a simple example.

\begin{figure}[t]
\centering
\includegraphics[width=0.22\linewidth]{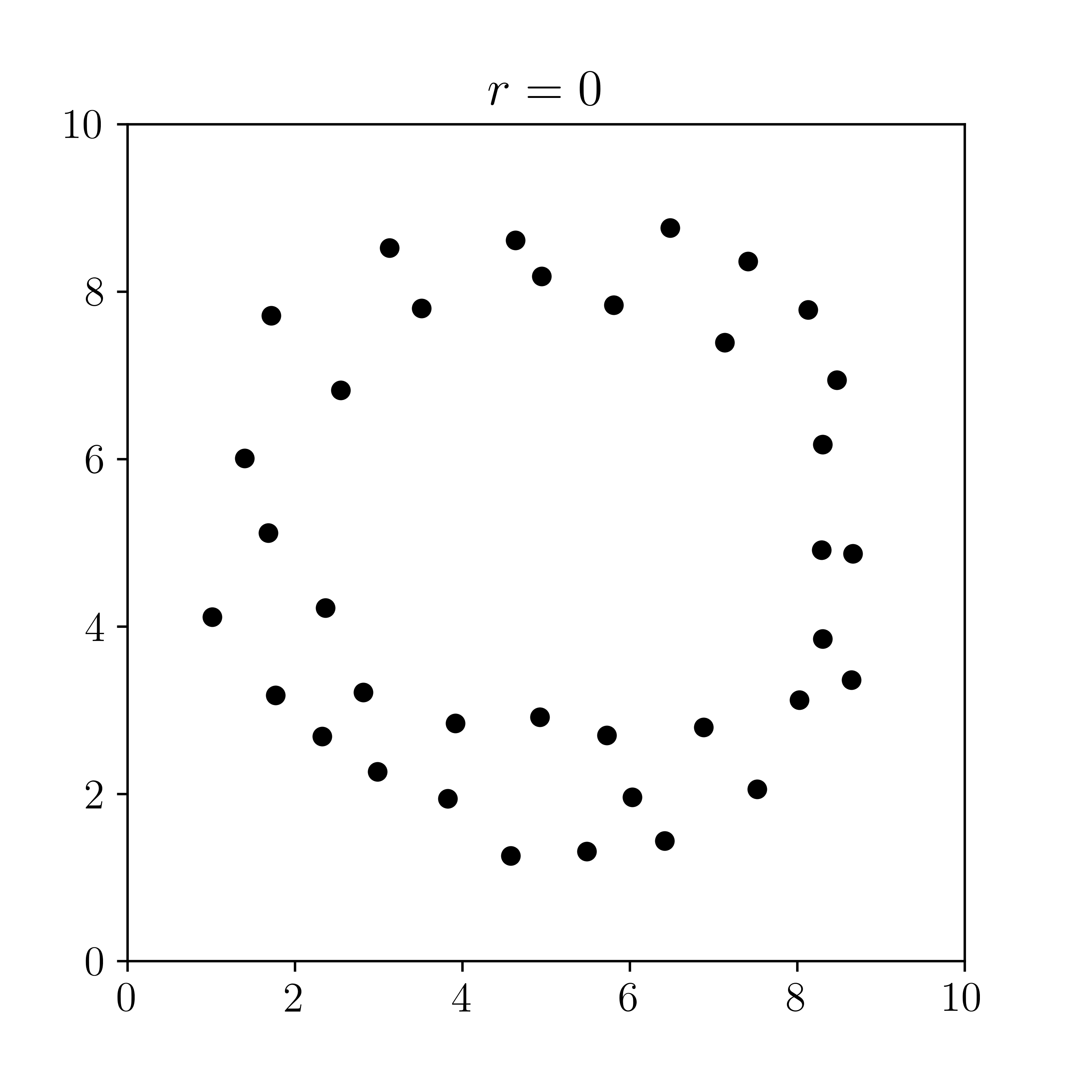}
\includegraphics[width=0.22\linewidth]{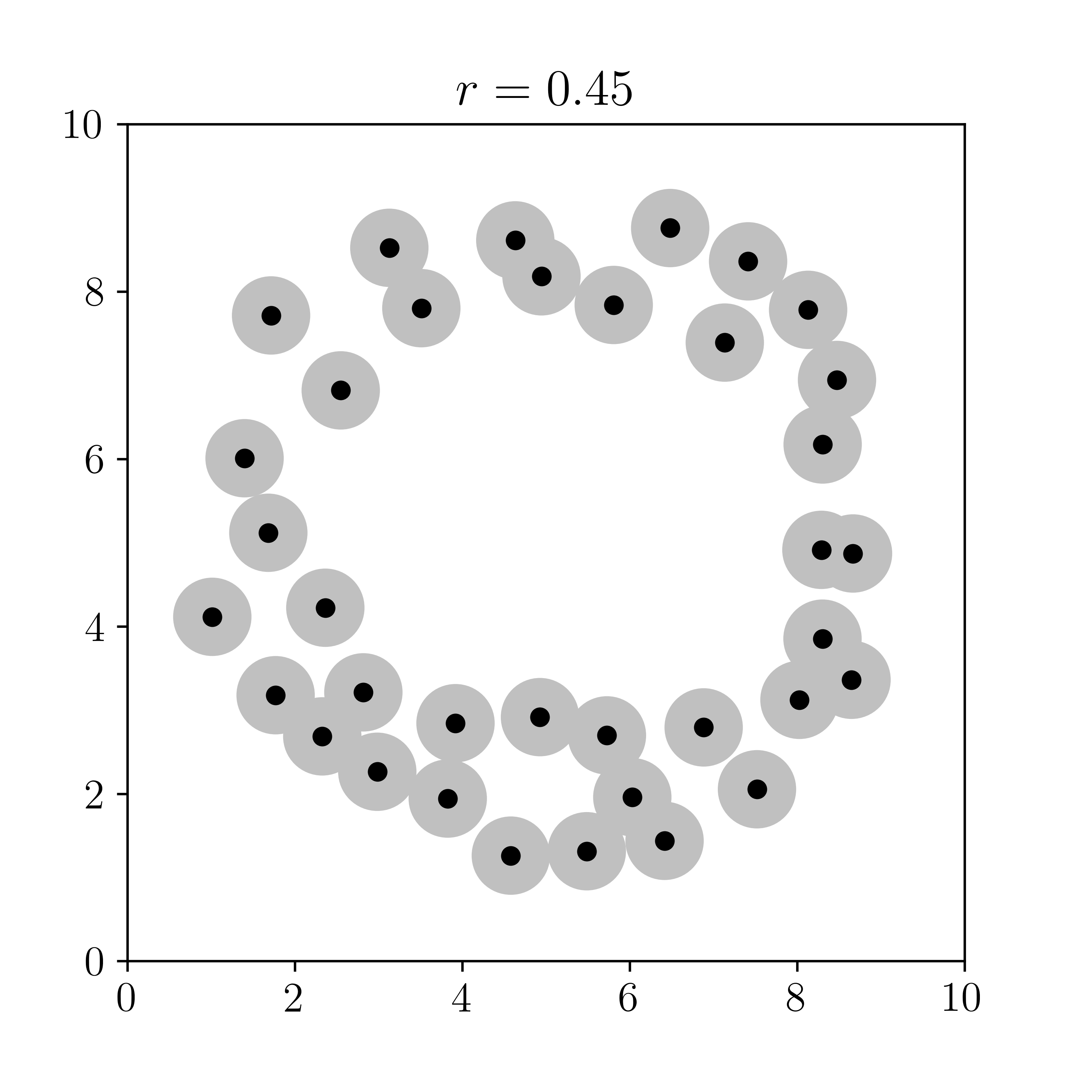}
\includegraphics[width=0.22\linewidth]{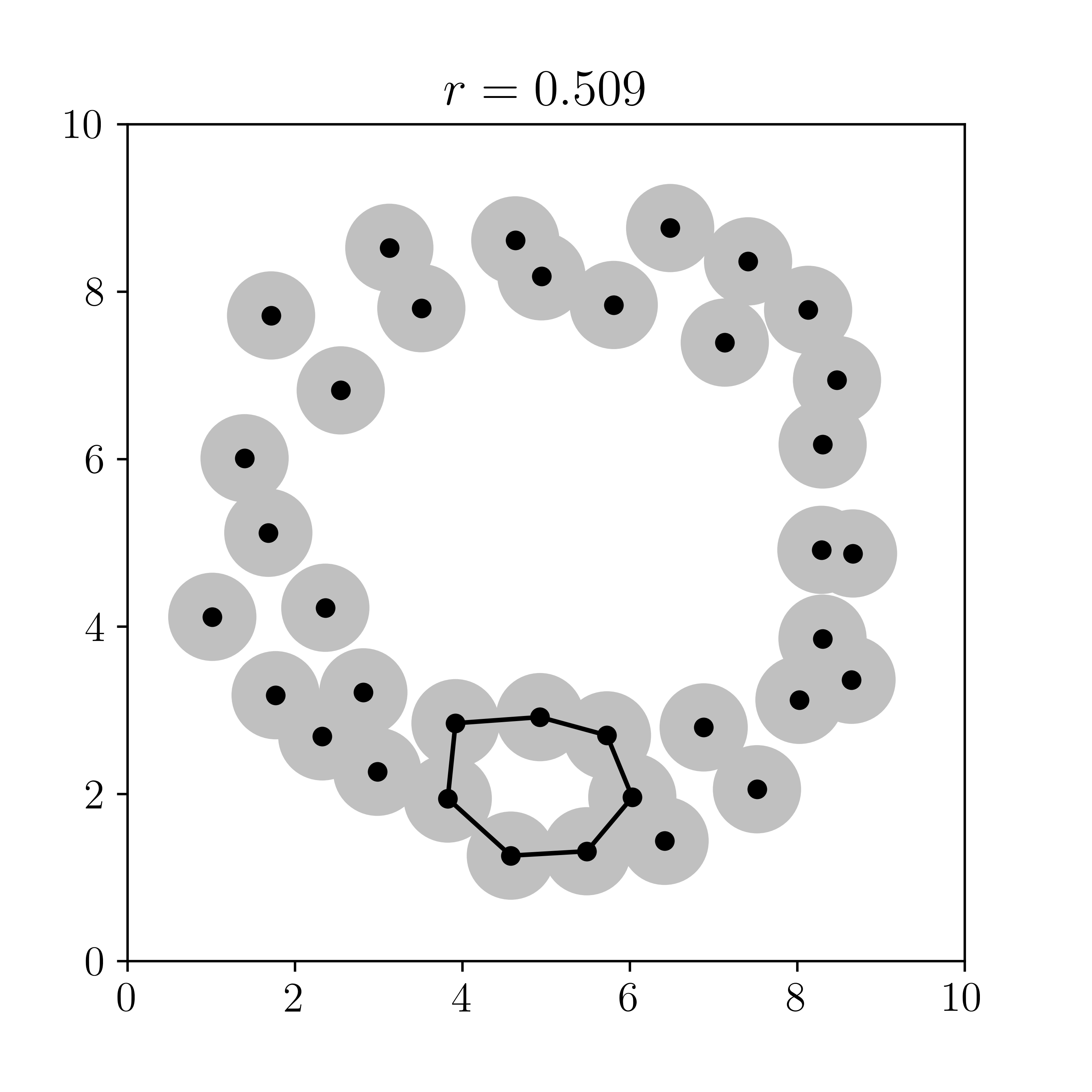}
\includegraphics[width=0.22\linewidth]{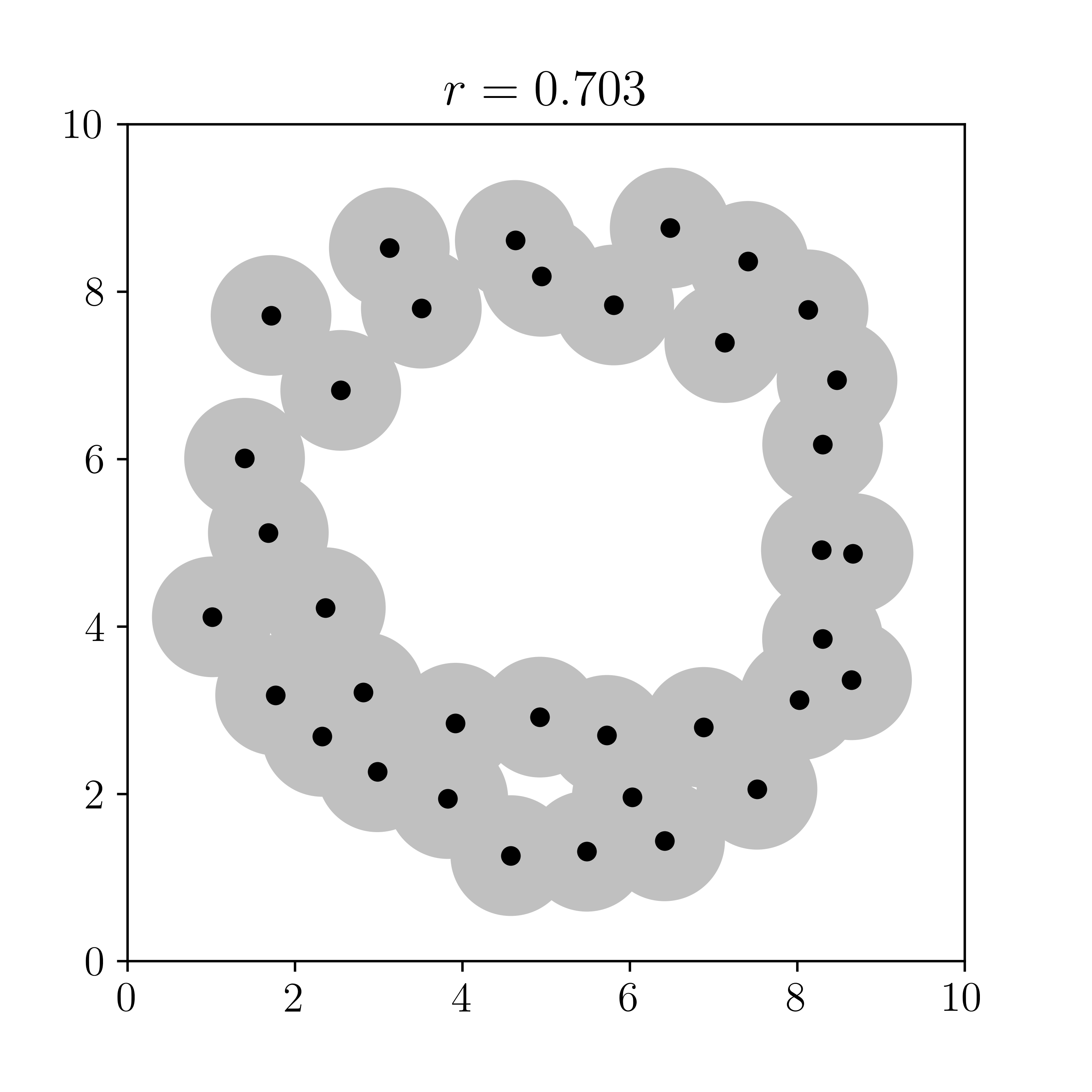}

\includegraphics[width=0.22\linewidth]{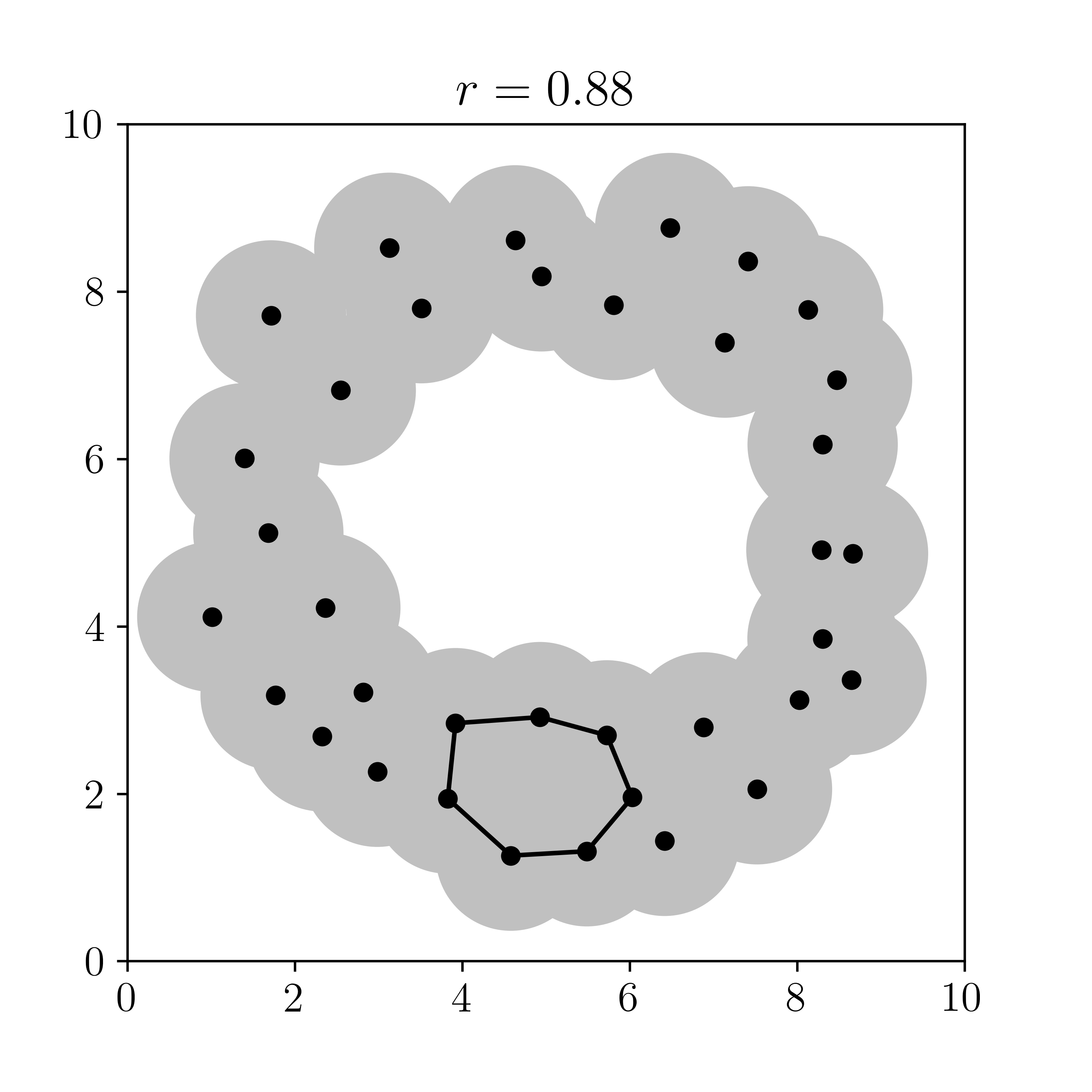}
\includegraphics[width=0.22\linewidth]{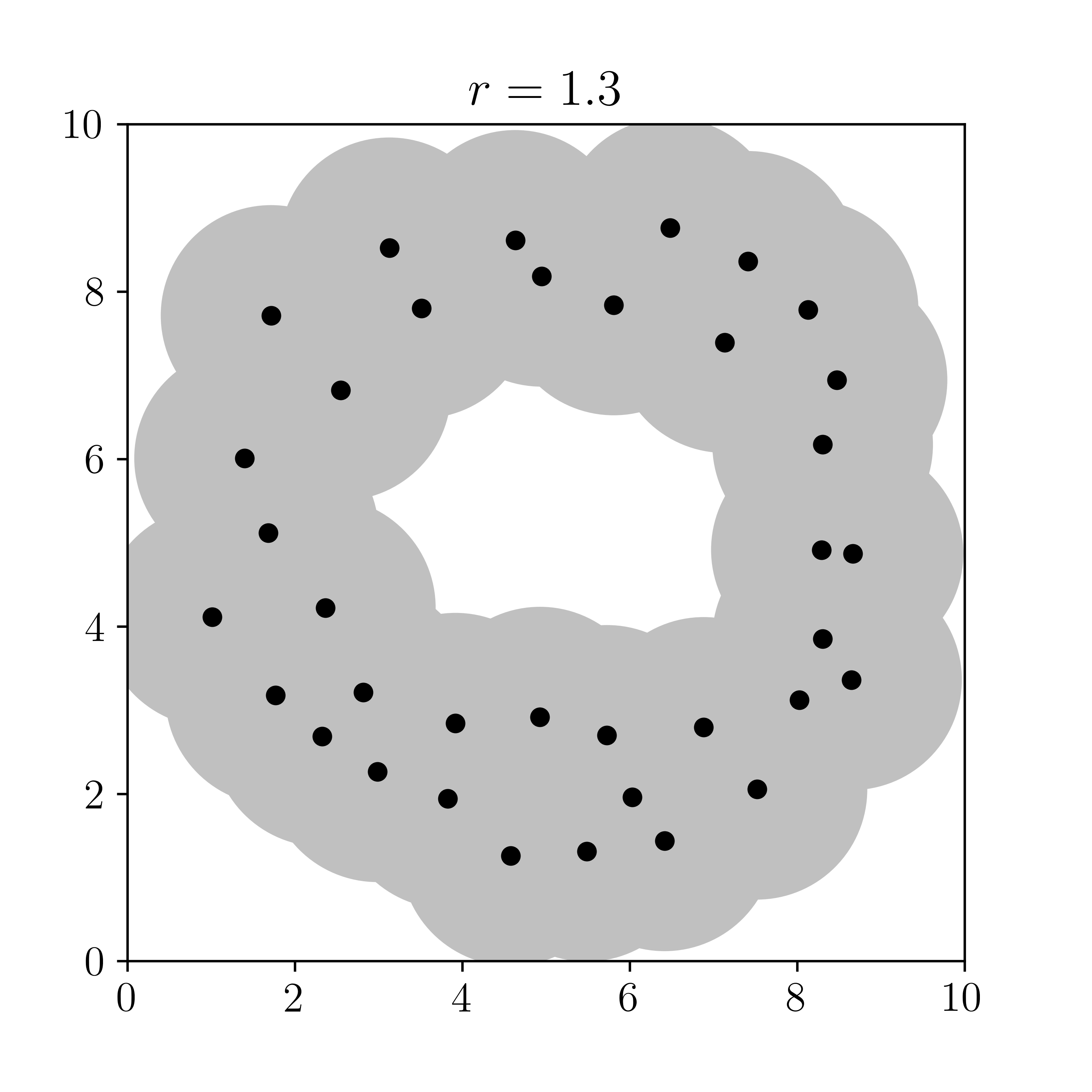}
\includegraphics[width=0.22\linewidth]{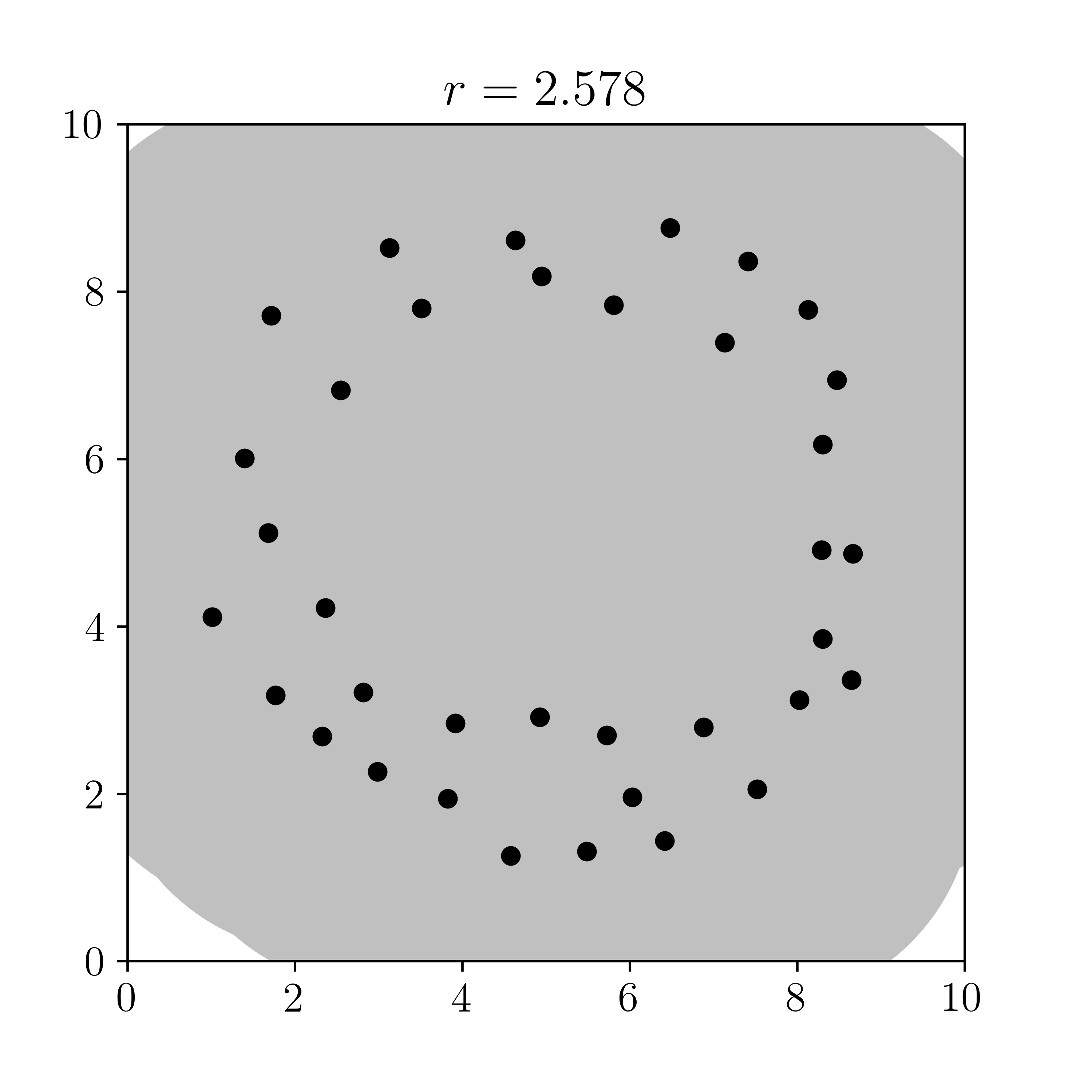}
\includegraphics[width=0.22\linewidth]{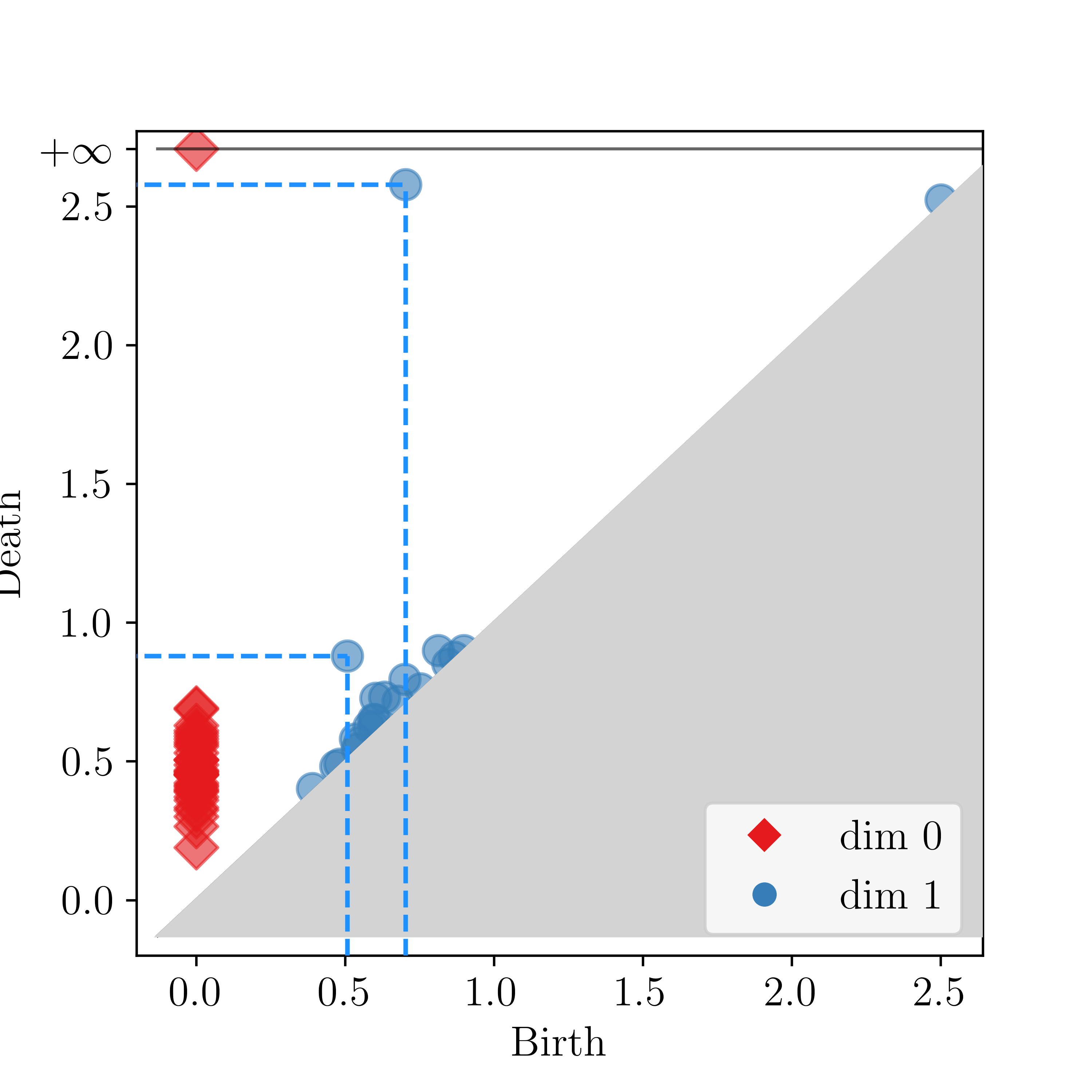}
\caption{The distance filtration and its persistence diagrams. In the first subplot is a sample of points near a circle. Unions of balls centered at these points with different radii are shown the subsequent subplots. The last subplot shows the persistence diagrams of these unions of balls. The red diamond points correspond to the dimension-0 diagram, and the blue circular points correspond to the dimension-1 diagram. The point marked by dashed lines near the diagonal corresponds to the marked polygon in the third subplot and is filled in the fifth subplot. The marked blue point that is far away from the diagonal corresponds to the main loop that is formed in the fourth subplot and is filled in the second last subplot.
}
\label{fig:TDA_illustration_cech}
\end{figure}

\subsubsection{Bell et al's Weighted Filtration}
\label{sec:bell_weighted_filtration}

In \cite{bell19_weighted_persistence}, a weighted filtration, based on the idea of growing balls at custom rates, is proposed.
Given points $\{X_1, ..., X_N\}$ and rates $v_1, ..., v_N > 0$,
$$\hat V_r = \cup_i \bar B(X_i, v_i r),$$
is considered. ($\bar B(x, s)$ denotes the closed metric ball with center $x$ and radius $s$.)
$\hat V_r$ is the sublevel filtration of the function $\hat d_v: \mathbb{R}^D \to \mathbb{R}$ defined by
$$\hat d_v(x) = \min_i d(x, X_i)/v_i.$$

The number of points needs not be finite. Given $E \subseteq \mathbb{R}^D$ and $v: E \to (0, \infty)$, one may consider 
$$V_r = \overline{\cup_{\xi \in E} B(\xi, v(\xi)r)}.$$
Under mild assumptions, e.g. when there exist constants $v_1, v_2$ such that $0 < v_1 \leq v(\xi) \leq v_2 < \infty$, $V_r$ is the sublevel filtration of
\begin{equation}\label{eqn:bell_weighted_filtration_oracle}
d_v(x) = \inf_{\xi \in E} d(x, \xi)/v(\xi).
\end{equation}

The paper \cite{bell19_weighted_persistence} establishes various combinatorial properties of the weighted filtration, which will not be needed in the present work, as the proposed filtration is implemented as a cubical filtration.

In the application presented in \cite{bell19_weighted_persistence}, the rates are chosen to be the pixel intensities of an image, and applications to noisy datasets are alluded to. In the present work, we will specialize to the case when $v$ is a function of the density from which the sample points $X_1, ..., X_N$ are drawn. However, the direct adaptation struggles with low-density regions, and hence is not robust against noise and outliers, as we will see in the simulations in \cref{sec:properties_robustness}. 
To develop a more nuanced approach for handling low-density regions, we borrow the idea of distance-to-measure.

\subsubsection{Distance-to-Measure (DTM) Filtration and Robustness}
\label{sec:DTM}

The distance-to-measure (DTM) function is a modification of the distance function that is designed to enhance robustness against potential noise and outliers.
Roughly speaking, the distance-to-measure of a point $x$ to a probability measure $\mu$ is the average distance of $x$ from the nearest part of the support that carries sufficient mass. As opposed to the distance to the support of $\mu$, it is not estimated by a minimum. Rather, it is averaged over a positive mass, and hence it is more robust. The distance-to-measure filtration is the sublevel filtration of the distance-to-measure function.

Formally, the distance-to-measure function of a probability measure $\mu$ on $\mathbb{R}^D$, with parameter $0 < m < 1$, is defined by
\begin{equation}\label{eqn:DTM_oracle}
DTM(x)
= DTM(x; \mu, m)
= \sqrt{\frac{1}{m} \int_0^{m} G_x^{-1}(q)^2 dq},
\end{equation}
where $G_x^{-1}$ is the generalized inverse function of
\begin{equation}\label{eqn:DTM_quantile}
G_x(r) = \mu[\bar B(x, r)].
\end{equation}
$G_x^{-1}(q)$ may be seen as the $q^{th}$ quantile of the distribution of $d(x, X)$, where $X$ is a random vector with distribution $\mu$.

When the probability measure takes the form $\hat \mu = \frac{1}{N} \sum_{i = 1}^N \delta_{X_i}$, where $\delta_x$ denotes the dirac delta measure at $x$, and $m = k_\text{DTM}/N$ for some positive integer $k_\text{DTM}$ smaller than $N$, the function takes the form
\begin{equation}\label{eqn:DTM_empirical}
DTM(x; \hat \mu, k_\text{DTM}/N)
= \sqrt{\frac{1}{k_\text{DTM}} \sum_{i=1}^{k_\text{DTM}} d(x, X_{(i)})^2},
\end{equation}
where $X_{(i)}$ is the $i^{th}$ nearest neighbor to $x$ among $X_1, ..., X_N$.

When $k_\text{DTM} = 1$, we 
recover the distance function 
to the set $\{X_1, ..., X_N\}$
as a special case. The distance-to-measure function is more robust against noise and outliers than the plain distance function because it takes into account not just \emph{the} nearest neighbor, which may be an outlier, but additional points as well.

In the present work, this idea will be used to enhance the robustness of the weighted filtration.

\subsection{Density Estimation by Nearest Neighbors}
\label{sec:review_density_estimation}

For a filtration $(Y_t)$ defined in terms of a density function $f$, we must first estimate $f$ before we can estimate the persistence homology of $(Y_t)$ from a dataset drawn from $f$. Common density estimation methods include the kernel method and the nearest neighbor method. The latter is used in the proposed filtration because it adapts the amount of smoothing to the local density. We refer the reader to \cite{silverman_density} for the theory of density estimation, and \cite{biau_devroye_nearest_neigbhor} for the nearest neighbor approach.

The nearest-neighbor density estimate
based on a sample $\{X_1, ..., X_N\} \subseteq \mathbb{R}^D$ is defined by
\begin{equation}
\label{eqn:NNDE}
\hat f_k(x) = \hat f_k(x; X_1, ..., X_N, k) = \frac{k}{N} \frac{1}{\omega_D d_k(x)^D},
\end{equation}
where
$\omega_D$ is the volume of the unit ball in $\mathbb{R}^D$ and 
$d_k(x)$ is the distance from $x$ to the $k^{th}$ nearest neighbor of $x$ among the points $X_1$, ..., $X_N$, i.e. 
$$d_k(x) = d(x, X_{i_k}), \text{ where}$$
$$d(x, X_{i_1}) \leq d(x, X_{i_2}) \leq ... \leq d(x, X_{i_N}).$$

The estimate \cref{eqn:NNDE} is motivated by the local approximation
\begin{align*}
\frac{k}{N}
\approx \int_{B(x, d_k(x))} f(y) dy
\approx f(x) \lvert B(x, d_k(x)) \rvert = f(x) \omega_D d_k(x)^D,
\end{align*}
where
$\lvert \cdot \rvert$ denotes the Lebesgue measure, and $f$ is the true density of the independent and identically distributed sample $\{X_1, ..., X_N\}$.

The first approximation 
applies when $N$ is large, since the empirical measure of the ball approximates its true measure. The second approximation applies when $f$ is smooth enough near $x$ and when $d_k(x)$ is small.

\section{Proposed Filtration}
\label{sec:proposed_filtration}

We define the proposed filtration and its estimator in this section.

\begin{definition}[Population RDAD]\label{def:RDAD_population}
Let $f$ be a density
on
$\mathbb{R}^D$ and $P$ be the measure induced by $f$. 
Let $m \in (0, 1)$. The population robust density-aware distance function $RDAD$ is defined by
\begin{align}\label{eqn:RDAD_oracle}
RDAD(x) &= RDAD(x; f, m)\notag
\\&= \sqrt{\frac{1}{m} \int_0^{m} F_x^{-1}(q)^2 dq},    
\end{align}
where
\begin{equation}\label{eqn:RDAD_quantile}
F_x(r) = P \left[
f(X)^{1/D}
d(X, x) \leq r \right].
\end{equation}
\end{definition}

For the sake of comparison, the Density-Aware Distance ($DAD$) function, which has been shown to be not robust against noise and outliers in \cite{hickok22_density_scaled_filtration}, is defined as follows.

\begin{definition}[Population DAD]
Let $f$ be a density
on
$\mathbb{R}^D$ and $P$ be the measure on $\mathbb{R}^D$ induced by $f$. 
The population density-aware distance function $DAD$ is defined by
\begin{equation}\label{eqn:weighted_distance_oracle}
DAD(x) = \text{ess-} \inf_{y \in \mathbb{R}^D} d(x, y) f(y)^{1/D},
\end{equation}
where $\text{ess-} \inf$ denotes the essential infimum with respect to the measure $P$.
\end{definition}

One may check that the DAD function is Bell et al's weighted filtration with $1/f(x)^{1/D}$ as the growth rates of metric balls, and RDAD is the DAD function made robust using the DTM idea. Intuitively, weighting by the density slows down ball growth in high-density regions, so that persistences of homology classes of those regions are prolonged. We make this idea precise in \cref{sec:properties_small_dense_holes}.

We now introduce an empirical version of the RDAD function and the DAD function that are defined in terms of a sample $\{X_1, ..., X_N\} \subseteq \mathbb{R}^D$. The empirical RDAD is defined by combining the distance-to-measure function \cref{eqn:DTM_empirical} for the discrete measure $\frac{1}{N} \sum_{i=1}^N \delta_{X_i}$ in \cref{sec:DTM} and the nearest neighbor density estimator. This requires two parameters, $k_\text{DTM}$ and $k_\text{den}$. The former is needed to set $m = k_\text{DTM}/N$, and the latter is the number of nearest neighbors used for density estimation. One gets the empirical DAD function by putting $k_\text{DTM} = 1$.

Let $X_1, ..., X_N \in \mathbb{R}^D$, and for each $i = 1, ..., N$, let $d_i$ be the distance from $X_i$ to its $k_\text{den}^{th}$-nearest neighbor among $X_1, ...., X_N$.

\begin{definition}[Empirical DAD and RDAD]
Let $k_\text{DTM}, k_\text{den}$ be positive integers strictly less than $N$. 
For each $x \in \mathbb{R}^D$, let $d(x, X_{(i)})/d_{(i)}$ be the $i^{th}$ order statistic of $d(x, X_1)/d_1$,..., $d(x, X_N)/d_N$. The empirical (robust) density-aware distance functions $\widehat{DAD}$ and $\widehat {RDAD}$ are defined by
\begin{align}
\widehat{DAD}(x)
&= \widehat{DAD}(x; X_1, ..., X_N, D, N, k_\text{den}) \notag
\\&= \min_i d(x, X_{i}) \hat f_{k_\text{den}}(X_{i})^{1/D}
\label{eqn:DAD_empirical}
\\&= \min_i C_{N, k_\text{den}, D} d(x, X_{i})/d_{i} \notag
\\\widehat {RDAD}(x) 
&= \widehat{RDAD}(x; X_1, ..., X_N, D, N, k_\text{den}, k_\text{DTM}) \notag
\\&= C_{N, k_\text{den}, D}\sqrt{\frac{1}{k_\text{DTM}}\sum_{i =1}^{k_\text{DTM}} (d(x, X_{(i)})/d_{(i)})^2}
\label{eqn:RDAD_empirical}
\end{align}
where $C_{N,k_\text{den},D} = \left( \frac{1}{\omega_D} \frac{k_\text{den}}{N} \right)^{1/D}$
and $\omega_D$ is the volume of the unit ball in $\mathbb{R}^D$.
\end{definition}

\section{Properties}
\label{sec:properties}

In this section, we present certain desirable properties of the proposed filtration and substantiate our claims in the introduction. In \cref{sec:properties_small_dense_holes}, we discuss how the proposed filtration prolongs persistences of homology classes of high-density regions. Then we discuss, in \cref{sec:properties_scale_invariance}, the proposed filtration's scale invariance, which motivates the awkward-looking exponent $1/D$ in the definition of the RDAD function, and in \cref{sec:properties_robustness}, its robustness, which is enhanced by the DTM setup. We conclude by giving further mathematical properties of the proposed filtration in \cref{sec:properties_further}. All proofs are delayed to \cref{sec:proofs}.

\subsection{Prolonged Persistence for High-Density Regions}
\label{sec:properties_small_dense_holes}

In this subsection, we illustrate how the proposed filtration prolongs persistences of homology classes of high-density regions with a numerical example, and we formalize the observations from the example with theorems. For the numerical examples in this and subsequent subsections, parameters are summarized in \cref{tab:bisquare_variables_setup} in \cref{sec:simulation_parameters}, and implementation details are deferred to \cref{sec:simulations_implementation}.

\subsubsection{Example}

\begin{figure}[h]
\centering
\includegraphics[width = 0.45\linewidth]{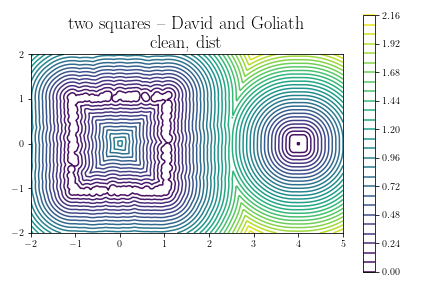}
\includegraphics[width = 0.45\linewidth]{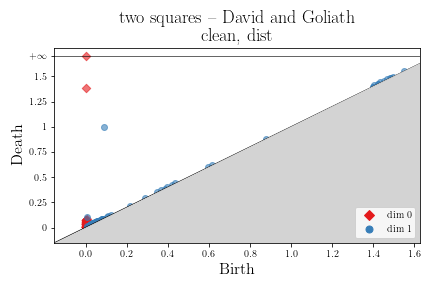}

\includegraphics[width = 0.45\linewidth]{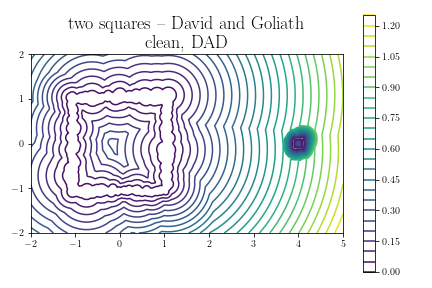}
\includegraphics[width = 0.45\linewidth]{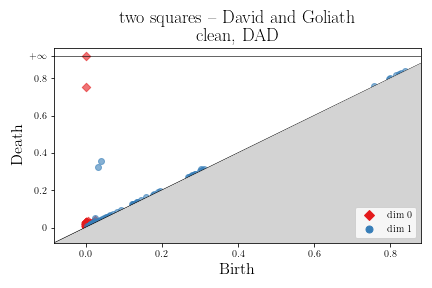}

\caption{Contour plots and persistence diagrams of different filtrations for the ``David and Goliath" two-square dataset.}
\label{fig:bisquareShadowed_clean_filtration_persistence}
\end{figure}

Recall the two-square dataset ``David and Goliath" in the right subplot of \cref{fig:TDA_illustration_small_features} in the introduction. 100 points are uniformly sampled from the bigger square annulus, and 400 from the smaller annulus. Since the dataset has no additive noise or outliers, we compare the distance filtration and the DAD filtration for this dataset. Their contour plots and persistence diagrams are shown in  \cref{fig:bisquareShadowed_clean_filtration_persistence}. The plots and the diagrams for DTM and RDAD are similar to the distance and DAD figures correspondingly. 

Two squares are clearly visible in the scatter plot in the right subplot of \cref{fig:TDA_illustration_small_features}. However, the blue point corresponding to the smaller square  in the persistence diagram of the distance filtration is very close to the diagonal (it is at the tip of the cluster of red diamonds near the origin). On the other hand, for the DAD filtration, two blue circular points in the persistence diagrams are comfortably far away from the diagonal. The contour plot of the DAD function explains this: the dense contour lines inside the small square, due to the higher density on the smaller annulus, show the smaller square hole does not get filled over a wide range of levels. This shows that the proposed choice of weights prolongs the persistence of homology classes of high-density regions.

\subsubsection{Setup and Assumptions}

We need the following setup for formal statements.

\begin{itemize}
\item $f: \mathbb{R}^D \to [0, \infty)$ is a bounded probability density function, and $P$ is the probability measure induced by $f$.
\item $0 \leq t_0 \leq ... \leq t_k \leq \|f\|_\infty$ and for each $i$, $\Omega_i$ is a connected component of $f^{-1} [t_i, \infty)$. The $\Omega_i$'s are pairwise disjoint.
\item $s_i = \min_{j \neq i} \frac{d(\Omega_i, \Omega_j)}{t_i^{-1/D} + t_j^{-1/D}}$.
\end{itemize}

For the two-square example above, the points are sampled from a piecewise constant density supported on the two square annuli, which we may take to be $\Omega_1$ and $\Omega_2$.

We also need the following mass concentration conditions.
\begin{description}
\item[Global Condition] $P(\mathbb{R}^D - \cup \Omega_i) \leq m/2$.
\item[Local Condition] 
There exist $a > 1$ and $\rho_m > 0$ such that for every $x \in \cup \Omega_i$,
\begin{equation}\label{eqn:local_mass_concentration}
P(B(x, \rho_m) \cap f^{-1}[0, a f(x)]) \geq m.
\end{equation}
\end{description}

It is not difficult to check that, for an $L$-Lipscthiz density $f$, if
$m$ is much smaller than $\left( \frac{t_0}{\max(L, 1)}\right)^{1/D}$, the local condition is satisfied for any $a$ when one set
$$\rho_m \sim \left(\frac{m}{\omega_D t_0}\right)^{1/D} \sim m^{1/D}.$$

Finally, we also assume $f \mid \partial \Omega_i = t_i$ for each $i$. Note that this condition and the local mass concentration condition above both concern pointwise function values of the density function $f$, which is only defined up to a set of measure zero. We adopt the convention that these conditions hold as long as they both hold for a function that is almost everywhere equal to $f$.

\subsubsection{Results}

All results of this section assume the setup laid out above.

We first show that the RDAD function is similar to the DAD function corresponding to a piecewise constant approximation of the density $f$.
Let
\begin{align*}
g_i(x) &= t_i^{1/D} d(x, \Omega_i), \quad i = 1, ..., k\\
g(x) &= \min_{i = 1, ..., k} g_i(x).
\end{align*}
In the example, since the density is piecewise constant, $g$ is precisely $DAD$.
Note that a sublevel set of $g_i$ is a neighborhood of $\Omega_i$ consisting of $\Omega_i$ and a surrounding band whose width is proportional to $1/t_i^{1/D}$. Hence, if $t_i$ is large, then persistences of homology classes in the filtration $g_i$ are prolonged by a factor of $t_i^{1/D}$. This remains true for $g$ as long as the filtration level is smaller than $s_i/t_i^{1/D}$, because the sublevel set of $g$ of each level is the disjoint union of the sublevel sets of the $g_i$'s at the level.

\begin{theorem}\label{thm:RDAD_interleaving}
For every $x \in \mathbb{R}^D$,
\begin{equation*}
\frac{1}{\sqrt{2}} g(x) \leq RDAD(x; f, m) \leq a^{1/D} g(x) + O(\rho_m),
\end{equation*}
where the big-Oh constant depends only only on $\|f\|_\infty, t_0, D, a$.
\end{theorem}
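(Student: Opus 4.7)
The plan is to prove the two inequalities separately via quantile estimates of the random variable $Y = f(X)^{1/D} d(X, x)$, whose CDF is $F_x$, and then feed these bounds into the integral defining $RDAD$.

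For the lower bound, the key observation is that on the event $\{X \in \cup_i \Omega_i\}$, we have $f(X)^{1/D} \geq t_i^{1/D}$ and $d(X, x) \geq d(x, \Omega_i)$ whenever $X \in \Omega_i$, so $Y \geq g_i(x) \geq g(x)$. Combined with the global mass concentration condition $P(\mathbb{R}^D - \cup_i \Omega_i) \leq m/2$, this yields $F_x(r) \leq m/2$ for every $r < g(x)$, and hence $F_x^{-1}(q) \geq g(x)$ for all $q \in (m/2, m]$. Bounding the integrand only on the upper half of the integration range then gives
\begin{equation*}
\frac{1}{m}\int_0^m F_x^{-1}(q)^2 \, dq \;\geq\; \frac{1}{m}\int_{m/2}^m g(x)^2\, dq \;=\; \frac{g(x)^2}{2},
\end{equation*}
which is the claimed lower bound after taking a square root.

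For the upper bound, I would pick an index $i^*$ attaining the minimum in the definition of $g(x)$, and choose $y^* \in \partial \Omega_{i^*}$ with $d(x, y^*) = d(x, \Omega_{i^*})$ (here the convention $f \mid \partial \Omega_i = t_i$ lets us use $f(y^*) = t_{i^*}$). Applying the local mass concentration condition at $y^*$ yields a set $S \subseteq B(y^*, \rho_m) \cap f^{-1}[0, a\, t_{i^*}]$ with $P(S) \geq m$. On $S$, the triangle inequality gives
\begin{equation*}
f(z)^{1/D} d(z, x) \;\leq\; (a\, t_{i^*})^{1/D} (d(x, y^*) + \rho_m) \;\leq\; a^{1/D} g(x) + a^{1/D}\|f\|_\infty^{1/D} \rho_m \;=:\; r^*.
\end{equation*}
This forces $F_x(r^*) \geq m$, so $F_x^{-1}(q) \leq r^*$ for all $q \in (0, m]$, and the $RDAD$ integral is bounded above by $r^*$, which is $a^{1/D} g(x) + O(\rho_m)$ with the big-Oh constant depending only on $\|f\|_\infty$, $D$, and $a$ (with the allowance to additionally depend on $t_0$ as stated).

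The two bounds are essentially symmetric in spirit: both reduce to showing that a certain level set of $Y$ has mass on one side of $m$, and both ultimately exploit the geometric fact that $y^*$ witnesses $g(x)$ exactly. The main subtlety I anticipate is the upper-bound step of selecting $y^* \in \partial \Omega_{i^*}$ with the correct density value; this requires justifying that the infimum $d(x, \Omega_{i^*})$ is attained at a boundary point where $f = t_{i^*}$ under the paper's convention that pointwise conditions on $f$ are interpreted modulo null sets. Once this is in hand, the rest is bookkeeping with the triangle inequality and the definitions of $F_x^{-1}$ and the quantile-integral form of $RDAD$.
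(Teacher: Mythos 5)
Your argument follows essentially the same route as the paper's: the lower bound comes from the global condition forcing $F_x(r)\leq m/2$ for every $r<g(x)$ and then integrating the quantile function only over $(m/2,m]$, and the upper bound comes from the local condition applied at a nearest point of $\Omega_{i^*}$, which pins $F_x^{-1}(q)\leq a^{1/D}g(x)+O(\rho_m)$ on all of $(0,m]$. (The paper phrases the upper bound through the variational characterization of $RDAD$, integrating against a mass-$m$ measure supported on $B(x_{i},\rho_m)\cap f^{-1}[0,at_i]$, but that is the same estimate in different clothing.) Your lower bound is complete and correct for every $x$, and your constant bookkeeping matches the statement.

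The one genuine gap is in the upper bound when $x$ lies in the interior of some $\Omega_{i^*}$. There $d(x,\Omega_{i^*})=0$, so the only candidate for $y^*$ is $x$ itself, which need not lie on $\partial\Omega_{i^*}$, and the convention $f\mid\partial\Omega_{i^*}=t_{i^*}$ buys you nothing: $f(x)$ may well exceed $t_{i^*}$. The subtlety you flag (attainment of the infimum at a boundary point, modulo null sets) is not the real issue for this case; the problem is simply that the nearest point is interior. The paper handles $x\in\Omega_i$ as a separate, easier case: apply the local condition at $x$ itself to get mass at least $m$ inside $B(x,\rho_m)\cap f^{-1}[0,af(x)]$, on which $f(z)^{1/D}d(z,x)\leq (a\|f\|_\infty)^{1/D}\rho_m=O(\rho_m)$, which suffices because $g(x)=0$ there. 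With that two-line patch your proof is complete; for $x\notin\cup_i\Omega_i$ the nearest point of $\Omega_{i^*}$ does lie on the boundary and your argument goes through as written.
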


\begin{corollary}\label{cor:RDAD_persistence_lower_bound}
Every homology class in the distance filtration of $\Omega_i$ with birth and death times $\beta$ and $\delta$ induces a class in the RDAD filtration with persistence at least $$t_i^{1/D} (\frac{1}{\sqrt{2}}\delta - a^{1/D}\beta) - O(\rho_m)$$
whenever $t_i^{1/D} \delta < s_i$.
\end{corollary}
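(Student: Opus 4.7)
The plan is to chain together three persistence modules: the distance filtration of $\Omega_i$, the sublevel filtration of $g$, and the $RDAD$ filtration. The transfer from the first to the second is a rescaling that exploits the disjointness hypothesis $t_i^{1/D}\delta < s_i$, and the transfer from the second to the third uses the comparison in Theorem \ref{thm:RDAD_interleaving}.

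First I would use the hypothesis $t_i^{1/D}\delta < s_i$ to show that, throughout the relevant range of filtration parameters, sublevel sets of $g$ split as disjoint unions. Since $\{g_j \leq r\}$ is the $r/t_j^{1/D}$-neighborhood of $\Omega_j$, the definition of $s_i$ implies that for every $r < s_i$ and every $j \neq i$, the set $\{g_j \leq r\}$ is disjoint from $\{g_i \leq r\}$. Hence $\{g \leq r\} = \bigsqcup_j \{g_j \leq r\}$ and the homology splits as a direct sum whose $i$-th summand equals the homology of $\{d(\cdot, \Omega_i) \leq r/t_i^{1/D}\}$. Consequently a class with birth $\beta$ and death $\delta$ in the distance filtration of $\Omega_i$ induces a class $\alpha$ in the filtration of $g$ with birth $t_i^{1/D}\beta$ and death $t_i^{1/D}\delta$.

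Next I would transfer $\alpha$ to the $RDAD$ filtration via Theorem \ref{thm:RDAD_interleaving}, which yields the sublevel-set inclusions
$$\{g \leq r\} \subseteq \{RDAD \leq a^{1/D}r + c\}, \qquad \{RDAD \leq r\} \subseteq \{g \leq \sqrt{2}\,r\},$$
where $c = O(\rho_m)$. Let $\phi_r$ and $\psi_r$ be the maps on homology induced by these inclusions; then $\psi \circ \phi$ and $\phi \circ \psi$ equal the natural structure maps of the $g$- and $RDAD$-filtrations, respectively. Pushing $\alpha$ forward via $\phi$ gives a class in $H_*(\{RDAD \leq a^{1/D}t_i^{1/D}\beta + c\})$, hence born no later than $a^{1/D}t_i^{1/D}\beta + c$. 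Suppose this image died at some $r_\ast$ in $RDAD$; then $\psi_{r_\ast}$ would send it to zero in $H_*(\{g \leq \sqrt{2}\,r_\ast\})$, but by the composition identity this equals the image of $\alpha$ under the $g$-filtration structure map, which is nonzero whenever the target level is strictly less than $t_i^{1/D}\delta$. This forces $\sqrt{2}\,r_\ast \geq t_i^{1/D}\delta$, i.e., $r_\ast \geq t_i^{1/D}\delta/\sqrt{2}$, and subtracting the birth and death bounds produces the claimed persistence of at least $t_i^{1/D}(\delta/\sqrt{2} - a^{1/D}\beta) - O(\rho_m)$.

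The main obstacle I anticipate is this last step: the comparison between $g$ and $RDAD$ afforded by Theorem \ref{thm:RDAD_interleaving} is multiplicative-plus-additive rather than purely additive, so one cannot directly invoke the standard bottleneck stability theorem and read off an interleaving distance. The cycle-tracking argument above sidesteps this issue because it only needs a lower bound on the persistence of one particular class rather than a bijective comparison of entire diagrams; the two inclusions of sublevel sets and their composition with the $g$-structure map are all that is required.
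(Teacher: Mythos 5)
Your proof is correct and follows essentially the same route as the paper's: first the splitting of the sublevel sets of $g$ below level $s_i$ (the paper's \cref{lem:separation_Omega}), then the chain of inclusions $g^{-1}[0,t_i^{1/D}\beta]\subseteq RDAD^{-1}[0,\tilde\beta]\subseteq RDAD^{-1}[0,\tilde\delta]\subseteq g^{-1}[0,t_i^{1/D}\delta]$ obtained from \cref{thm:RDAD_interleaving}, read off as a persistence lower bound for the single tracked class. (One cosmetic slip: for $r<s_i$ the sets $\{g_j\le r\}$ with $j\neq i$ need not be pairwise disjoint from one another, so $\{g\le r\}$ is not a disjoint union over all $j$; but only the separation of $\{g_i\le r\}$ from the union of the others is needed, and that is what your argument actually uses.)
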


In particular, if 
the difference in the parentheses in the corollary is positive and $\rho_m$ is small, then the persistence is scaled roughly by a factor of $t_i^{1/D}$.

Returning to the central claim that the RDAD filtration highlights low-density regions surrounded by high-density regions, the above corollary combined with Alexander duality gives the following result at dimension $D-1$.

\begin{corollary}\label{cor:codim_1_features}
Suppose $f$ is $C^1$-smooth and $t_i$ is not a critical value of $f$. Let $M$ be a bounded connected component of the complement of $\Omega_i$ and $r = \max_{x \in M} d(x, \partial M)$. Then $\partial M$ determines a $(D-1)$-dimensional homology class in the RDAD filtration with persistence at least
$$\frac{1}{\sqrt{2}} t_i^{1/D} r - O(\rho_m)$$
whenever $t_i^{1/D} r < s_i$.
\end{corollary}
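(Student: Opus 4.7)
The plan is to exhibit a $(D-1)$-dimensional homology class in the distance filtration of $\Omega_i$ with birth time $\beta=0$ and death time $\delta=r$, and then invoke \cref{cor:RDAD_persistence_lower_bound} with these values.

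To identify the class, note that since $f$ is $C^1$ and $t_i$ is a regular value, $\partial \Omega_i$ is a smooth closed $(D-1)$-submanifold of $\mathbb{R}^D$. Because $M$ is a connected component of the open set $\mathbb{R}^D \setminus \Omega_i$, locally near any point $p \in \partial M$ the set $M$ occupies the entire ``outside'' half of a small neighborhood of $p$; a local analysis then shows that $\partial M$ is a union of connected components of $\partial \Omega_i$ and hence itself a closed $(D-1)$-manifold, carrying a fundamental class $[\partial M] \in H_{D-1}(\Omega_i)$ (with $\mathbb{Z}/2$ coefficients, to sidestep orientability, and, if $\Omega_i$ is unbounded, after first restricting to a large compact neighborhood of $\bar M$). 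By Alexander duality, each bounded component of $\mathbb{R}^D \setminus \Omega_i$ contributes an independent class in $H_{D-1}(\Omega_i)$, so $[\partial M]$ is nontrivial.

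To compute the birth and death times in the distance filtration of $\Omega_i$, note that at parameter $0$ the sublevel set is $\Omega_i$ itself, so $\beta=0$. For any $x \in M$, every line segment from $x$ to a point of $\Omega_i$ must cross $\partial M \subseteq \Omega_i$ first, giving $d(x,\Omega_i)=d(x,\partial M)$; hence the sublevel set $\{d_{\Omega_i}\leq t\}$ contains $\bar M$ exactly when $t\geq r$. Functoriality of Alexander duality under the inclusion-induced maps of the filtration then shows that $[\partial M]$ remains nontrivial for every $t<r$ (the complement of the sublevel set still contains a bounded component, since any $x\in M$ with $d(x,\partial M)>t$ lies in one) and becomes trivial at $t=r$, so $\delta=r$. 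Plugging $\beta=0$ and $\delta=r$ into \cref{cor:RDAD_persistence_lower_bound} turns the hypothesis $t_i^{1/D}\delta<s_i$ into the assumed $t_i^{1/D}r<s_i$ and returns the persistence lower bound $\frac{1}{\sqrt{2}} t_i^{1/D} r - O(\rho_m)$.

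The main obstacle is the lower bound $\delta \geq r$: the upper bound $\delta \leq r$ is immediate from $\bar M \subseteq \{d_{\Omega_i}\leq r\}$, but verifying that the class does not die earlier requires carefully tracking the Alexander-dual class through the inclusion-induced maps of the thickenings and confirming that any unfilled portion of $M$ contributes a bounded component of the complement whose dual pulls back to a nonzero multiple of $[\partial M]$ in the relevant $H_{D-1}$.
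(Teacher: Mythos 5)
Your proof is correct and takes exactly the route the paper intends: the paper omits this proof entirely, remarking only that the corollary ``follows directly from applying Alexander duality to \cref{cor:RDAD_persistence_lower_bound}.'' Your argument supplies precisely the omitted details---identifying the Alexander-dual class of $M$ in the distance filtration of $\Omega_i$ with birth time $\beta = 0$ and death time $\delta = r$, and then substituting into \cref{cor:RDAD_persistence_lower_bound}.
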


In the corollary, $M$ is a low-density region that is surrounded by the high-density region $\Omega_i$. Its boundary induces a $(D-1)$-homology class in the RDAD filtration whose persistence is at least a quantity that scales with a positive power of the density level and $r$, which measures the size of the low-density region $M$. Hence, if the density threshold $t_i$ or the size $r$ of $M$ is big, and the other factor is moderate, the persistence will be long as long as $\rho_m$ is not too big.

In the two-square example, $M$ could be the square inside the small annulus, and $r$ is the half the sidelength of $M$.

\subsection{Scale Invariance}
\label{sec:properties_scale_invariance}

While many other choices of growth rates may prolong the persistences of small features, our specific choice makes the filtration \emph{scale-invariant}. This means 
that uniformly scaling a dataset does not change the persistence diagrams.
In particular, no matter by how much a dataset is shrunk, its topological features still have the original persistences. Precisely, we have the following proposition.

\begin{proposition}[Scale invariance]\label{prop:scale_invariance}
Let $a > 0$ and $b \in \mathbb{R}^D$ be constants.
\begin{description}
\item[Population version:] Let $X$ be a random vector in $\mathbb{R}^D$ with density $f$. Let $\tilde X = aX + b$ and $\tilde f$ the density of $\tilde X$. Then for any $0 < m < 1$,
$$
RDAD(ax + b; \tilde f, m)
= RDAD(x; f, m),
$$
and hence the $RDAD(\; \cdot \;; \tilde f, m)$ and $RDAD(\; \cdot \;; f, m)$ have the same persistence diagrams.
\item[Sample version:] Let $X_1, ..., X_N$ be points in $\mathbb{R}^D$, and let $\tilde X_i = a X_i + b$. Then for any positive integer $k_\text{DTM}$ strictly less than $N$,
\begin{align*}
&\widehat{RDAD}(ax + b; \tilde X_1, ..., \tilde X_N, D, N, k_\text{den}, k_\text{DTM})
\\&\qquad =
\widehat{RDAD}(x; X_1, ..., X_N, D, N, k_\text{den}, k_\text{DTM}),
\end{align*}
and hence the two $\widehat{RDAD}$ filtrations have the same persistence diagrams.
\end{description}
\end{proposition}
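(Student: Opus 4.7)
The plan is to verify that both population and sample versions of $RDAD$ are equal as functions when evaluated at corresponding points $x$ and $ax+b$, after which the equality of persistence diagrams follows from a general principle: the affine homeomorphism $x \mapsto ax+b$ sends each sublevel set of $RDAD(\;\cdot\;;f,m)$ bijectively onto the corresponding sublevel set of $RDAD(\;\cdot\;;\tilde f,m)$, so the two sublevel filtrations are isomorphic as filtered topological spaces and hence have identical persistence diagrams. Thus the real work is a pointwise verification.

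For the population version, I would first apply the standard change-of-variables formula for densities under the affine map $y = ax + b$ to get $\tilde f(y) = a^{-D} f\bigl((y-b)/a\bigr)$, so that $\tilde f(\tilde X) = a^{-D} f(X)$. Substituting into \cref{eqn:RDAD_quantile} gives
\begin{align*}
F_{ax+b}^{\tilde f}(r)
&= \Pr\bigl[\tilde f(\tilde X)^{1/D}\, d(\tilde X, ax+b) \leq r\bigr]\\
&= \Pr\bigl[a^{-1} f(X)^{1/D} \cdot a\, d(X, x) \leq r\bigr]
= F_x^f(r),
\end{align*}
where I used that the affine map scales distances by $a$, and the $a^{-1}$ coming from the $1/D$-th power of the density exactly cancels it. Since the quantile functions $F_x^{-1}$ and $F_{ax+b}^{-1}$ therefore agree, the integrals defining $RDAD(\;\cdot\;;f,m)$ and $RDAD(\;\cdot\;;\tilde f,m)$ in \cref{eqn:RDAD_oracle} coincide at $x$ and $ax+b$ respectively.

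For the sample version, I would let $\tilde d_i$ denote the distance from $\tilde X_i$ to its $k_\text{den}^{th}$ nearest neighbor among $\tilde X_1, \ldots, \tilde X_N$. Since the affine map preserves the ordering of distances and scales them by $a$, one has $\tilde d_i = a\, d_i$ and $d(ax+b, \tilde X_i) = a\, d(x, X_i)$. Therefore the ratios $d(ax+b, \tilde X_i)/\tilde d_i = d(x, X_i)/d_i$ are unchanged, and so are all of their order statistics. Since the prefactor $C_{N, k_\text{den}, D}$ depends only on $N, k_\text{den}, D$, plugging into \cref{eqn:RDAD_empirical} yields the stated equality.

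There is no real obstacle; the only thing to be careful about is correctly tracking the two competing powers of $a$. The precise way the $a^{-D}$ in the Jacobian of the density and the $a^D$ in $d^D$ cancel after taking $1/D$-th powers is, as remarked just before the proposition, exactly the motivation for the exponent $1/D$ in \cref{eqn:RDAD_quantile}; any other exponent would yield a nontrivial power of $a$ and break invariance. The passage from equality of the functions to equality of persistence diagrams is immediate via the homeomorphism argument above, since sublevel-set filtrations of equal (up to a homeomorphism of the domain) functions have the same persistent homology.
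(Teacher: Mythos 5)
Your proposal is correct and follows essentially the same route as the paper: both reduce the population version to showing $\tilde F_{\tilde x}(r) = F_x(r)$ via the change-of-variables identity $\tilde f(ay+b) = a^{-D}f(y)$ (the paper phrases this at the level of the sublevel sets $E_x$ and an integral substitution, you phrase it directly as a statement about the random variable, but the computation is identical), and both handle the sample version by observing that numerator and denominator of $d(x, X_{(i)})/d_{(i)}$ scale by the same factor $a$. The passage to equality of persistence diagrams via the affine homeomorphism of sublevel sets also matches the paper's argument.
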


Theorem 5 of \cite{hickok22_density_scaled_filtration} is a conformal analogue of this result.

We illustrate the scale invariance property with the ``Antman" example in \cref{fig:bisquare2D_clean_samplePoints_persistence}. The same number of points are sampled randomly from two square annuli, which are scaled versions of each other. Thus, the two square holes 
give two nearby (overlapping) blue circular points in the persistence diagrams in \cref{fig:bisquare2D_clean_samplePoints_persistence}. 
\begin{figure}[h]
\centering
\includegraphics[width = 0.45\linewidth]{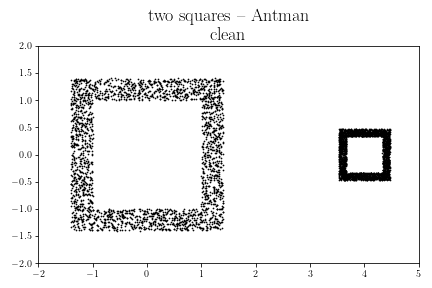}
\includegraphics[width = 0.45\linewidth]{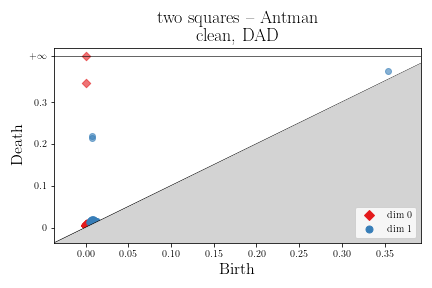}

\caption{Sample points of the ``Antman" two-square dataset, and the persistence diagram of the empirical DAD filtration for this dataset}
\label{fig:bisquare2D_clean_samplePoints_persistence}
\end{figure}

\subsection{Robustness}
\label{sec:properties_robustness}

The RDAD filtration is designed to be robust against additive noise and outliers. Indeed, \cref{thm:robust} below shows that the RDAD function is only mildly perturbed if the perturbation of $f$ is small in both the Wasserstein metric and the sup-norm relative to the parameter $m$. In particular, \cref{cor:robust} shows the perturbation is mild in the presence of additive noise and outliers.

We need the following notations and terminologies.
\begin{description}
\item[Wasserstein distance] By the order-$p$ Wasserstein distance $W_p(f, \tilde f)$ between two densities $f, \tilde f$, we mean the order-$p$ Wasserstein distance between the measures they induce.
\item[Moment] By the $p^{th}$ moment $M_p$ of a random vector $X$ we mean $M_p = E[\abs{X}^p]$. Moments of density functions and measures are defined analogously.
\item[Moderate Tail] A random vector $X$ in $\mathbb{R}^D$ is said to have a moderate tail with parameters $C, \alpha > 0$ if $P(\abs{X} > R) \leq CR^{-\alpha}$. The notion of moderate tails for density functions and measures are defined analogously.
\end{description}
We are now ready to state our stability result.

\begin{theorem}[Stability]\label{thm:robust}
Let $2 < p \leq \infty$, and let $f$ and $\tilde f$ be densities on $\mathbb{R}^D$. Suppose $f$ and $\tilde f$ have finite $q^{th}$ moments for every $q \in [1, \infty)$ and they have moderate tails with parameters $C, \alpha > 0$, where $\alpha \geq D^2 - D$. Suppose further that $f^{1/D}$ is Lipschitz and bounded.
If $2 < r < p$, $W_{p}(f, \tilde f) \leq 1$ and $\|f - \tilde f\|_\infty \leq 1$, then for any $0 < m < 1$ and every compact set $K$, 
$$\|RDAD(\; \cdot \; ; f, m) - RDAD(\; \cdot \; ; \tilde f, m)\|_{L^\infty(K)} =  \frac{1}{m^{1/r}} 
O(W_p(f, \tilde f) + \|f - \tilde f\|_\infty^{\frac{D+r}{(D+1)(r+1)}}),$$
where the big-Oh constant depends only on 
$f, D, K, C, p, r$ and the moments of $f$ and $\tilde f$.
\end{theorem}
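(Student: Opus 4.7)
The plan is to reduce the stability of the $RDAD$ function to a one-dimensional Wasserstein-distance estimate and then bound that via coupling. Let $Y_x = f(X)^{1/D}\, d(X, x)$ with $X \sim P$, and $\tilde Y_x = \tilde f(\tilde X)^{1/D}\, d(\tilde X, x)$ with $\tilde X \sim \tilde P$. Their CDFs are precisely $F_x$ and $\tilde F_x$ from \cref{eqn:RDAD_quantile}, and $RDAD(x; f, m)$ is proportional to the $L^2([0,m])$-norm of $F_x^{-1}$.

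First, I would apply the reverse triangle inequality for $L^2$-norms to get
\[ |RDAD(x; f, m) - RDAD(x; \tilde f, m)| \leq \left( \tfrac{1}{m} \int_0^m |F_x^{-1}(q) - \tilde F_x^{-1}(q)|^2\, dq \right)^{1/2}, \]
and then use Hölder's inequality with exponents $(r/2, r/(r-2))$ (valid since $r > 2$) to arrive at
\[ |RDAD(x; f, m) - RDAD(x; \tilde f, m)| \leq m^{-1/r}\, W_r(Y_x, \tilde Y_x), \]
where I identify the $1$-D $r$-Wasserstein distance with the $L^r$-distance between quantile functions. The factor $m^{-1/r}$ in the theorem is thus accounted for.

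Second, I would bound $W_r(Y_x, \tilde Y_x)$ by coupling. Pick a near-optimal coupling $(X, \tilde X)$ of $(P, \tilde P)$ for $W_p(f, \tilde f)$ and decompose
\[ |Y_x - \tilde Y_x| \leq f(X)^{1/D} |X - \tilde X| + d(\tilde X, x)\bigl( |f(X)^{1/D} - f(\tilde X)^{1/D}| + |f(\tilde X)^{1/D} - \tilde f(\tilde X)^{1/D}|\bigr). \]
The first summand together with the middle piece of the bracket is controlled, via the boundedness and Lipschitz hypothesis on $f^{1/D}$, by expressions of the form $(\|f^{1/D}\|_\infty + L\, d(\tilde X, x))\, |X - \tilde X|$. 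Applying Hölder with exponents $(p/r, p/(p-r))$ and using that $d(\tilde X, x) \leq |\tilde X| + \sup_{y \in K} |y|$ has all-order finite moments uniformly in $x \in K$, these contributions are $O(W_p(f, \tilde f))$ in $W_r(Y_x, \tilde Y_x)$.

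Third, the remaining piece $d(\tilde X, x)\, |f(\tilde X)^{1/D} - \tilde f(\tilde X)^{1/D}|$ is where $\|f - \tilde f\|_\infty$ enters. A crude bound using the Hölder estimate $|a^{1/D} - b^{1/D}| \leq |a - b|^{1/D}$ would give only the suboptimal exponent $1/D$. To reach the claimed exponent $(D+r)/((D+1)(r+1))$, I would split the domain using both a radius cutoff $R$ (invoking $\tilde X$'s moderate tail) and a density threshold $\eta$ (where $t \mapsto t^{1/D}$ admits the sharper estimate with slope of order $\eta^{1/D - 1}$). Optimizing $R$ and $\eta$ as powers of $\|f - \tilde f\|_\infty$ yields the stated exponent, with the tail-decay hypothesis $\alpha \geq D^2 - D$ being precisely what is needed for the tail contribution not to dominate at the optimum. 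The main obstacle is the bookkeeping in this simultaneous optimization: tracking how constants depending on $\|f\|_\infty$, the Lipschitz constant of $f^{1/D}$, the moderate-tail parameters, and the moments of $f$ and $\tilde f$ combine while remaining uniform in $x \in K$; the preceding reductions are essentially formal.
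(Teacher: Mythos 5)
Your proposal is correct, and from the coupling step onward it reproduces the paper's estimates, but the initial reduction is a genuinely different mechanism. The paper never forms the one-dimensional Wasserstein distance between the laws of $Y_x$ and $\tilde Y_x$. Instead it invokes the variational characterization of RDAD (\cref{lem:variational_characterization}) to write $\sqrt{m}\,RDAD(x_0;f,m)$ as an $L^2$ norm against an optimal mass-$m$ measure $\nu^*$ subordinate to $P$, transports $\nu^*$ along the optimal $W_p$-coupling to a mass-$m$ measure subordinate to $\tilde P$ (\cref{lem:transport_submeasure}), and compares the resulting norms by the triangle inequality; the factor $m^{-1/r}$ then emerges from a H\"{o}lder inequality for subordinate measures (\cref{lem:Holder_submeasure}) with exponent $s=(2^{-1}-r^{-1})^{-1}$. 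Your route --- reverse triangle inequality for quantile functions on $[0,m]$, H\"{o}lder from $L^2$ to $L^r$ producing the same $m^{-1/r}$, the identification of $W_r$ on $\mathbb{R}$ with the $L^r([0,1])$ distance of quantile functions, and a coupling upper bound for $W_r$ --- dispenses with the subordinate-measure machinery and, more substantively, is symmetric in $f$ and $\tilde f$: the paper must run its argument twice, rearranging the second pass so that only the Lipschitz constant of $f^{1/D}$ (and never that of $\tilde f^{1/D}$, which is not assumed Lipschitz) appears, whereas your three-term decomposition already places the Lipschitz increment on $f^{1/D}$ alone and bounds the absolute difference in one pass. After that, the two arguments coincide: your H\"{o}lder step with exponents $(p/r,\,p/(p-r))$ together with the uniform moment bound over $x\in K$ is the paper's treatment of the term $A$ via $u=(r^{-1}-p^{-1})^{-1}$ and \cref{lem:rho_estimate}, and your radius-cutoff/density-threshold optimization for $\|(f^{1/D}-\tilde f^{1/D})\rho\|_{L^r(\tilde P)}$ is exactly \cref{lem:difference_roots_combined} (which additionally isolates a set where $\lvert f-\tilde f\rvert$ exceeds $\eta^{1-1/D}$ and kills it by Markov's inequality; the choices $R=\eta^{-1/D^2}$ and $q=\frac{D(Dr+1)}{(D-1)(D+1)}$ there yield your exponent $\frac{D+r}{(D+1)(r+1)}$, with $\alpha\geq D^2-D$ entering precisely where you say it does).
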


To model additive noise and outliers, let $X, Y^\text{add}$ and $Y^\text{out}$ be random vectors in $\mathbb{R}^D$, with densities $f, g^\text{add}$ and $g^\text{out}$; and let $Z$ be a Bernoulli random variable with success probability $\delta \in [0, 1)$. Let $\varepsilon \in [0, 1)$. Suppose $X, Y^\text{add}, Y^\text{out}$ and $Z$ are independent. We model a corrupted $X$ by considering
$$\tilde X = (1-Z)(X + \varepsilon Y^\text{add}) + Z Y^\text{out}.$$

\begin{corollary}\label{cor:robust}
Suppose $X, Y^\text{add}$ and $Y^\text{out}$ have $q^{th}$ moments for every $q \in [1, \infty)$ and they have moderate tails with parameters $C, \alpha > 0$, where $\alpha \geq D^2 - D$. Suppose $f^{1/D}$ is Lipschitz and bounded, and $g^\text{out}$ is bounded. Then for any $r \in (2, \infty)$,
$$\|RDAD(\; \cdot \; ; f, m) - RDAD(\; \cdot \; ; \tilde f, m)\|_{L^\infty(K)} = \frac{1}{m^{1/r}}O(\delta^{1/(r+1)} + \varepsilon^{\frac{D+r}{(D+1)(r+1)}}),$$
where the big-Oh constant depends only on $f, D, K, C, r$, moments of $f, g^\text{add}$ and $g^\text{out}$, and their sup norms as functions on $\mathbb{R}^D$.
\end{corollary}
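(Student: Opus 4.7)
My plan is to reduce the corollary to \cref{thm:robust} by (i) bounding $W_p(f, \tilde f)$ and $\|f - \tilde f\|_\infty$ in terms of $\varepsilon$ and $\delta$, (ii) verifying that $\tilde f$ inherits the moment and moderate-tail hypotheses uniformly in $(\varepsilon, \delta) \in [0,1)^2$, and (iii) choosing $p = r + 1$ so that the two error contributions combine into the advertised rate. The density of $\tilde X$ decomposes as the mixture $\tilde f = (1-\delta)\,(f * g^\text{add}_\varepsilon) + \delta\, g^\text{out}$, where $g^\text{add}_\varepsilon$ is the density of $\varepsilon Y^\text{add}$. The coupling $(X, \tilde X)$ built into the definition of $\tilde X$ satisfies $\|X - \tilde X\|^p = (1-Z)\varepsilon^p\|Y^\text{add}\|^p + Z\|X - Y^\text{out}\|^p$, so
\[
W_p(f, \tilde f)^p \leq (1-\delta)\varepsilon^p E[\|Y^\text{add}\|^p] + \delta\, E[\|X - Y^\text{out}\|^p] = O(\varepsilon^p + \delta)
\]
by the finite-moment hypotheses, giving $W_p(f, \tilde f) = O(\varepsilon + \delta^{1/p})$.

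For the sup-norm bound, I split $\tilde f - f = (1-\delta)(f * g^\text{add}_\varepsilon - f) + \delta(g^\text{out} - f)$. The second piece is bounded by $\|g^\text{out}\|_\infty + \|f\|_\infty$, both finite since $\|f\|_\infty = \|f^{1/D}\|_\infty^D$. For the convolution term, I factor
\[
|f(x - \varepsilon y) - f(x)| = |f^{1/D}(x - \varepsilon y) - f^{1/D}(x)| \cdot \sum_{k=0}^{D-1} f^{1/D}(x - \varepsilon y)^k\, f^{1/D}(x)^{D-1-k}
\]
and invoke the Lipschitz/boundedness assumptions on $f^{1/D}$ to get $|f(x - \varepsilon y) - f(x)| \leq D\,\|f^{1/D}\|_\infty^{D-1}\, L\, \varepsilon\, |y|$. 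Integrating against $g^\text{add}$ and using $E[|Y^\text{add}|] < \infty$ yields $\|f * g^\text{add}_\varepsilon - f\|_\infty = O(\varepsilon)$, hence $\|f - \tilde f\|_\infty = O(\varepsilon + \delta)$.

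To verify that $\tilde f$ inherits the required moment and moderate-tail conditions with constants uniform in $(\varepsilon, \delta)$, I bound $E[\|\tilde X\|^q]$ by a universal multiple of $E[\|X\|^q] + E[\|Y^\text{add}\|^q] + E[\|Y^\text{out}\|^q]$, and use the union bound $P(\|\tilde X\| > R) \leq P(\|X\| > R/2) + P(\varepsilon\|Y^\text{add}\| > R/2) + P(\|Y^\text{out}\| > R) \leq C' R^{-\alpha}$, with $C'$ independent of $(\varepsilon, \delta)$ because $\varepsilon \leq 1$. Assuming without loss of generality that $\varepsilon, \delta$ are small enough that $W_p \leq 1$ and $\|f - \tilde f\|_\infty \leq 1$ (for larger values the corollary is trivial on the compact set $K$), applying \cref{thm:robust} with $p = r + 1 > 3$ gives $\tfrac{1}{m^{1/r}} O(\varepsilon + \delta^{1/(r+1)} + (\varepsilon + \delta)^{(D+r)/((D+1)(r+1))})$. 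Since $\varepsilon, \delta \leq 1$ and the exponent $(D+r)/((D+1)(r+1))$ lies in $(0,1)$ and strictly exceeds $1/(r+1)$ (as $r > 1$), I have $\varepsilon \leq \varepsilon^{(D+r)/((D+1)(r+1))}$ and $\delta^{(D+r)/((D+1)(r+1))} \leq \delta^{1/(r+1)}$, which collapses to the claim. The main obstacle is bookkeeping: ensuring that the implicit constants in \cref{thm:robust} stay uniform as $(\varepsilon, \delta)$ vary, which is exactly what the tail/moment checks are for; the coupling estimate for $W_p$ and the Lipschitz/convolution estimate for the sup norm are otherwise standard.
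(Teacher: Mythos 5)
Your proposal is correct and follows essentially the same route as the paper: write $\tilde f=(1-\delta)(f*g^{\text{add}}_\varepsilon)+\delta g^{\text{out}}$, bound $\|f-\tilde f\|_\infty=O(\varepsilon+\delta)$ via the Lipschitz property of $f$ inherited from $f^{1/D}$ and bound $W_p(f,\tilde f)=O(\varepsilon+\delta^{1/p})$ via the natural coupling from the definition of $\tilde X$ (the paper writes this coupling out explicitly as $d\Pi(x,y)=(1-\delta)g^{\text{add}}_\varepsilon(y-x)f(x)\,dx\,dy+\delta\,d\Pi^{\text{out}}(x,y)$), then apply \cref{thm:robust} with $p=r+1$. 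Your additional bookkeeping — verifying the uniform inheritance of the moment/tail hypotheses, the WLOG reduction to $W_p\le 1$ and $\|f-\tilde f\|_\infty\le 1$, and the exponent comparisons that collapse the bound to $\delta^{1/(r+1)}+\varepsilon^{(D+r)/((D+1)(r+1))}$ — is correct and merely makes explicit what the paper leaves implicit.
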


The DTM analogue of this theorem is Lemma 4 of \cite{chazal18_DTM_statistics}. Our proof loosely follows the argument there but there are considerable complications owing to the asymmetry of the definition of RDAD.

We illustrate the results above with corrupted versions of  the ``Antman" example in \cref{fig:bisquare2D_noisy_outlier_samplePoints}. We compare the DAD filtration and the RDAD filtration in \cref{fig:bisquare2D_outlier_filtration_persistence,fig:bisquare2D_noisy_filtration_persistence}. The persistence diagrams of RDAD for the corrupted datasets are affected to a lesser extent by the noise and outliers than those of DAD.

\begin{figure}[h]
\centering
\includegraphics[width = 0.45\linewidth]{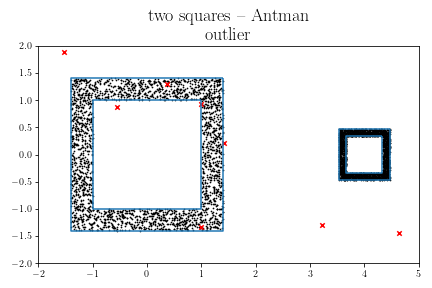}
\includegraphics[width = 0.45\linewidth]{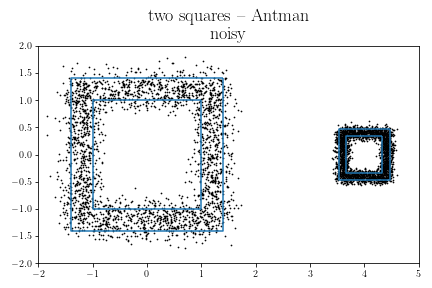}

\caption{Sample points of corrupted ``Antman" two-square datasets (by outliers and by additive noise).}
\label{fig:bisquare2D_noisy_outlier_samplePoints}
\end{figure}

\begin{figure}[h]
\centering
\includegraphics[width = 0.45\linewidth]{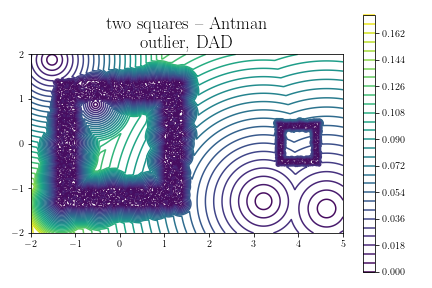}
\includegraphics[width = 0.45\linewidth]{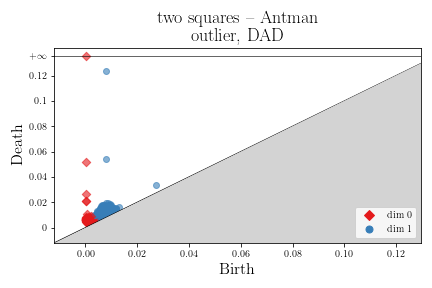}

\includegraphics[width = 0.45\linewidth]{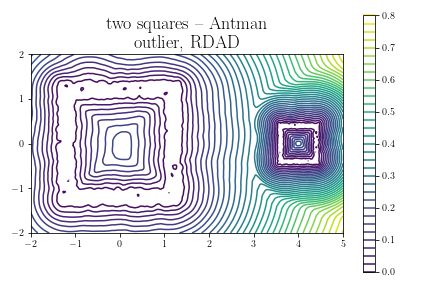}
\includegraphics[width = 0.45\linewidth]{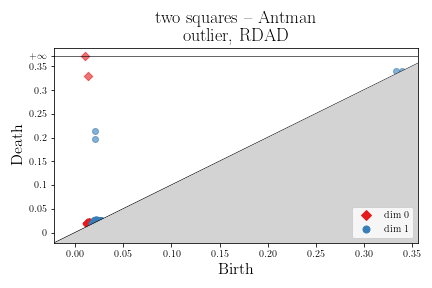}

\caption{Contour plots and persistence diagrams of different filtrations for the outlier-contaminated ``Antman" two-square dataset.}
\label{fig:bisquare2D_outlier_filtration_persistence}
\end{figure}

\begin{figure}[h]
\centering
\includegraphics[width = 0.45\linewidth]{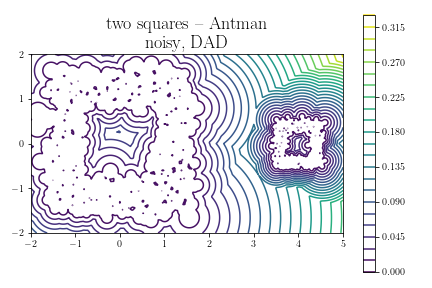}
\includegraphics[width = 0.45\linewidth]{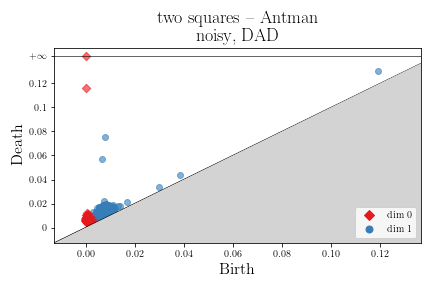}

\includegraphics[width = 0.45\linewidth]{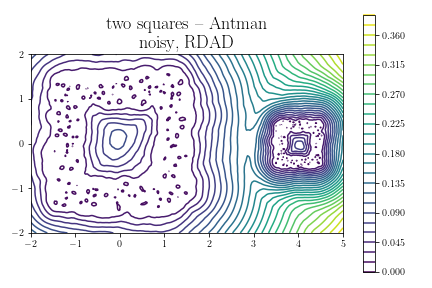}
\includegraphics[width = 0.45\linewidth]{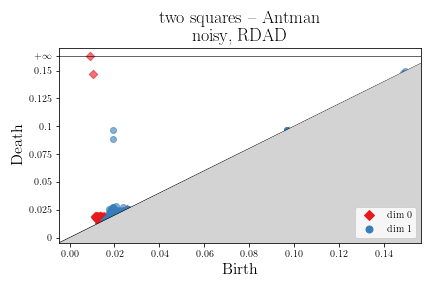}

\caption{Contour plots and persistence diagrams of different filtrations for the additive noise-contaminated ``Antman" two-square dataset.}
\label{fig:bisquare2D_noisy_filtration_persistence}
\end{figure}

\subsection{Further Properties}
\label{sec:properties_further}

We present basic mathematical properties of the proposed filtration in this subsection.

The first result extends Theorem 3.2 in \cite{chazal18_DTM_statistics} and shows RDAD is indeed an approximation of the DAD function.

\begin{proposition}[RDAD as an Approximation of DAD]
\label{prop:RDAD_approximates_DAD}
Given a density $f$, $RDAD(x; f, m) \to DAD(x; f)$ pointwise as $m \to 0$. The convergence is uniform on every compact set $K$ if $f$ is bounded.
\end{proposition}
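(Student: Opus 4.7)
My approach is to exploit the identity $F_x^{-1}(0^+) = DAD(x)$: the right-limit at $0$ of the non-decreasing quantile $F_x^{-1}$ equals $\inf\{r : F_x(r) > 0\}$, which by the definition of essential infimum equals $DAD(x;f)$. Since $F_x^{-1}$ is non-decreasing, one has the sandwich
\begin{equation*}
DAD(x)^2 \;\leq\; RDAD(x;f,m)^2 \;=\; \frac{1}{m}\int_0^m F_x^{-1}(q)^2\,dq \;\leq\; F_x^{-1}(m)^2,
\end{equation*}
and sending $m\to 0^+$, combined with the right-continuity of $F_x^{-1}$ at $0$, yields the pointwise convergence.

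For the uniform statement on a compact $K$ under the hypothesis that $f$ is bounded, the same sandwich reduces the problem to showing that $F_x^{-1}(m)\to DAD(x)$ uniformly for $x \in K$. Unwinding the definition of the quantile function, this is equivalent to a uniform lower bound of the following form: for every $\varepsilon>0$ there exists $m_0>0$ such that $F_x(DAD(x)+\varepsilon)\geq m_0$ for all $x\in K$. I would prove this with two auxiliary facts. First, $DAD$ is $\|f\|_\infty^{1/D}$-Lipschitz on $\mathbb{R}^D$: given $\delta>0$, select a set of positive $P$-measure on which $f(y)^{1/D} d(y,x) \leq DAD(x) + \delta$, apply the triangle inequality to $d(\cdot,\cdot)$, and use boundedness of $f$; symmetrising yields the Lipschitz bound. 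Second, $x\mapsto F_x(r^-)$ is lower semicontinuous: by Fatou's lemma applied to $1\{f(y)^{1/D} d(y,x_n)\leq r\}$, such indicators are eventually $1$ on the open set $\{y : f(y)^{1/D} d(y,x) < r\}$.

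With these facts in hand I would close the argument by contradiction and compactness. If the uniform bound failed, there would exist $\varepsilon>0$ and $x_n\in K$ with $F_{x_n}(DAD(x_n)+\varepsilon)\to 0$; pass to a subsequence with $x_n\to x^* \in K$. Set $r = DAD(x^*) + \varepsilon/2$. By Lipschitz continuity of $DAD$, $DAD(x_n) + \varepsilon \geq r$ for $n$ large, so $F_{x_n}(DAD(x_n)+\varepsilon) \geq F_{x_n}(r^-)$. Fatou then gives $\liminf F_{x_n}(r^-) \geq F_{x^*}(r^-) > 0$, where the strict positivity is because $r > DAD(x^*)$ forces $F_{x^*}(s) > 0$ for some $s \in (DAD(x^*), r)$. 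This contradicts $F_{x_n}(DAD(x_n)+\varepsilon)\to 0$ and completes the argument.

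The main obstacle I anticipate is the delicate interplay between discontinuities of $F_x$ and perturbations of $x$: the map $x\mapsto F_x(r)$ need not be continuous when $r$ is an atom of the distribution of $f(X)^{1/D}d(X,x)$, so a naive continuity argument could fail at exactly the critical thresholds $r = DAD(x) + \varepsilon$. Working with the left-limit $F_x(r^-)$ and a strict inequality $r > DAD(x^*)$ bypasses these jumps. A smaller related subtlety is handling the \emph{essential} (rather than pointwise) infimum in the Lipschitz estimate for $DAD$, which requires positive-$P$-measure witnesses in place of exact minimisers.
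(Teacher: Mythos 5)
Your proposal is correct. The pointwise part is exactly the paper's argument: the sandwich $DAD(x) \leq RDAD(x;f,m) \leq F_x^{-1}(m)$ together with the identification $DAD(x) = \lim_{q\to 0^+}F_x^{-1}(q)$. For the uniform statement, however, you take a genuinely different route. The paper does not touch the quantile function again: it invokes its separately proven Lipschitz bound $\|f\|_\infty^{1/D}$ for $RDAD(\cdot;f,m)$, which is uniform in $m$, concludes that the family $\{RDAD(\cdot;f,m)\}_{m\in(0,1]}$ is equicontinuous, and upgrades pointwise to locally uniform convergence via Arzel\`a--Ascoli; Lipschitz continuity of $DAD$ then falls out as a corollary of the uniform limit. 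You instead prove directly that the upper envelope $F_x^{-1}(m)$ converges to $DAD(x)$ uniformly on $K$, by establishing Lipschitz continuity of $DAD$ from scratch (via positive-measure witnesses for the essential infimum) and lower semicontinuity of $x\mapsto F_x(r^-)$ via Fatou, then closing with a compactness--contradiction argument using the strict positivity $F_{x^*}(r^-)>0$ for $r>DAD(x^*)$. Both arguments use boundedness of $f$ in the same essential place (the Lipschitz constant $\|f\|_\infty^{1/D}$). What your route buys is self-containedness --- it does not depend on the Lipschitz continuity proposition for $RDAD$ --- and a slightly stronger conclusion (uniform convergence of $F_x^{-1}(m)$ itself); what it costs is more measure-theoretic bookkeeping around atoms of $F_x$ and the essential infimum, which the paper's equicontinuity argument sidesteps entirely. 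Your care with $F_x(r^-)$ versus $F_x(r)$ and with the strict inequality $r>DAD(x^*)$ is exactly what makes your version airtight; the only cosmetic slip is calling $\{y: f(y)^{1/D}d(y,x)<r\}$ open, which it need not be, but the Fatou step only uses the pointwise liminf of the indicators, so nothing breaks.
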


The next lemma is useful for establishing results in previous subsections. It extends Proposition 3.3 of \cite{chazal11_DTM}.

\begin{lemma}[Variational Characterization]\label{lem:variational_characterization}
For each $x \in \mathbb{R}^D$,
\begin{equation*}
m \cdot RDAD(x)^2 = \min_{\substack{\nu(\mathbb{R}^D) = m\\\nu \text{ subordinate to } P}} \int [f(\xi)^{1/D} d(\xi, x)]^2 d\nu(\xi),
\end{equation*}
where a measure $\nu$ is said to be \emph{subordinate} to $P$ if $0 \leq \nu(E) \leq P(E)$ for every measurable set $E$. The minimum is attained by definition.
\end{lemma}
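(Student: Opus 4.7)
The plan is to reduce the $D$-dimensional optimization to a one-dimensional rearrangement problem by pushing forward under the map $h(\xi) := f(\xi)^{1/D} d(\xi, x)$. If $\nu$ is a measure on $\mathbb{R}^D$ subordinate to $P$ with total mass $m$, then $\mu := h_* \nu$ is a measure on $[0, \infty)$ of mass $m$ subordinate to $h_* P$, which by definition has CDF $F_x$. The change-of-variables identity $\int h^2 \, d\nu = \int_0^\infty t^2 \, d\mu(t)$ then lets me rewrite the objective, and any lower bound for the one-dimensional problem immediately yields one for the original. Conversely, any candidate $\mu^*$ realized as $h_* \nu^*$ for some admissible $\nu^*$ gives a matching upper bound.

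To solve the one-dimensional problem, set $T := F_x^{-1}(m)$ and $\alpha := m - F_x(T^-) \in [0, F_x(T) - F_x(T^-)]$, and define $\mu^*$ to be the restriction of $h_* P$ to $[0, T)$ plus an atom of mass $\alpha$ at $T$; this $\mu^*$ is subordinate to $h_* P$ with total mass exactly $m$. For any competitor $\mu \leq h_* P$ of total mass $m$, the signed measure $\sigma := \mu - \mu^*$ satisfies $\sigma \leq 0$ on $[0, T)$ (because $\mu^* = h_* P \geq \mu$ there) and $\sigma \geq 0$ on $(T, \infty)$ (because $\mu^* = 0$ there). Hence the positive and negative parts of the Jordan decomposition of $\sigma$ are supported in $[T, \infty)$ and $[0, T]$ respectively, and since $\sigma$ has total mass zero and $t \mapsto t^2$ is nondecreasing on $[0, \infty)$, pairing positive against negative mass across the threshold $T$ yields $\int t^2 \, d\sigma \geq 0$, i.e., $\int t^2 \, d\mu \geq \int t^2 \, d\mu^*$.

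Finally, I would lift $\mu^*$ back to $\mathbb{R}^D$ by setting $\nu^*$ equal to $P$ on $h^{-1}[0, T)$, equal to $\lambda \cdot P$ on $h^{-1}\{T\}$ with $\lambda := \alpha / P(h^{-1}\{T\})$ (understood as zero when the denominator vanishes, in which case $\alpha = 0$), and zero on $h^{-1}(T, \infty)$. Then $\nu^*$ is subordinate to $P$, has total mass $m$, and $h_* \nu^* = \mu^*$, so the infimum is attained. The remaining identity $\int t^2 \, d\mu^* = \int_0^m F_x^{-1}(q)^2 \, dq = m \cdot RDAD(x)^2$ follows from the standard quantile computation, splitting the integral into the contribution $\int_{[0,T)} t^2 \, d(h_* P)(t)$ from below $T$ and the contribution $\alpha T^2$ from the atom. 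The main technical care lies in the bookkeeping around atoms of $F_x$ at $T$, exactly as in the DTM argument of Chazal et al.\ \cite{chazal11_DTM}; the weighting by $f^{1/D}$ introduces no further difficulty once the problem has been pushed forward to one dimension.
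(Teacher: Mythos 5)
Your proposal is correct and follows essentially the same route as the paper: both reduce to a one-dimensional problem by pushing forward along $\xi \mapsto f(\xi)^{1/D} d(\xi, x)$, and both construct the identical optimizer (take all of $P$ on the strict sublevel set of the $m$-quantile and top up with a fractional piece of $P$ on the level set). The only cosmetic difference is that the paper obtains the lower bound by noting that subordination forces $F_P^{-1} \leq F_\nu^{-1}$ and integrating quantile functions over $[0,m]$, whereas you compare an arbitrary competitor directly to the optimizer via the sign pattern of $\mu - \mu^*$; these are two phrasings of the same monotone-rearrangement fact.
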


Lipschitz continuity of RDAD ensures that it can be numerically approximated. Its DTM analogue is Theorem 3.1 of \cite{chazal18_DTM_statistics}.

\begin{proposition}[Lipschitz Continuity]
\label{prop:lipschitz_continuity}
For $2 < p < \infty$, if $f \in L^{1+p/D}(\mathbb{R}^D)$, then $RDAD$ is $[\frac{1}{m^{1/p}} \|f\|_{L^{1+p/D}}^{1/p+1/D}]$-Lipschitz continuous. If $f$ is bounded, then both $DAD$ and $RDAD$ are $\|f\|_\infty^{1/D}$-Lipschitz.
\end{proposition}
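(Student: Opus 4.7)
\emph{Proof plan.} The two assertions are handled separately, but both rest on the same principle: compare the functional at two points by plugging the optimal choice at one point into the definition at the other.

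For the DAD bound, I argue directly from the essential-infimum definition. For any $x,x'\in\mathbb{R}^D$ and $P$-a.e.\ $y$, the reverse triangle inequality gives
\[
\bigl| d(x,y)f(y)^{1/D} - d(x',y)f(y)^{1/D} \bigr| \;\leq\; d(x,x')\,\|f\|_\infty^{1/D},
\]
since $f(y)\leq \|f\|_\infty$ a.e. Because $|\text{ess-}\inf h - \text{ess-}\inf h'|$ is controlled by the $L^\infty(P)$-distance between $h$ and $h'$, the Lipschitz bound $\|f\|_\infty^{1/D}$ for $DAD$ follows immediately.

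For the RDAD bound, I use Lemma~\ref{lem:variational_characterization}. Set $\phi_x(\xi):=f(\xi)^{1/D}d(\xi,x)$, so that $\sqrt{m}\cdot RDAD(x)=\min_\nu \|\phi_x\|_{L^2(\nu)}$, the minimum taken over measures $\nu$ of total mass $m$ subordinate to $P$, and attained by some $\nu^*_x$. Since $\nu^*_x$ is feasible for the minimization problem at $x'$, the triangle inequality in $L^2(\nu^*_x)$ gives
\[
\sqrt{m}\bigl(RDAD(x')-RDAD(x)\bigr)
\;\leq\; \|\phi_{x'}\|_{L^2(\nu^*_x)}-\|\phi_x\|_{L^2(\nu^*_x)}
\;\leq\; \|\phi_{x'}-\phi_x\|_{L^2(\nu^*_x)}.
\]
Because $|\phi_{x'}(\xi)-\phi_x(\xi)| \leq d(x,x')\,f(\xi)^{1/D}$ pointwise, everything reduces to estimating $\|f^{1/D}\|_{L^2(\nu^*_x)}$. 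Swapping the roles of $x$ and $x'$ will then give the matching lower bound.

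The final step is a Hölder computation that exploits $\nu^*_x$ being subordinate to $P$ and having total mass $m$. When $f$ is bounded, $\|f^{1/D}\|_{L^2(\nu^*_x)}\leq \sqrt{m}\,\|f\|_\infty^{1/D}$, from which the $\|f\|_\infty^{1/D}$ Lipschitz constant for $RDAD$ drops out. In general, interpolating $L^2$ between $L^p$ and the finite total mass yields
\[
\|f^{1/D}\|_{L^2(\nu^*_x)}\;\leq\; m^{1/2-1/p}\,\|f^{1/D}\|_{L^p(\nu^*_x)},
\]
and subordination of $\nu^*_x$ to $P$ (with density $f$) gives
\[
\|f^{1/D}\|_{L^p(\nu^*_x)}^p \;=\; \int f^{p/D}\,d\nu^*_x \;\leq\; \int f^{1+p/D}\,dy \;=\; \|f\|_{L^{1+p/D}}^{1+p/D},
\]
so $\|f^{1/D}\|_{L^p(\nu^*_x)}\leq \|f\|_{L^{1+p/D}}^{1/p+1/D}$. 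Substituting and dividing by $\sqrt{m}$ produces exactly the stated constant $m^{-1/p}\|f\|_{L^{1+p/D}}^{1/p+1/D}$. I do not expect a real obstacle here; the only delicate point is tracking the exponents through the interpolation, and existence (not uniqueness) of the minimizer $\nu^*_x$, which is exactly what Lemma~\ref{lem:variational_characterization} supplies.
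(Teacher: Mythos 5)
Your proof is correct and, for the RDAD bounds, follows essentially the same route as the paper: the variational characterization (\cref{lem:variational_characterization}), feasibility of the minimizer $\nu^*_x$ at the other point, the triangle inequality in $L^2(\nu^*_x)$, and then a H\"{o}lder estimate exploiting both the total mass $m$ and subordination to $P$ --- your two-step interpolation-plus-subordination computation is exactly the content of the paper's \cref{lem:Holder_submeasure} applied with $r=2$, and your exponent bookkeeping matches the stated constants. The only place you diverge is the $DAD$ claim: you prove it directly from the essential-infimum definition via the reverse triangle inequality, whereas the paper deduces it from the locally uniform convergence $RDAD \to DAD$ as $m \to 0$ (\cref{prop:RDAD_approximates_DAD}), i.e.\ as a locally uniform limit of $\|f\|_\infty^{1/D}$-Lipschitz functions. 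Your version is more self-contained and arguably cleaner, since it does not require first establishing the convergence result; the paper's version comes for free once that convergence is in hand. Either way the argument is complete.
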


Finally we present a statistical convergence result that extends Theorem 9 of \cite{chazal18_DTM_statistics}.

\begin{proposition}[Functional Normality]\label{prop:functional_consistency}
Suppose $X_1, ..., X_N$ is an independent and identically distributed sample with a density $f$ that is continuous and has a compact support. If $\frac{k_\text{den}}{\log N} \to \infty$, $\frac{k_\text{den}}{N} \to 0$ and $\frac{k_\text{DTM}}{N} \to m$, then on every compact set $K$ in $\mathbb{R}^D$, $\sqrt{N} (\widehat{RDAD}^2 - RDAD^2)$ converges weakly in $L^\infty(K)$ to a centered Gaussian process with covariance kernel
$$\kappa(x, y) = \frac{1}{m^2} \int_0^{(F_x^{-1}(m))^2}\int_0^{(F_y^{-1}(m))^2} \left(P(E_{x, t} \cap E_{y, s}) - F_x(\sqrt{t}) F_y(\sqrt{s})\right) ds dt,$$
where $E_{z, r} = \{\xi: f(\xi)^{1/D} d(z, \xi) \leq \sqrt{r}\}$ for every $z \in \mathbb{R}^D$ and $r > 0$.
\end{proposition}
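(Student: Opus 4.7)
The plan is to reduce the statement to a weak-convergence assertion for an empirical CDF process via a change-of-variables, and then apply the functional delta method after peeling off the error from nearest-neighbor density estimation. Starting from $m \cdot RDAD(x;f,m)^2 = \int_0^m F_x^{-1}(q)^2\, dq$, the substitution $q = F_x(r)$ followed by $r^2 = \int_0^{r^2} dt$ and Fubini gives
\begin{equation*}
m \cdot RDAD(x;f,m)^2 = \int_0^{F_x^{-1}(m)^2} \bigl[m - F_x(\sqrt t)\bigr]\, dt =: \Phi_m(F_x).
\end{equation*}
The same identity applies to $\frac{k_\text{DTM}}{N}\widehat{RDAD}(x)^2 = \Phi_{k_\text{DTM}/N}(\hat F_x)$, where $\hat F_x(r) := \frac{1}{N}\sum_{i=1}^N \mathbf{1}\{\hat f_{k_\text{den}}(X_i)^{1/D} d(X_i,x) \leq r\}$. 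Since $\partial_a \Phi_a(F) = F^{-1}(a)^2$, the replacement $k_\text{DTM}/N \leadsto m$ contributes a deterministic discrepancy that is absorbed under the customary assumption (implicit in such results, e.g.\ Theorem~9 of \cite{chazal18_DTM_statistics}) that $k_\text{DTM}/N - m = o(N^{-1/2})$; thus it suffices to establish a functional CLT for $\Phi_m(\hat F_x) - \Phi_m(F_x)$.

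To strip out the density-estimation error I would introduce the oracle empirical CDF $\hat F_x^\sharp(r) := \frac{1}{N}\sum_i \mathbf{1}\{f(X_i)^{1/D} d(X_i,x) \leq r\}$ and control $\hat F_x - \hat F_x^\sharp$ directly. Under $k_\text{den}/\log N \to \infty$ and $k_\text{den}/N \to 0$, together with continuity of $f$ on its compact support, the nearest-neighbor estimator obeys a uniform-consistency bound $\sup_{\xi \in \mathrm{supp}(f)} |\hat f_{k_\text{den}}(\xi)^{1/D} - f(\xi)^{1/D}| = \eta_N \to 0$ at a rate that must be combined with the uniform modulus of continuity of $F_x$ (supplied by \cref{prop:lipschitz_continuity}) to conclude $\sqrt N \sup_{x \in K,\, r}|\hat F_x(r) - \hat F_x^\sharp(r)| \to 0$ in probability. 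The essence of the argument is that $\hat F_x$ and $\hat F_x^\sharp$ can disagree only on those sample points $X_i$ whose value of $f(X_i)^{1/D}d(X_i,x)$ lies within $\eta_N \, d(X_i,x)$ of $r$, and the mass of such points is controlled by the continuity modulus of $F_x$.

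Once this reduction is in place, the oracle process $\sqrt N(\hat F_x^\sharp(r) - F_x(r))$ is a standard empirical process indexed by $\{\mathbf{1}_{f(\cdot)^{1/D} d(\cdot,x) \leq r} : x \in K,\, r \in [0,R]\}$, which is VC-subgraph (its subgraphs are cut out by finitely many polynomial inequalities) and hence Donsker. It therefore converges weakly in $L^\infty(K \times [0,R])$ to a centered Gaussian process $\mathbb{G}$ with covariance $\mathrm{Cov}(\mathbb{G}(x,r),\mathbb{G}(y,s)) = P(E_{x,r^2} \cap E_{y,s^2}) - F_x(r) F_y(s)$. The functional $\Phi_m$ is Hadamard-differentiable at any strictly increasing $F$: differentiability of the quantile map $F \mapsto F^{-1}(m)$ is the classical Lemma~3.9.23 of van der Vaart--Wellner, and because the integrand $m - F(\sqrt t)$ vanishes at the upper limit $t = F^{-1}(m)^2$, the boundary term drops and the derivative reduces to $d\Phi_{m,F}(h) = -\int_0^{F^{-1}(m)^2} h(\sqrt t)\, dt$. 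The functional delta method then yields
\begin{equation*}
\sqrt N \, m \,\bigl(\widehat{RDAD}^2 - RDAD^2\bigr)(x) \rightsquigarrow -\int_0^{F_x^{-1}(m)^2} \mathbb{G}(x,\sqrt t)\, dt,
\end{equation*}
and Fubini applied to the covariance of the right-hand side reproduces the kernel $\kappa$ up to the $1/m^2$ factor.

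The main technical obstacle is the density-estimation step: making the product of $\eta_N$ with the modulus of continuity of $F_x$ genuinely $o(N^{-1/2})$ uniformly in $x \in K$ and $r$ is delicate, because the nearest-neighbor rate is sensitive to boundary behaviour of $f$ and because $f^{1/D}$ is not smoother than $f$ at points where $f$ is small. If that step is granted, the remainder of the proof is a routine composition of the Donsker theorem, Hadamard differentiability, and the functional delta method, followed by a covariance computation.
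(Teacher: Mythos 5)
Your proposal follows essentially the same route as the paper's proof of \cref{prop:functional_consistency}: both reduce the problem to a functional CLT for the empirical process indexed by the sets $E_{x,r}$, both verify the Donsker/VC property by noting that the classifying functions $f(\cdot)^{2/D}d(x,\cdot)^2 - t$ lie in a finite-dimensional linear span, and both obtain the limit by linearizing the quantile-integral functional. Your Hadamard-differentiability-plus-delta-method step is a repackaging of the paper's explicit decomposition $\sqrt{n}(\widehat{RDAD}^2 - RDAD^2) = A_n + R_n$ (following Theorems 5 and 9 of \cite{chazal18_DTM_statistics}), where the remainder $R_n$ is bounded by $\frac{\sqrt{n}}{m}\abs{S_n}\abs{T_n}$, with the quantile discrepancy $S_n$ controlled via a Lipschitz lemma for $x \mapsto F_x^{-1}(m)$ and the Kolmogorov--Smirnov term $T_n$ controlled by Vapnik--Chervonenkis; the two linearizations produce the same limit and the same covariance kernel. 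Two remarks on the points you flag. First, the density-estimation step that you correctly identify as the main obstacle is handled in the paper only at the $o_P(1)$ level: it invokes uniform consistency $\|\hat f_{k_\text{den}} - f\|_\infty \to 0$ (Theorem 4.2 of \cite{biau_devroye_nearest_neigbhor}) to replace $\hat f_{k_\text{den}}(X_i)^{1/D}$ by $f(X_i)^{1/D}$ inside $\widehat{RDAD}^2$, without quantifying this substitution at the $o_P(N^{-1/2})$ rate that the $\sqrt{N}$-scaling strictly requires, so your caution is warranted but the paper offers no sharper argument that you missed. Second, the paper does not address the discrepancy between $k_\text{DTM}/N$ and $m$ at all; your explicit $o(N^{-1/2})$ assumption on $k_\text{DTM}/N - m$ is an added hypothesis rather than a reconstruction of something in the paper's argument.
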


\section{Bootstrapping}
\label{sec:bootstrapping}

\begin{figure}[h]
\centering

\includegraphics[width = 0.45\linewidth]{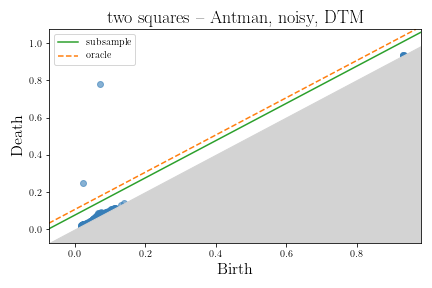}
\includegraphics[width = 0.45\linewidth]{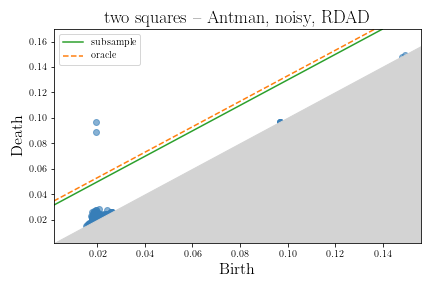}

\caption{Dimension-1 persistence diagrams of different filtration functions for the additive noise-corrupted ``Antman" two-square dataset with confidence bands. Blue points are points in the dimension-1 empirical persistence diagram. The green solid lines and the orange dashed lines are the confidence bands constructed by subsample and oracle bootstrapping respectively.}
\label{fig:bisquare2D_noisy_persistenceBootstrapped}
\end{figure}

In order to distinguish statistically significant topological signals from noise, a confidence band like those in \cite{fasy14_confidence_set,chazal18_DTM_statistics} is desirable. Below, we discuss how to construct such a confidence band, and compare the resultant band with what an ``oracle", who knows the density $f$ from which the points are sampled, would construct. We show that the two bands are empirically similar in the additive noise-corrupted ``Antman'' example. A more complicated example will be considered in \cref{sec:voronoi}.

Let $\hat P$ be the empirical persistence diagram. The confidence set we construct is a bottleneck metric ball centered at $\hat P$.
Even though an ``oracle" who knows the density $f$ would, presumably, know the significant features, we still imagine that they will simply determine an appropriate radius for the ball around the empirical persistence diagram by sampling a certain number of independent samples of the same size from $f$, and approximating, by the empirical quantile, the true $(1-\alpha)$-quantile\footnote{This $\alpha$ denotes the level of significance and is different from the parameter $\alpha$ for moderate tails. The meaning of $\alpha$ should be clear from context.} of the bottleneck distance from $\hat P$ to a persistence diagram from an independent sample from $f$. If the original sample has been corrupted, the oracle samples are corrupted by the same mechanism too.

The density $f$ is not known to a ``non-oracle". In this case, we adapt the subsampling method proposed in \cite{chazal18_DTM_statistics}.
Specifically, we generate $B$ samples ($B = 100$ in our implementation) of $N$ random vectors drawn from $X_1, ..., X_N$ with replacement and compute $B$ persistent diagrams $P^*_1, ..., P^*_B$. The radius of the confidence set is the empirical $(1-\alpha)$-quantile of the bottleneck distances of $\hat P$ from the $P^*_i$'s.

In \cite{fasy14_confidence_set}, each bootstrap sample contains $o(N)$ points. However, since the scale of the proposed filtration changes with the sample size, we fix the bootstrap sample size to be $N$ to ensure comparability of the bootstrap sample and the empirical sample.

Consider, again, the additive noise-corrupted ``Antman'' two-square dataset on the right of \cref{fig:bisquare2D_noisy_outlier_samplePoints}. The persistence diagrams for the distance-to-measure filtration and the RDAD filtration are shown in \cref{fig:bisquare2D_noisy_persistenceBootstrapped} with different confidence bands. Note that in both figures the bands constructed by oracle bootstrapping and by subsample bootstrapping are very close to each other, and both of them identify correctly the two true loops.

\section{Simulations} 
\label{sec:simulations}

We illustrate the utility of the proposed filtration on synthetic and real data including a brief description of the computation of the persistent diagrams of the empirical $\widehat{RDAD}$ filtration.

First, in \cref{sec:voronoi}, motivated by the multiscale nature of the cosmic web, we attempt to identify Voronoi cells with a range of sizes from observed sample points on cell edges.

Then, in \cref{sec:cellularTowers}
we attempt to apply our method to a dataset of cellular towers in the United States to discover mesoscale holes formed by geography and missing data.

Model parameters and sample sizes of different datasets are summarized in the tables in \cref{sec:simulation_parameters}.

\subsection{Implementation}
\label{sec:simulations_implementation}

We approximate the empirical $\widehat{RDAD}$ function by a function that is piecewise constant on a fine grid and coincides with the $\widehat{RDAD}$ function at the center of each grid cell. This produces a cubical filtration. We use the implementation in \cite{gudhi_CubicalComplex}. This computation is feasible for 2- to 3-dimensional data, but we confine ourselves to 2-dimensional data for easier visualization. A byproduct of computing with cubical filtrations is that we can locate the pixel at which each codimension-1 hole is eventually filled.

All homological computations are done with coefficients in $\mathbb{Z}/{11 \mathbb{Z}}$, which is the default field in \cite{gudhi_CubicalComplex}.

\subsection{Recovery of Synthetic Voronoi Cells}
\label{sec:voronoi}

\begin{figure}[h]
\centering
\includegraphics[width = 0.9\linewidth]{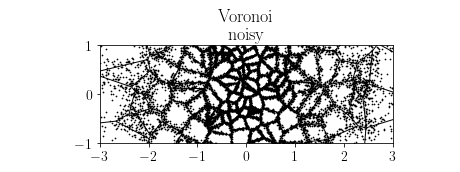}

\caption{Sample points of the noisy Voronoi dataset.}
\label{fig:voronoi_noisy_samplePoints}
\end{figure}

\begin{figure}[h]
\centering
\includegraphics[width = 0.9\linewidth]{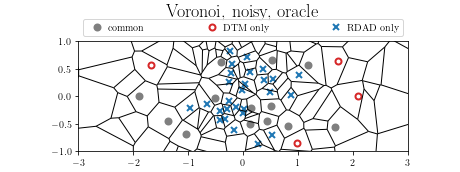}

\includegraphics[width = 0.9\linewidth]{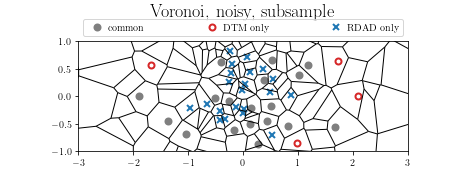}

\caption{Significant loops under different filtrations and different bootstrapping methods for the noisy Voronoi dataset. Significant loops under different filtrations but the same bootstrapping methods share the same plot and are distinguished by their colors and markers.}
\label{fig:voronoi_noisy_loopCmp}
\end{figure}

\begin{figure}[h]
\centering
\includegraphics[width = 0.45\linewidth]{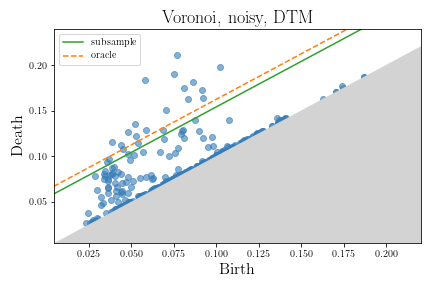}
\includegraphics[width = 0.45\linewidth]{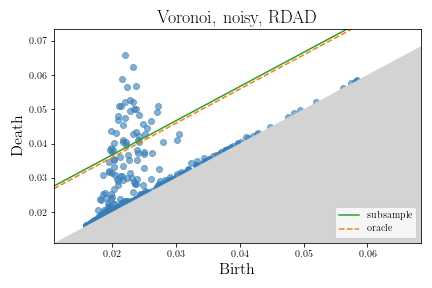}

\caption{Dimension-1 persistence diagrams of different filtration functions for the noisy Voronoi dataset with confidence bands. Blue points are points in the dimension-1 empirical persistence diagram. The green solid lines and the orange dashed lines are the confidence bands constructed by subsample and oracle bootstrapping respectively.}
\label{fig:voronoi_noisy_persistence}
\end{figure}

In this subsection, we attempt to recover the Voronoi cells with the proposed filtration from a sample of points near edges of a planar Voronoi diagram with cells of different sizes and different densities on their edges. This is the same dataset as the one on the left of \cref{fig:TDA_illustration_small_features}, and it is motivated by the cosmological Voronoi model in \cite{icke91_cosmic_void_voronoi} (see also \cite{pranav16_cosmic_void_TDA, xu19_cosmic_void_TDA}), where galactic matter is concentrated on walls and filaments of a Voronoi diagram, whose cell sizes span a wide range of scales, as observed in \cite{aragoncalvo12_cosmic_void_hierarchy, wilding21_cosmic_void_TDA}. 
For easier visualization, we consider only planar Voronoi diagrams.

We experiment with Voronoi diagrams in which the cells in the center of the diagram tend to be smaller. A point is sampled by first choosing a random cell and then choosing a uniform point on its boundary. This results in a higher sampling density on boundaries of smaller cells. We further inject additive noise. Further details of the data generation process may be found in \cref{sec:simulation_parameters}.

We compare the performances of the proposed filtration against that of the distance-to-measure filtration. The sample points are shown in \cref{fig:voronoi_noisy_samplePoints}, the persistence diagrams are shown in \cref{fig:voronoi_noisy_persistence}, and the significant loops found by oracle and subsample bootstrapping are shown in \cref{fig:voronoi_noisy_loopCmp}.

As shown in \cref{fig:voronoi_noisy_loopCmp}, while the proposed method misses some of the bigger cells detected by distance-to-mesaure, with the sizes of different loops normalized by density, it detects many smaller cells in the middle that distance-to-measure cannot detect.

We also note the closeness of the confidence bands of oracle and subsampling bootstrapping in \cref{fig:voronoi_noisy_persistence}. This serves as empirical evidence that subsampling bootstrapping
does not suffer heavy damage from not being able to generate new sample from the true density.

\subsection{Real Data}
\label{sec:cellularTowers}

\begin{figure}[h]
\centering
\includegraphics[width = 0.45\linewidth]{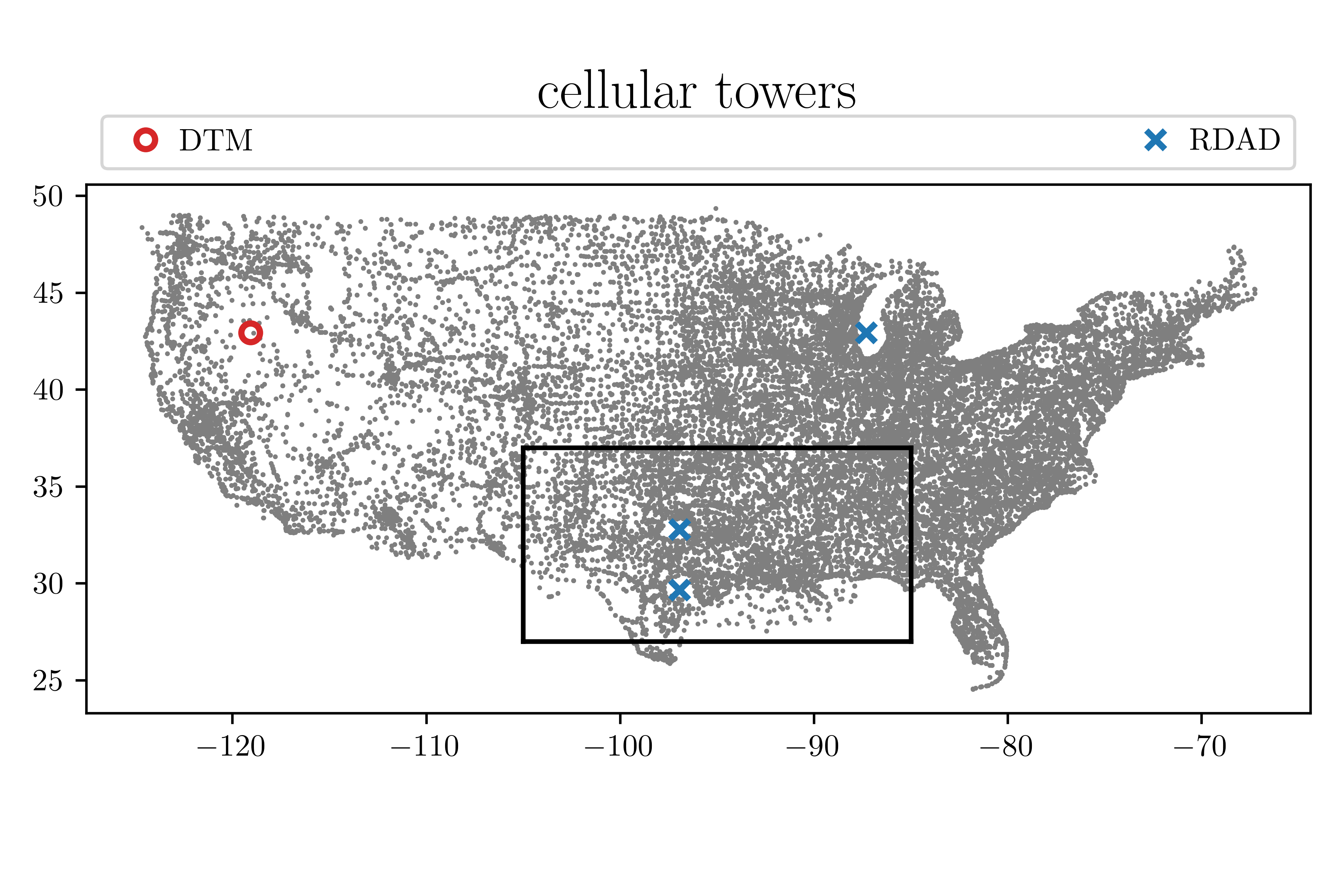}
\includegraphics[width = 0.45\linewidth]{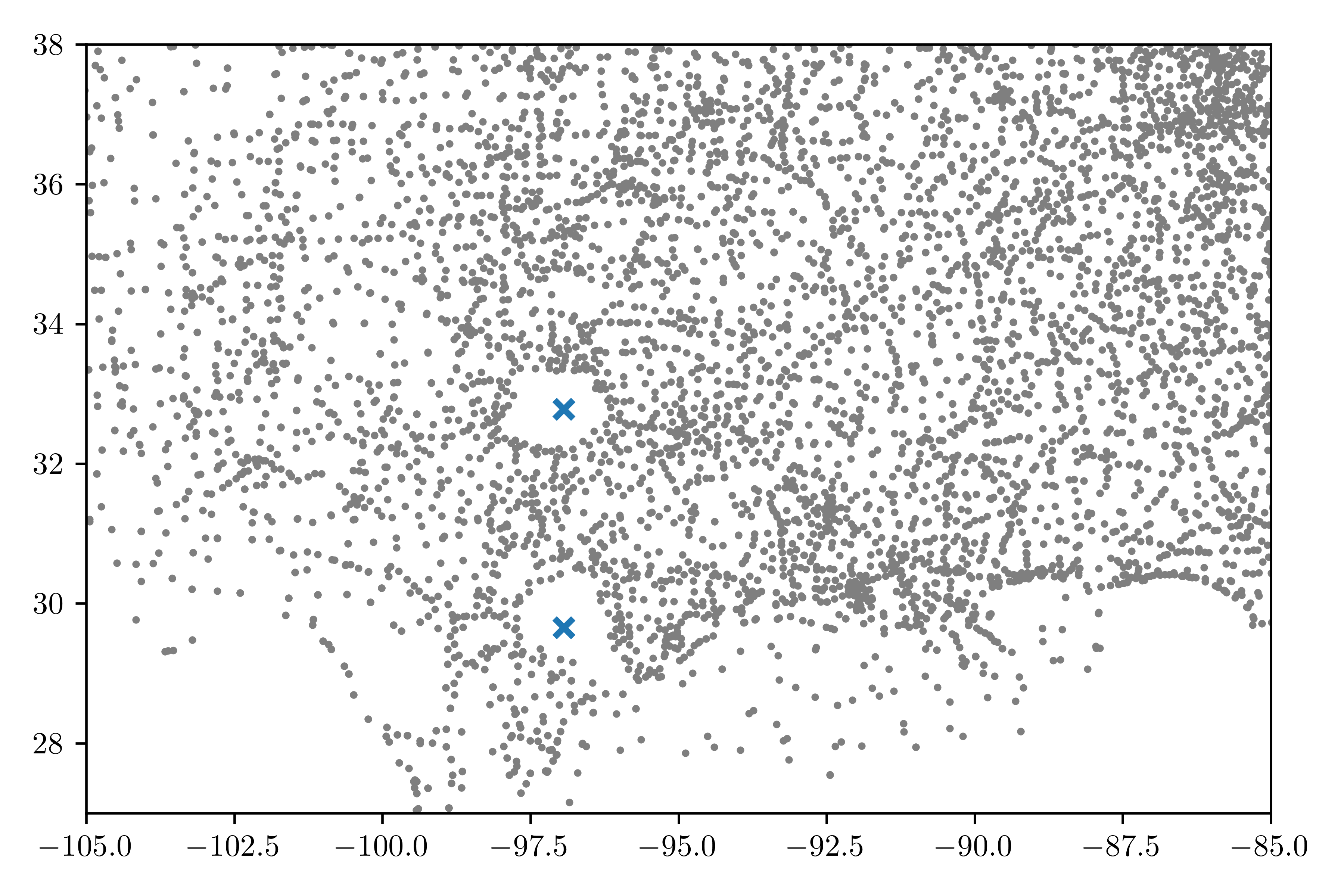}

\caption{Sample points of the cellular tower dataset (grey dots) and significant loops under subsample bootstrapping for different filtrations (red hollow circles and blue crosses). The black rectangle, which contains two holes detected by RDAD, is blown up and shown on the right subplot.}
\label{fig:cellularTowers_clean_loopCmp}
\end{figure}

\begin{figure}[h]
\centering
\includegraphics[width = 0.45\linewidth]{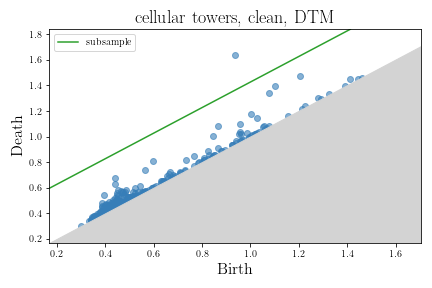}
\includegraphics[width = 0.45\linewidth]{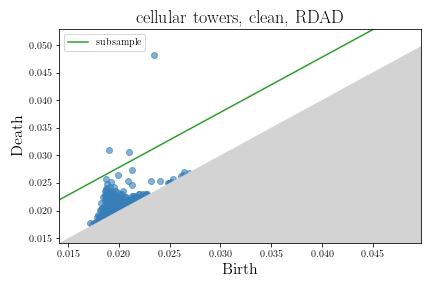}

\caption{Dimension-1 persistence diagrams of different filtration functions for the cellular tower dataset with confidence bands constructed by subsample bootstrapping.}
\label{fig:cellularTowers_clean_persistence}
\end{figure}

We also apply our method to real data. The distance-to-measure filtration and the RDAD filtration are applied to an open dataset \cite{HIFLD21_cellurlar_towers} of cellular tower locations recorded by the Federal Communications Commission (FCC). The two filtrations reveal uninhabited regions in the United States and regions of missing data. As expected, the regions found by the distance-to-measure filtration are large while the small ones are detected only by the RDAD filtration.
Details of the dataset and our preprocessing method are summarized in \cref{sec:simulation_parameters}.

The scatter plot of the cell towers is shown in 
\cref{fig:cellularTowers_clean_loopCmp}, followed by the persistence diagrams of the two filtrations in \cref{fig:cellularTowers_clean_persistence}. Even without the aid of the confidence bands, one point is conspicuously far away from the diagonal in the persistence diagram of each filtration. The RDAD filtration picks up 2 more significant loops.

The two filtrations pick up completely different homology classes. The class picked up by the distance-to-measure filtration is near Steens Mountain Wilderness in Oregan. The 3 classes picked up by the RDAD filtration are Lake Michigan; Dallas, Texas; and the Texan region surrounded by Houston, Austin and San Antonio. The last two regions have considerable population, and the sparsity of cellular towers there is likely due to the dataset's incompleteness.

The homology class picked up by the distance-to-measure filtration is a large sparsely populated area with few cellular towers if any. Those picked up by the RDAD filtration are comparatively smaller regions with an abrupt drop in density. The distance-to-measure filtration fails to pick up the smaller homology classes. Even Lake Michigan is too small because of its narrowness.
The RDAD filtration cannot detect the large sparsely populated regions because the drop in density there is not sharp enough -- nearby regions have very low density anyway.

\section{Discussion}
\label{sec:discussion}

While the proposed filtration does prolong the persistences of homology classes of high-density regions in a robust manner, a range of practical, theoretical, statistical and computational issues warrant further investigation.

In practice, while the proposed filtration is designed to handle data with non-uniform density, and can detect well-separated features in a scale-invariant manner, certain large low-density region may cause difficulties.

The effects of the parameters $k_\text{DTM}$ and $k_\text{den}$ need to be studied in greater detail. For $k_\text{den}$, one may use cross-validation to choose a reasonable $k_\text{den}$. The effect of different choices of $k_\text{DTM}$ is much less understood. In particular, cross-validation does not directly apply because topological signals are inherently global in nature. 

Statistically, a theoretical foundation of the bootstrapping method remains to be developed. This is challenging even in the case of the simpler distance-to-measure filtration.

Computationally, in order to obtain a reasonable level of precision, the calculations above are done on a grid, at the cost of restricting the ambient dimension. Computation of the associated Rips complex is likely more feasible.

\section{Conclusion}
\label{sec:conclusion}

The novel Robust Density-Aware Distance filtration is proposed in the present work for studying data with a non-uniform density. It is designed to make small holes of high-density regions more prominent. It is scale-invariant, and the persistences of homology classes in the proposed filtration depend on the shapes rather than the sizes of the features. Robustness against noise is enhanced through the incorporation of the idea of distance-to-measure. A bootstrapping method
is proposed to gauge the significance of a topological feature.
The properties of the proposed filtration have been established both theoretically and empirically with artificial and real datasets.

\backmatter

\bmhead{Acknowledgements}

The authors would like to thank Andrew Thomas and Jason Manning for insightful conversations.

This research was conducted with support from the Cornell University Center for Advanced Computing, which receives funding from Cornell University, the National Science Foundation, and members of its Partner Program.

All topological computations were done with Gudhi \cite{gudhi_CubicalComplex}. Nearest-neighbor computations were done with scikit-learn \cite{scikit-learn}. 
Numerical computations were done with Numpy \cite{numpy}, Scipy \cite{scipy} and Pandas \cite{pandas_software, pandas_article}.  Codes were compiled with Numba \cite{lam15_numba}. Graphs were generated with Matplotlib \cite{matplotlib}.

\bmhead{Funding}

Chunyin Siu was supported by Croucher Scholarship for Doctoral Studies.
Gennady Samorodnitsky was supported by the NSF grant DMS-2015242 and AFOSR grant FA9550-22-1-0091.
Christina Lee Yu was supported by an Intel Rising Stars Award and NSF grants CCF-1948256 and CNS-1955997.

The authors have no relevant financial or non-financial interests to disclose.

\begin{appendices}

\section{Proofs}
\label{sec:proofs}
We first establish the main results in \cref{sec:properties_small_dense_holes,sec:properties_scale_invariance,sec:properties_robustness} assuming the results from \cref{sec:properties_further}. The latter results are proven in \cref{sec:proofs_further_properties}. Lemmas will be needed along the way. Unless they are used repeatedly, they will be introduced and proven after they are first used.

\subsection{Proofs of \cref{thm:RDAD_interleaving} and its Corollaries}

We first prove the theorem.

\begin{proof}[Proof of \cref{thm:RDAD_interleaving}]
Suppose first that $x \in \Omega_i$ for some $i$. In this case, the lower bound is trivial. Since $P(B(x, \rho_m)) \geq m$, there exists a mass-$m$ measure $\nu_x$ subordinate to $P$ (in the sense defined in \cref{lem:variational_characterization}) whose support lies in $B(x, \rho_m)$.
By the variational characterization of RDAD (\cref{lem:variational_characterization}),
\begin{align*}
RDAD(x)
&\leq \sqrt{\frac{1}{m} \int [f(\xi)^{1/D} d(\xi, x)]^2 d\nu_x(\xi)}
\leq \|f\|_\infty^{1/D} \rho_m
= O(\rho_m).
\end{align*}
This gives the upper bound in the theorem.

Let now $y \in \mathbb{R}^D \setminus \cup \Omega_i$, and for each $i$, let $x_i$ be the point nearest to $y$ in $\Omega_i$. Since $x_i \in \partial \Omega_i$, $f(x_i) = t_i$ by assumption.

We first show the upper bound. Again, for each $i$, let $\nu_i$ be a mass-$m$ measure subordinate to $P$ whose support lies in $B(x_i, \rho_m) \cap f^{-1}[0, a t_i]$. Then
\begin{align*}
RDAD(y)
&\leq \sqrt{\frac{1}{m} \int [f(\xi)^{1/D} d(\xi, x)]^2 d\nu_i(\xi)}
\\& \leq (a t_i)^{1/D}  (d(y, x_i) + \rho_m)
\\& = a^{1/D} t_i^{1/D} d(y, x_i) + O(\rho_m).
\end{align*}
Since this is true for every $i$, the upper bound follows.

For the lower bound, let $r$ be such that $F_y(r) > m/2$. Since
$P(\mathbb{R}^D \setminus \cup \Omega_i) \leq m/2$, the set
$$\{\xi: f(\xi)^{1/D} d(\xi, y) \leq r\}$$
must contain a point $\xi_0 \in \Omega_i$ for some $i$. Then $$r \geq f(\xi_0)^{1/D} d(\xi_0, y) \geq t_i^{1/D}d(x_i, y) = g_i(y) \geq g(y).$$
Therefore, for $q > m/2$, since $F_y^(F_y^{-1}(q)) \geq q > m/2$, we have $F_y^{-1}(q) \geq g(y)$. Hence
$$RDAD(y) \geq \sqrt{\frac{1}{m} \int_{m/2}^m F_y^{-1}(q)^2 dq} \geq \frac{1}{\sqrt{2}} g(y).$$
\end{proof}

\begin{proof}[Proof of \cref{cor:RDAD_persistence_lower_bound}]

Under the sublevel filtration of $g_i$, the relevant homology class has birth and death times $t_i^{1/D} \beta$ and $t_i^{1/D} \delta$. By \cref{lem:separation_Omega} below, it also has the same birth and death time under $g$. 

Let $\tilde \beta = (at_i)^{1/D} \beta + O(\rho_m)$ and $\tilde \delta = \frac{1}{\sqrt{2}} t_i^{1/D} \delta$. \cref{thm:RDAD_interleaving} implies
$$g^{-1}[0, t_i^{1/D} \beta] \subseteq RDAD^{-1}[0, \tilde \beta] \subseteq RDAD^{-1}[0, \tilde \delta] \subseteq g^{-1}[0, t_i^{1/D} \delta],$$
the class's persistence under RDAD is then at least $\tilde \delta - \tilde \beta$.

\end{proof}

\begin{lemma}\label{lem:separation_Omega}
The homology persistence module of the sublevel filtration of $g_i$ at levels strictly less than $s_i$ is a summand of that of $g$.
\end{lemma}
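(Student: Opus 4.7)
The plan is to show that when the filtration level $t$ is strictly less than $s_i$, the sublevel set $g^{-1}[0,t]$ splits as a disjoint union of $g_i^{-1}[0,t]$ and the remainder, and that this decomposition is respected by the inclusions induced by increasing $t$. Since homology takes disjoint unions to direct sums and this is functorial, the persistence submodule coming from $g_i$ appears as a direct summand of the persistence module of $g$ on the interval $[0, s_i)$.

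The key geometric step is to verify the disjointness. Suppose $x \in g_i^{-1}[0,t] \cap g_j^{-1}[0,t]$ for some $j \neq i$. Picking nearest points $y \in \Omega_i$ and $z \in \Omega_j$, the definitions of $g_i$ and $g_j$ give $d(x,y) \leq t\, t_i^{-1/D}$ and $d(x,z) \leq t\, t_j^{-1/D}$, so the triangle inequality yields
\begin{equation*}
d(\Omega_i,\Omega_j) \leq d(y,z) \leq t(t_i^{-1/D} + t_j^{-1/D}),
\end{equation*}
which forces $t \geq d(\Omega_i,\Omega_j)/(t_i^{-1/D} + t_j^{-1/D}) \geq s_i$, contradicting $t < s_i$. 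Hence for every $t < s_i$, the set $g_i^{-1}[0,t]$ is disjoint from $\bigcup_{j \neq i} g_j^{-1}[0,t]$, and therefore
\begin{equation*}
g^{-1}[0,t] = g_i^{-1}[0,t] \;\sqcup\; \bigcup_{j \neq i} g_j^{-1}[0,t].
\end{equation*}

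To upgrade this pointwise splitting to a splitting of the persistence module, I would observe that for $s \leq t < s_i$, the inclusion $g^{-1}[0,s] \hookrightarrow g^{-1}[0,t]$ respects the decomposition: the $g_i$-piece on the left maps into the $g_i$-piece on the right, and similarly for the complementary pieces, because $g_i^{-1}[0,s] \subseteq g_i^{-1}[0,t]$ (and these target pieces are already disjoint). Applying singular homology, which sends disjoint unions to direct sums, then yields a natural isomorphism $H_*(g^{-1}[0,t]) \cong H_*(g_i^{-1}[0,t]) \oplus H_*(\bigcup_{j \neq i} g_j^{-1}[0,t])$ that commutes with all the structure maps for $t < s_i$. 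This exhibits the persistence module of $g_i$ on $[0, s_i)$ as a direct summand of that of $g$. I do not anticipate a significant obstacle here; the only subtle point is making sure the decomposition is natural in $t$, which follows immediately from the fact that each summand is open and closed in $g^{-1}[0,t]$ and the inclusions are componentwise.
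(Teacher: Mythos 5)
Your proposal is correct and follows essentially the same route as the paper: both arguments use the definition of $s_i$ together with the triangle inequality to show that, below level $s_i$, the sublevel set of $g_i$ is disjoint from those of the other $g_j$'s, so it is a clopen piece of $g^{-1}[0,t]$ on which $g=g_i$, and homology then splits off the corresponding summand naturally in $t$. The only cosmetic difference is that you argue the disjointness by contradiction while the paper directly lower-bounds $g_j$ on $g_i^{-1}[0,s_i-\varepsilon]$.
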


\begin{proof}
We claim that for every $\varepsilon > 0$, on $g_i^{-1}[0, s_i - \varepsilon]$, $g_j > s_i > g_i$ for every $j \neq i$. Assuming this claim, $g_i^{-1}[0, s_i - \varepsilon]$ is then a component of $g^{-1}[0, s_i - \varepsilon]$, and on this component, $g_i$ and $g$ coincide. The lemma then follows.

We now establish the claim. If $g_i(x) < s_i$, then for every $j \neq i$,
$$t_i^{1/D}d(x, \Omega_i) < s_i \leq \frac{d(\Omega_i, \Omega_j)}{t_i^{-1/D} + t_j^{-1/D}},$$
and hence
\begin{align*}
g_j(x)
&= t_j^{1/D} d(x, \Omega_j)
\\& \geq t_j^{1/D} (d(\Omega_i, \Omega_j) - d(x, \Omega_i))
\\& \geq t_j^{1/D} [(t_i^{-1/D} + t_j^{-1/D}) s_i - d(x, \Omega_i)]
\\&= s_i + t_j^{1/D} t_i^{-1/D}(s_i - t_i^{1/D} d(x, \Omega_i))
\\&> s_i.
\end{align*}
The claim then follows.
\end{proof}

Since \cref{cor:codim_1_features} follows directly from applying Alexander duality to \cref{cor:RDAD_persistence_lower_bound}, we omit the proof.

\subsection{Proof of \cref{prop:scale_invariance}}

Fix $x$ and let $\tilde x = ax + b$. Let $\tilde P$ be the measure induced by $\tilde f$ and $\tilde F_{\tilde x}(r) = \tilde P[\tilde f(\tilde X)^{1/D} d(\tilde X, \tilde x) < r]$.

It suffices to show $\tilde F_{\tilde x}(r) = F_x(r)$ for every $r$.

Let $E_x = \{y: f(y)^{1/D} d(y, x) \leq r\}$ and $\tilde E_{\tilde x} = \{\tilde y: \tilde f(\tilde y)^{1/D} d(\tilde y, \tilde x) \leq r\}$. Then it can be readily verified that $\tilde E_{\tilde x}  = aE_x + b = \{ay + b: y \in E_x\}$. Then change of variable formula implies
\begin{align*}
\tilde F_{\tilde x}(r)
= \int_{\tilde E_{\tilde x}} \tilde f(\tilde y) d\tilde y
= \int_{E_x} f(y) dy
= F_x(r),
\end{align*}
from which the equality of the two functions follows.

The equality of the persistence diagrams is now apparent, because the sublevel sets of the two filtrations are scaled and translated versions of each other, and hence share the same topological features.

For the empirical version, the claim is clear because the scaling multiplies both numerator and denominator of $d(x, X_{(i)})/d_{(i)}$ by the same factor, and the translation affects neither of them. The equality of persistence diagrams is analogous to the population case.

\subsection{Proofs of \cref{thm:robust} and its Corollary}

We first establish some lemmas which will be used throughout the proof of \cref{thm:robust}. 

The first lemma is a corollary of H\"{o}lder's inequality, and we still call it H\"{o}lder's inequality.

\begin{lemma}[H\"{o}lder's Inequality]\label{lem:Holder_submeasure}
Let $\mu$ and $P$ be measures on $X$. Suppose $\mu$ is subordinate to $P$ (in the sense stated in \cref{lem:variational_characterization}). Then for $p, q, r \in [1, \infty]$ satisfying $\frac{1}{r} = \frac{1}{p} + \frac{1}{q}$,
$$\|\varphi\|_{L^r(\mu)} \leq \mu(X)^{1/q}\|\varphi\|_{L^p(P)}.$$
\end{lemma}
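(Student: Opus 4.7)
The plan is to reduce the claim to the classical Hölder inequality on the measure space $(X, \mu)$ by combining two observations: that domination of measures gives domination of $L^p$ norms, and that the standard Hölder inequality already handles the factor $\mu(X)^{1/q}$ coming from pairing $\varphi$ against the constant function $1$.

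First, I would verify the monotonicity of integrals. Since $\mu$ is subordinate to $P$, the inequality $\int \psi \, d\mu \leq \int \psi \, dP$ holds for every nonnegative measurable $\psi$ (pass from indicators, which is just the definition of subordinacy, to nonnegative simple functions by linearity, and then to general nonnegative measurables by the monotone convergence theorem). Applying this to $\psi = |\varphi|^p$ yields $\|\varphi\|_{L^p(\mu)} \leq \|\varphi\|_{L^p(P)}$ for $p \in [1, \infty)$; for $p = \infty$, the same conclusion follows since every $P$-null set is $\mu$-null.

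Next, I would invoke the standard Hölder inequality on $(X, \mu)$. Writing $\varphi = \varphi \cdot 1$ and applying Hölder with exponents $p$ and $q$ (which satisfy $1/r = 1/p + 1/q$) yields
\begin{equation*}
\|\varphi\|_{L^r(\mu)} \leq \|\varphi\|_{L^p(\mu)} \, \|1\|_{L^q(\mu)} = \mu(X)^{1/q} \|\varphi\|_{L^p(\mu)},
\end{equation*}
with the convention $\mu(X)^{1/\infty} = 1$ covering the case $q = \infty$. Chaining this with the domination inequality from the previous step gives the stated bound $\|\varphi\|_{L^r(\mu)} \leq \mu(X)^{1/q} \|\varphi\|_{L^p(P)}$.

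I do not anticipate any genuine obstacle here; the only delicate points are routine bookkeeping for the endpoint cases $p = \infty$ or $q = \infty$, and making sure the subordinacy hypothesis is invoked only to pass between $\mu$-norms and $P$-norms (Hölder itself is applied on a single measure). Altogether the argument should be short, essentially a one-line combination of two classical facts.
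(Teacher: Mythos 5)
Your proof is correct, but it takes a different decomposition from the paper's. You factor the claim into two independent classical facts: (i) setwise domination $\mu \leq P$ gives $\|\varphi\|_{L^p(\mu)} \leq \|\varphi\|_{L^p(P)}$ (via simple functions and monotone convergence), and (ii) H\"{o}lder on $(X,\mu)$ applied to $\varphi \cdot 1$ produces the factor $\|1\|_{L^q(\mu)} = \mu(X)^{1/q}$. The paper instead works entirely on $(X,P)$: it writes $\|\varphi\|_{L^r(\mu)} = \|\varphi\,(\partial\mu/\partial P)^{1/r}\|_{L^r(P)}$, applies H\"{o}lder with exponents $p$ and $q$ to the pair $\varphi$ and $(\partial\mu/\partial P)^{1/r}$, and then uses subordinacy twice --- once to get the pointwise bound $(\partial\mu/\partial P)^{q/r} \leq \partial\mu/\partial P$ from $\partial\mu/\partial P \leq 1$ and $q/r \geq 1$, and once to evaluate $\int (\partial\mu/\partial P)\,dP = \mu(X)$. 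Your route is more elementary in that it avoids the Radon--Nikodym theorem entirely (harmless here since $P$ is a probability measure, but still a cleaner dependency), and it yields the slightly sharper intermediate bound $\|\varphi\|_{L^r(\mu)} \leq \mu(X)^{1/q}\|\varphi\|_{L^p(\mu)}$ before relaxing to the $P$-norm; the paper's single-step version keeps everything as $P$-integrals, which matches how the lemma is deployed later with couplings and their subordinate measures. Your handling of the endpoint cases $p=\infty$ and $q=\infty$ is also explicit, which the paper glosses over.
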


\begin{proof}
Since $\mu$ is subordinate to $P$, its Radon-Nikodym derivative $\frac{\partial \mu}{\partial P}$ is bounded above by $1$. Since $q/r = q/p + 1 \geq 1$, $\left ( \frac{\partial \mu}{\partial P}\right )^{q/r} \leq \frac{\partial \mu}{\partial P}$.
H\"{o}lder's inequality implies
$$\|\varphi\|_{L^r(\mu)} = \left\|\varphi \left ( \frac{\partial \mu}{\partial P}\right )^{1/r} \right \|_{L^r(P)} \leq \|\varphi\|_{L^p(P)} \left \|\left(\frac{\partial \mu}{\partial P}\right)^{1/r}\right\|_{L^q(P)},$$
and
$$\left\|\left(\frac{\partial \mu}{\partial P}\right)^{1/r}\right\|_{L^q(P)} = \left[\int \left(\frac{\partial \mu}{\partial P}\right)^{q/r} dP \right]^{1/q} \leq \left (\int \frac{\partial \mu}{\partial P} dP \right)^{1/q} = \mu(X)^{1/q}.$$
The result then follows.
\end{proof}

We will also need the following estimate repeatedly.

\begin{lemma}[Estimate on the Distance from a Compact Set]\label{lem:rho_estimate}
Let $1 \leq p < \infty$. Let $K$ be a compact set in $\mathbb{R}^D$ and $x_0 \in K$. Let $\rho(x) = d(x, x_0) = \lvert x - x_0 \rvert$. For any probability measure $P$ with a finite $p^{th}$ moment, $\|\rho\|_{L^p(P)}$ has an upper bound that depends only on $p, K$ and the $p^{th}$ moment of $P$ (but not on $x_0$).
\end{lemma}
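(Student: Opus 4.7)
The plan is to use the triangle inequality in the $L^p(P)$ norm (i.e.\ Minkowski's inequality) to separate the contributions of $x$ and $x_0$. First I would note that $\rho(x) = |x - x_0|$ satisfies the pointwise bound $|x - x_0| \leq |x| + |x_0|$, so taking $L^p(P)$ norms and applying Minkowski's inequality gives
\begin{equation*}
\|\rho\|_{L^p(P)} \leq \||\,\cdot\,|\|_{L^p(P)} + \||x_0|\|_{L^p(P)}.
\end{equation*}
The first term on the right is simply $M_p^{1/p}$, where $M_p$ is the $p^{th}$ moment of $P$. The second term, since $|x_0|$ is constant in the integration variable and $P$ is a probability measure, equals $|x_0|$.

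Next I would use the compactness of $K$ to obtain a uniform bound on $|x_0|$. Letting $R_K = \max_{y \in K} |y|$ (finite because $K$ is compact), we have $|x_0| \leq R_K$ for every $x_0 \in K$, so
\begin{equation*}
\|\rho\|_{L^p(P)} \leq M_p^{1/p} + R_K.
\end{equation*}
This bound depends only on $p$, on $K$ (through $R_K$) and on the $p^{th}$ moment of $P$, as required, and is uniform in $x_0 \in K$.

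There is no significant obstacle: the argument is an immediate consequence of Minkowski's inequality together with the compactness of $K$. If a slightly more explicit bound with a constant in front of $M_p^{1/p}$ is preferred, one can alternatively use the convexity inequality $|x - x_0|^p \leq 2^{p-1}(|x|^p + |x_0|^p)$ and integrate, yielding $\|\rho\|_{L^p(P)} \leq 2^{(p-1)/p}(M_p + R_K^p)^{1/p}$; either form suffices for subsequent applications.
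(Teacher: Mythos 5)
Your proof is correct and follows essentially the same route as the paper's: Minkowski's inequality splits $\|\rho\|_{L^p(P)}$ into $M_p^{1/p}$ plus the constant $|x_0|$, which compactness of $K$ bounds uniformly by $\max_{y \in K}|y|$. No issues.
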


\begin{proof}
$$\|\rho\|_{L^p(P)} = \left(\int \abs{x - x_0}^p dP\right)^{1/p} \leq \abs{x_0} + \left(\int \abs{x}^p dP\right)^{1/p} \leq \max_{x' \in K} \abs{x'} + \left(\int \abs{x}^p dP\right)^{1/p}.$$
\end{proof}

We will need other lemmas, but since they will be used only once, we establish them after proving the theorem.

\begin{proof}[Proof of \cref{thm:robust}]

Let $P$ and $\tilde P$ be the measures induced by $f$ and $\tilde f$ on $\mathbb{R}^D$.

Fix $x_0 \in K$ and let $\rho(x) = d(x, x_0) = \lvert x - x_0 \rvert$. Let $s = (2^{-1} - r^{-1})^{-1}$.

By the variational characterization of RDAD (\cref{lem:variational_characterization}), let $\nu^*$ be a mass-$m$ measure subordinate to $P$ such that
$$\sqrt{m} RDAD(x_0; f, m) = \|f^{1/D} \rho\|_{L^2(\nu^*)}.$$
Let $\Pi$ be an optimal coupling between $P$ and $\tilde P$ such that
$$W_p(f, \tilde f) = \| \lvert x - y \rvert\|_{L^p(\Pi)}.$$
Let $\tilde \nu$ and $\pi$ be subordinate to $\tilde P$ and $\Pi$ respectively such that $\pi$ is a coupling between $\nu^*$ and $\tilde \nu$. They exist by \cref{lem:transport_submeasure} below.

Again, by the variational characterization of RDAD (\cref{lem:variational_characterization}) and triangle inequality,
\begin{align}
&\sqrt{m} RDAD(x_0; \tilde f, m) \notag
\\& \leq \|\tilde f^{1/D} \rho\|_{L^2(\tilde \nu)}\notag
\\& \leq \|f^{1/D} \rho\|_{L^2(\nu^*)} + \left \lvert \|f^{1/D} \rho\|_{L^2(\nu^*)} - \|f^{1/D} \rho\|_{L^2(\tilde \nu)} \right \rvert + \|(\tilde f^{1/D} - f^{1/D}) \rho\|_{L^2(\tilde \nu)} \notag
\\&= \sqrt{m} RDAD(x_0; f, m) + \left \lvert \|f^{1/D} \rho\|_{L^2(\nu^*)} - \|f^{1/D} \rho\|_{L^2(\tilde \nu)} \right \rvert + \|(\tilde f^{1/D} - f^{1/D}) \rho\|_{L^2(\tilde \nu)}.
 \label{eqn:RDAD_robustness_main_estimate}
\end{align}

We first estimate the first error term $A = \left \lvert \|f^{1/D} \rho\|_{L^2(\nu^*)} - \|f^{1/D} \rho\|_{L^2(\tilde \nu)} \right \rvert$.
Since $\pi$ is a coupling between $\nu^*$ and $\tilde \nu$, we have
\begin{align*}
\left \lvert \|f^{1/D} \rho\|_{L^2(\nu^*)} - \|f^{1/D} \rho\|_{L^2(\tilde \nu)} \right \rvert
&= \left \lvert \|(f^{1/D} \rho)(x)\|_{L^2(\pi)} - \|(f^{1/D} \rho)(y)\|_{L^2(\pi)} \right \rvert
\\&\leq \|(f^{1/D} \rho)(x)-(f^{1/D} \rho)(y)\|_{L^2(\pi)}.
\end{align*}

Since $\rho$ and $f^{1/D}$ are Lipschitz and $f$ is bounded,
$$ \lvert (f^{1/D}\rho)(x) - (f^{1/D}\rho)(y) \rvert \leq \|f\|_\infty^{1/D} \lvert x - y \rvert + \text{Lip}(f^{1/D}) \rho(x) \lvert x-y \rvert = O((1 + \rho(x))\lvert x-y \rvert).$$

Let $u = (r^{-1} - p^{-1})^{-1}$. Recallng the definition of $s$ at the beginning of the proof, we have $2^{-1} = p^{-1} + u^{-1} + s^{-1}$, and hence H\"{o}lder's inequality gives
\begin{align*}
\|(1 + \rho(x))\lvert x - y \rvert\|_{L^2(\pi)}
&\leq m^{1/s} \|1 + \rho(x)\|_{L^u(\Pi)} \|\lvert x - y \rvert\|_{L^p(\Pi)}
\\&= m^{1/s} \|1 + \rho\|_{L^u(P)} W_p(f, \tilde f).
\end{align*}
By \cref{lem:rho_estimate}, the central factor of the last line is bounded. Therefore,
$$A = O(m^{1/s} W_p(f, \tilde f)).$$

For the second error term
$$B = \|(\tilde f^{1/D} - f^{1/D}) \rho\|_{L^2(\tilde \nu)}$$
in \cref{eqn:RDAD_robustness_main_estimate}, H\"older's inequality and 
\cref{lem:difference_roots_combined}
below (applied to $\mu = \tilde P$, $\varphi = \tilde f$, $\psi = f$, $\theta = \rho$, $p = \infty$)
gives
$$B \leq m^{1/s}\|\tilde f^{1/D} - f^{1/D} \rho\|_{L^2(\tilde P)} = O(m^{1/s}\|f - \tilde f\|_\infty^{\frac{D+r}{(D+1)(r+1)}}).$$

Combining the estimates of $A$ and $B$, one side of the bound then follows. The other side is analogous, but not completely symmetric, because $\tilde f^{1/D}$ is \emph{not} assumed to be Lipschitz. We modify the argument above as follows.

Again, let $\sqrt{m}RDAD(x_0; \tilde f) = \|\tilde f^{1/D} \rho\|_{L^2(\tilde \nu^*)}$. Let $\nu$ and $\tilde \pi$ be subordinate to $P$ and $\Pi$ such that $\tilde \pi$ is a coupling between $\nu$ to $\tilde \nu^*$. Then
\begin{align*}
&\sqrt{m} RDAD(x_0; f) \notag
\\& \leq \|f^{1/D} \rho\|_{L^2(\nu)}\notag
\\& \leq \|\tilde f^{1/D} \rho\|_{L^2(\tilde \nu^*)} + \|(\tilde f^{1/D} - f^{1/D}) \rho\|_{L^2(\tilde \nu^*)} + \left \lvert \|f^{1/D} \rho\|_{L^2(\tilde \nu^*)} - \|f^{1/D} \rho\|_{L^2(\nu)} \right \rvert \notag
\\&= \sqrt{m}RDAD(x_0; \tilde f) + \|(\tilde f^{1/D} - f^{1/D}) \rho\|_{L^2(\tilde \nu^*)} + \left \lvert \|f^{1/D} \rho\|_{L^2(\tilde \nu^*)} - \|f^{1/D} \rho\|_{L^2(\nu)} \right \rvert.
\end{align*}
Since $\tilde f$ only appears in the last line in the difference $\tilde f^{1/D} - f^{1/D}$, the analysis of these error terms are now completely analogous to that of $A$ and $B$. The proof then follows.
\end{proof}

\begin{lemma}\label{lem:transport_submeasure}
Let $P$, $\tilde P$ and $\mu$ be measures on a common measurable space, and $\pi$ be a coupling between $P$ and $\tilde P$. If $\mu$ is subordinate to $P$ (in the sense defined in \cref{lem:variational_characterization}), then there exist measures $\tilde \mu, \tilde \pi$ subordinate to $\tilde P$ and $\pi$ respectively such that $\tilde \mu$ has the same mass as $\mu$ and $\tilde \pi$ is a coupling between $\mu$ and $\tilde \mu$.
\end{lemma}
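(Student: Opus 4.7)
The plan is to build $\tilde\pi$ by reweighting $\pi$ via the Radon--Nikodym derivative of $\mu$ with respect to $P$, and then read off $\tilde\mu$ as the second marginal of $\tilde\pi$. Subordination of $\mu$ to $P$ means $\mu \ll P$ with density $h := d\mu/dP$ taking values in $[0,1]$ almost surely, so this density is the right object for reweighting.

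Concretely, I would define $\tilde\pi$ on the product space by $d\tilde\pi(x,y) = h(x)\, d\pi(x,y)$. Since $0\le h\le 1$, one has $\tilde\pi(E) \le \pi(E)$ for every measurable $E$, giving subordination $\tilde\pi \le \pi$ at no cost. The next step is to compute the first marginal: for a measurable set $A$ in the $x$-space,
\begin{equation*}
\tilde\pi(A\times Y) \;=\; \int_{A\times Y} h(x)\, d\pi(x,y) \;=\; \int_A h(x)\, dP(x) \;=\; \mu(A),
\end{equation*}
using that the first marginal of $\pi$ is $P$. So the first marginal of $\tilde\pi$ is exactly $\mu$.

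Now define $\tilde\mu$ to be the second marginal of $\tilde\pi$, i.e.\ $\tilde\mu(B) = \tilde\pi(X\times B)$. By construction $\tilde\pi$ is then a coupling of $\mu$ with $\tilde\mu$. To verify $\tilde\mu \le \tilde P$, for any measurable $B$ in the $y$-space,
\begin{equation*}
\tilde\mu(B) \;=\; \int 1_B(y)\, h(x)\, d\pi(x,y) \;\le\; \int 1_B(y)\, d\pi(x,y) \;=\; \pi(X\times B) \;=\; \tilde P(B),
\end{equation*}
again using $h\le 1$ and the fact that $\tilde P$ is the second marginal of $\pi$. Finally, the mass check $\tilde\mu(Y) = \int h(x)\, d\pi(x,y) = \int h\, dP = \mu(X)$ shows $\tilde\mu$ and $\mu$ have the same total mass.

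There is no real obstacle here; the only thing one should be mildly careful about is justifying the existence of the Radon--Nikodym derivative, for which $\sigma$-finiteness of $P$ is enough (and in our applications $P$ is a probability measure, so this is automatic). Everything else is Fubini applied to the nonnegative integrand $h(x) 1_{A\times B}(x,y)$, which needs no integrability hypothesis. So the lemma reduces to picking the right density and verifying marginals, and the construction above does exactly that.
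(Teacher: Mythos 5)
Your construction is exactly the one in the paper: reweight $\pi$ by the Radon--Nikodym derivative $d\mu/dP$ (bounded by $1$ thanks to subordination) evaluated in the first coordinate, and take $\tilde\mu$ to be the second marginal of the resulting $\tilde\pi$. Your write-up just spells out the marginal and mass computations that the paper leaves implicit, so the proof is correct and follows the same route.
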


\begin{proof}
Since $\mu$ is subordinate to $P$, its Radon-Nikodym derivative $\frac{\partial \mu}{\partial P}$ is bounded above by $1$.
Let $d\tilde \pi(x, y) = \frac{\partial \mu}{\partial P}(x) d\pi(x, y)$. Then $\tilde \pi$ is subordinate to $\pi$; and its $y$-marginal, which we call $\tilde \mu$, is subordinate to $\tilde P$ and has the same mass as $\mu$.
\end{proof}

\begin{lemma}\label{lem:difference_roots_combined}
Let $\varphi$ be a bounded moderately tailed density on $\mathbb{R}^D$ with parameters $C, \alpha$, where $\alpha \geq D^2 - D$. Let $\mu$ be the measure induced by $\varphi$ on $\mathbb{R}^D$. Let $\psi$ and $\theta$ be measurable functions on $\mathbb{R}^D$. Suppose $\psi$ is bounded and nonnegative, and $\theta \in L^u(\mu)$ for every $1 \leq u < \infty$. Then for $1 < r < p \leq \infty$, if $\|\varphi - \psi\|_{L^p(\mu)} \leq 1$, then
$$\|(\varphi^{1/D} - \psi^{1/D})\theta\|_{L^r(\mu)} = O(\|\varphi - \psi\|_{L^p(\mu)}^
{\frac{D+r}{(D+1)(r+1)}}
),$$
where the big-Oh constant depends only on $\|\varphi\|_\infty, \|\psi\|_\infty, p, r, D, C$ and the $L^u(\mu)$ norms of $\theta$, where $u$ ranges over $[r, \infty)$.
\end{lemma}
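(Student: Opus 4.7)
The plan is to combine two complementary pointwise inequalities for $\lvert \varphi^{1/D} - \psi^{1/D}\rvert$ with a three-way splitting of $\mathbb{R}^D$ and moderate-tail estimates for $\mu$. Subadditivity of $t\mapsto t^{1/D}$ on $[0,\infty)$ gives $\lvert \varphi^{1/D} - \psi^{1/D}\rvert \leq 2\max(\varphi,\psi)^{1/D}$, the better estimate when both densities are small. Applying the factorization $a^D-b^D = (a-b)(a^{D-1}+\cdots+b^{D-1})$ to $a=\varphi^{1/D}$, $b=\psi^{1/D}$ yields the Lipschitz-like bound $\lvert \varphi^{1/D} - \psi^{1/D}\rvert \leq \lvert \varphi-\psi\rvert/\max(\varphi,\psi)^{(D-1)/D}$, the better estimate when $\max(\varphi,\psi)$ is bounded away from zero.

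For thresholds $t,R>0$ to be chosen and some $u>r$, I would cover $\mathbb{R}^D$ by $A_1 = \{\max(\varphi,\psi) \leq t\} \cap \{\lvert x\rvert \leq R\}$, $A_2 = \{\max(\varphi,\psi) > t\}$, and $A_3 = \{\lvert x\rvert > R\}$. On $A_1$ the first pointwise bound gives $\lvert \varphi^{1/D} - \psi^{1/D}\rvert^r \leq 2^r t^{r/D}$; since $d\mu = \varphi\,dx$ and $\varphi\leq t$ there, $\mu(A_1) \leq t\omega_D R^D$, and H\"older against $\|\theta\|_{L^u(\mu)}$ produces a contribution of order $t^{r/D}(tR^D)^{1-r/u}\|\theta\|_{L^u(\mu)}^r$. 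On $A_2$ the second bound plus H\"older with conjugate exponents $p/r$ and $p/(p-r)$ (using $r<p$) yields a contribution of order $t^{-r(D-1)/D}\|\varphi-\psi\|_{L^p(\mu)}^r\|\theta\|_{L^{rp/(p-r)}(\mu)}^r$. On $A_3$ boundedness of $\lvert \varphi^{1/D} - \psi^{1/D}\rvert$ together with the tail bound $\mu(A_3) \leq CR^{-\alpha}$ and one more H\"older give a contribution of order $R^{-\alpha(1-r/u)}\|\theta\|_{L^u(\mu)}^r$.

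Writing $\Delta = \|\varphi-\psi\|_{L^p(\mu)}$ and scaling $t = \Delta^a$, $R = \Delta^{-b}$ for positive constants $a,b$ (with $u$ large), I would equate the three contributions and solve for $a$ and $b$. After taking the $r$-th root and sending $u\to\infty$, the resulting exponent on $\Delta$ works out to $(D+r)/(r(D+1)+D)$, which strictly exceeds the claimed $(D+r)/((D+1)(r+1))$ (whose denominator is one larger); since $\Delta\leq 1$, this gives the stated bound with some sufficiently large finite $u$ in place of the limit. The condition $\alpha \geq D^2 - D$ is precisely what is needed for the $A_3$ contribution to decay fast enough to let the three-way balancing close; for smaller $\alpha$, the tail term would dominate. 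The hard part will be the algebraic bookkeeping of this optimization, together with tracking how the implicit constants depend on $\|\varphi\|_\infty$, $\|\psi\|_\infty$, $p$, $r$, $D$, $C$ and the $L^u(\mu)$-norms of $\theta$, and arranging the Hölder step on $A_2$ so that it handles $p = \infty$ and $p < \infty$ uniformly.
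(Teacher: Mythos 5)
Your argument is correct, and it follows the same broad strategy as the paper (two complementary pointwise bounds for $\lvert\varphi^{1/D}-\psi^{1/D}\rvert$, a three-region cover, and a tail-plus-ball-volume estimate for the low-density region), but the decomposition and the optimization are genuinely different. The paper covers $\mathbb{R}^D$ by $E=\{\varphi>\eta\}$, $F=\{\lvert\varphi-\psi\rvert>\eta^{1-1/D}\}$ and $G=\{\varphi\le\eta,\ \psi\le 2\eta^{1-1/D}\}$ with a single parameter $\eta$: since only $\varphi$ is thresholded, it needs $F$ (controlled by Markov's inequality) to handle the locus where $\psi$ is large but $\varphi$ is not, and the spatial radius is hidden inside the estimate $\mu\{\varphi\le\eta\}\le\omega_D\eta R^D+CR^{-\alpha}$ with $R=\eta^{-1/D^2}$ hard-wired. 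Your cover by $\{\max(\varphi,\psi)>t\}$ and its complement intersected with $\{\lvert x\rvert\le R\}$ and $\{\lvert x\rvert>R\}$ avoids the Markov step entirely and treats $t$ and $R$ as two free parameters balanced jointly; the payoff is the slightly sharper limiting exponent $\tfrac{D+r}{Dr+r+D}$, which you then correctly relax to the stated $\tfrac{D+r}{(D+1)(r+1)}$ using $\|\varphi-\psi\|_{L^p(\mu)}\le 1$ and a large but finite $u$ (the paper instead picks $q=\tfrac{D(Dr+1)}{(D-1)(D+1)}$ to land exactly on the stated exponent). Two small points to tidy up when writing it out: the case $D=1$ should be dispatched separately (as the paper does -- the claim is immediate from H\"older there), since your balancing equation $a=D(1-e)/(D-1)$ degenerates; and ``$\alpha\ge D^2-D$ is precisely what is needed'' is slightly overstated, since your balanced exponent at that threshold still strictly exceeds the target -- the slack is what absorbs the finite-$u$ correction.
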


\begin{proof}

The claim is trivial for $D = 1$. Below we assume $D > 1$.

Let $0 < \eta \leq 1$. We will choose its value later. Note that the following sets cover the whole space:
$$E = \{\varphi > \eta\}, \quad F = \{\lvert \varphi - \psi \rvert > \eta^{1-1/D}\}, \quad G = \{\varphi \leq \eta, \psi \leq 2\eta^{1-1/D}\}.$$
We estimate the $L^r$-norm of $(\varphi^{1/D} - \psi^{1/D})\theta$ on each of the sets above.
Let $u = (r^{-1} - p^{-1})^{-1} \geq r$. 

For $E$, H\"{o}lder's inequality and the elementary inequality $\lvert a^{1/D} - b^{1/D} \rvert \leq \frac{\lvert a - b \rvert}{a^{1-1/D}}$ give 
\begin{align*}
\|(\varphi^{1/D} - \psi^{1/D})\theta \mathbf{1}_E\|_r
&\leq \|\theta\|_u \|(\varphi^{1/D} - \psi^{1/D})\mathbf{1}_E\|_p
\\&\leq \frac{1}{\eta^{1-1/D}} \|\theta\|_u \|\varphi - \psi\|_p
\end{align*}

For $F$, H\"{o}lder's inequality and Markov's inequality imply
\begin{align*}
\|(\varphi^{1/D} - \psi^{1/D})\theta \mathbf{1}_F\|_r
&\leq (\|\varphi\|_\infty^{1/D} + \|\psi\|_\infty^{1/D}) \|\theta\|_u \|\mathbf{1}_F \|_p
\\&\leq (\|\varphi\|_\infty^{1/D} + \|\psi\|_\infty^{1/D}) \|\theta\|_u \left(\frac{1}{\eta^{1-1/D}} \|\varphi - \psi\|_p\right)
\end{align*}

For $G$, let $q \in (r, \infty)$, whose value will be chosen later, and let $v = (r^{-1} - q^{-1})^{-1} \geq r$. H\"{o}lder's inequality then gives
\begin{align*}
\|(\varphi^{1/D} - \psi^{1/D})\theta \mathbf{1}_G\|_r
&\leq (1 + 2^{1/D}) \eta^{(1-1/D)(1/D)} \|\theta\|_v \|\mathbf{1}_{\varphi \leq \eta} \|_q
\\&\leq (1 + 2^{1/D}) \eta^{(1-1/D)(1/D)} \|\theta\|_v \|(\mu\{\varphi \leq \eta\}^{1/q}).
\end{align*}

For the last factor,
\begin{align*}
\mu\{\varphi \leq \eta\}
= \int_{\varphi \leq \eta} \varphi(x) dx
\leq \int_{\substack{\varphi \leq \eta \\ \lvert x \rvert \leq R}} \varphi(x) dx +  \int_{\lvert x \rvert > R} \varphi(x) dx
\leq \omega_D \eta R^D + CR^{-\alpha}.
\end{align*}
Since $\alpha \geq D^2 - D$, putting $R = \eta^{-1/D^2}$ gives
$$\mu\{\varphi \leq \eta\}^{1/q} = O(\eta^{(1-1/D)(1/q)}),$$
and hence
$$\|(\varphi^{1/D} - \psi^{1/D})\theta \mathbf{1}_G\|_r = O(\eta^{(1-1/D)(1/D + 1/q)}).$$

Summing the three estimates gives
$$\|(\varphi^{1/D} - \psi^{1/D})\theta\|_r = O \left(\frac{1}{\eta^{1-1/D}} \|\varphi - \psi\|_p + \eta^{(1-1/D)(1/D + 1/q)} \right).$$
The lemma then follows by letting
$$\|\varphi - \psi\|_p = \eta^{(1 - 1/D)(1/q + 1/D + 1)} \quad \text{and} \quad q = \frac{D(Dr + 1)}{(D-1)(D+1)} > r.$$
Note that $q$ is finite when $D > 1$.
\end{proof}

\begin{proof}[Proof of \cref{cor:robust}]
Let
\begin{align*}
g^\text{add}_\varepsilon(x) &= \frac{1}{\varepsilon^D} g^\text{add}(x/\varepsilon)\\
\tilde f(x) &= [(1 - \delta) (f * g^\text{add}_\varepsilon) + \delta g^\text{out}](x)
\end{align*}
be the densities of $\varepsilon Y^\text{add}$ and $\tilde X$.
Since $X, Y^\text{add}$ and $Y^\text{out}$ have moderate tails, so does $\tilde X$ (with the same $\alpha$ but possibly a different $C$). Similarly, all moments of $\tilde X$ are finite.

Therefore it suffices to 
let $p = r+1$ and 
show
\begin{align*}
\|f - \tilde f\|_\infty &= O(\varepsilon + \delta).
\\
W_p(f, \tilde f) &= O(\varepsilon + \delta^{1/p})
\end{align*}

We first estimate $\|f - \tilde f\|_\infty$.
For each $x \in \mathbb{R}^D$,
\begin{align*}
&\lvert \tilde f(x) - f(x) \rvert
\\&\leq (1-\delta) \int g^\text{add}_\varepsilon(x-y) \lvert f(y) - f(x) \rvert dy + \delta \lvert g^\text{out} - f \rvert
\\&\leq \int \lvert x-y \rvert g^\text{add}_\varepsilon(x-y) \frac{\lvert f(y) - f(x) \rvert}{\lvert x-y \rvert} dy + O(\delta) & \text{($f, g^\text{out}$ bounded)}
\\&= O(\int \lvert x-y \rvert g^\text{add}_\varepsilon(x-y) dy) + O(\delta) & \text{($f$ Lipschitz)}.
\end{align*}
$f$ is Lipschitz because $f^{1/D}$ is and $x \mapsto x^D$ is Lipschitz on $[0, \|f\|_\infty^{1/D}]$.
For the first term,
$$\int \lvert x-y \rvert g^\text{add}_\varepsilon(x-y) dy = \int \lvert y \rvert g^\text{add}_\varepsilon(y) dy = E \lvert \varepsilon Y^\text{add} \rvert = O(\varepsilon).$$
The first estimate then follows.

For $W_p(f, \tilde f)$, let $\Pi^\text{out}$ be a coupling between $f(x)dx$ and $g^\text{out}(y) dy$. Then $$\| \lvert x-y \rvert\|_{L^p(\Pi^\text{out})} = O(1),$$
because all moments of $f$ and $g^\text{out}$ are finite, the two densities have finite Wasserstein distances from the dirac delta measure at $0$.

Consider the following coupling between $P$ and $\tilde P$:
$$d\Pi(x, y) = (1-\delta) g^\text{add}_\varepsilon(y - x) f(x) dxdy + \delta d\Pi^\text{out}(x, y).$$
Then
\begin{align*}
W_p(f, \tilde f)^p
&\leq \|\lvert x - y \rvert\|_{L^p(\Pi)}^p
\\&= \int (1 - \delta) \lvert x - y \rvert^p g_\varepsilon (y - x) f(x) dx dy + \delta \|\lvert x-y \rvert\|_{L^p(\Pi^\text{out})}^p
\\&\leq \int [\lvert z \rvert^p g_\varepsilon (z) f(x) dx dz + O(\delta) & (z = y - x)
\\&= E[\lvert \varepsilon  Y^\text{add} \rvert^{p}] + O(\delta)
\\&= O(\varepsilon^p + \delta).
\end{align*}
The result then follows.

\end{proof}

\subsection{Proofs of Properties in \cref{sec:properties_further}}
\label{sec:proofs_further_properties}

In this subsection, we establish properties of RDAD in \cref{sec:properties_further} in logical order. The results proven in previous sections rely on these properties. Since none of the following proofs depend on those previous results, with the sole exception that \cref{prop:lipschitz_continuity} depends on H\"{o}lder's inequality (\cref{lem:Holder_submeasure}), which does not depend on any results in this section, our arguments are not circular.

\begin{proof}[Proof of \cref{lem:variational_characterization}]
The argument is similar to that in the proof of Proposition 2.2 of \cite{buchet16_DTM_approximation}.

Fix $x$ and fix a mass-$m$ measure $\nu$ that is subordinate to $P$. Consider the random variable $Y = f(X)^{1/D} d(X, x)$.

Let $F_P^{-1}$ and $F_\nu^{-1}$ be the quantile functions of $Y$ under $P$ and $\nu$ respectively. Then $m \cdot RDAD(x)^2 = \int_0^m F_P^{-1}(q)^2 dq$.

The change of variable formula gives an analogous expression for $\nu$:
$$\int [f(\xi)^{1/D} d(\xi, x)]^2 d \nu(\xi) = \int Y^2 dF_\nu = \int_0^m F_\nu^{-1} (t)^2 dt.$$

Since $\nu$ is suborindate to $P$, 
\begin{equation}\label{eqn:lemma_proof_P_nu_quantile_comparison}
F_P^{-1} \leq F_\nu^{-1},
\end{equation}
hence $$\int [f(\xi)^{1/D} d(\xi, x)]^2 d \nu(\xi) \geq \int_0^m F_P^{-1}(q)^2 dq = m \cdot RDAD(x)^2.$$

This proves one inequality in the lemma.

To finish the proof, it suffices to find a $\nu$ such that equality holds on $(0, m)$ in \cref{eqn:lemma_proof_P_nu_quantile_comparison}. 
Let 
\begin{align*}
E_< &= \{\xi: f(\xi)^{1/D} d(\xi, x) < F^{-1}_P(m)\}\\ 
E_= &= \{\xi: f(\xi)^{1/D} d(\xi, x) = F^{-1}_P(m)\}.
\end{align*}
Define 
$$\nu = P \mid E_< + (m - P(E_<))\frac{P \mid E_=}{P(E_=)}.$$
This $\nu$ has the desired property, because by construction, $F_P(y) = F_\nu(y)$ for $y < F^{-1}_P(m)$, and hence $F_P^{-1}(t) = F_\nu^{-1}(t)$ for $t < m$. The result then follows.
\end{proof}

\begin{proof}[Proofs of \cref{prop:lipschitz_continuity,prop:RDAD_approximates_DAD}]

We first prove the Lipschitz continuity of RDAD. Then we prove the convergence of RDAD to DAD. Lipschitz continuity of DAD then follows from the locally uniform convergence of RDAD to DAD.

For Lipschitz continuity of RDAD, when $2 < p < \infty$,
\cref{lem:variational_characterization} implies
$$RDAD(x) = \frac{1}{\sqrt{m}}\|f(\cdot)^{1/D} d(\cdot, x)\|_{L^2(\nu_x)}$$
for some mass-$m$ measure $\nu_x$.
Now, letting $q = (2^{-1} - p^{-1})^{-1}$
\begin{align*}
RDAD(y)
& \leq \frac{1}{\sqrt{m}}\|f(\cdot)^{1/D} d(\cdot, y)\|_{L^2(\nu_x)}
\\& \leq \frac{1}{\sqrt{m}}\|f(\cdot)^{1/D} d(x, y) + f(\cdot)^{1/D} d(\cdot, x)\|_{L^2(\nu_x)}
\\& \leq \frac{1}{\sqrt{m}}\|f(\cdot)^{1/D} d(x, y)\|_{L^2(\nu_x)} + \frac{1}{\sqrt{m}} \|f(\cdot)^{1/D} d(\cdot, x)\|_{L^2(\nu_x)}
\\& = \frac{1}{\sqrt{m}}\|f(\cdot)^{1/D}\|_{L^2(\nu_x)} d(x, y) + RDAD(x)
\\& \leq \frac{1}{\sqrt{m}}m^{1/q}\|f^{1/D}\|_{L^p(P)} d(x, y) + RDAD(x).
\\& = \left[\frac{1}{m^{1/p}} \left(\int f(\xi)^{1+p/D} d\xi\right)^{1/p}\right] d(x, y) + RDAD(x).
\end{align*}
Lipschitz continuity of RDAD then follows by interchanging $x$ and $y$. 

When $f$ is bounded, the above argument follows through if we take $p = \infty$. The following adaptations are necessary: $1/q$ is now $1/2$, and $\|f^{1/D}\|_{L^p(P)}$ in the second last line and the factor in the square bracket in the last line each needs to be replaced by $\|f^{1/D}\|_\infty = \|f\|_\infty^{1/D}$.

For the convergence of RDAD to DAD, since $RDAD(x)$ is the $L^2$ average of $F_x^{-1}(q)$ on $[0, m]$ and $F_x^{-1}$ is increasing,
$$DAD(x) = \lim_{q \to 0^+} F_x^{-1}(q) \leq RDAD(x) \leq F_x^{-1}(m).$$

For each $x$, the right-hand side converges to $\lim_{q \to 0^+} F_x^{-1}(q)$ as $m \to 0$ by definition, and the limit is just $DAD(x)$. Pointwise convergence then follows.

For uniform convergence on compact sets under the assumption that $f$ is bounded, Lipscthiz continuity of RDAD implies $\{RDAD(\cdot; f, m)\}_{m \in (0, 1]}$ is equicontinuous. By Arzela-Ascoli theorem, pointwise convergence of the family implies uniform convergence on compact sets.
\end{proof}

Before proving 
\cref{prop:functional_consistency},
we need a lemma, which is the counterpart of Lemma 8 in \cite{chazal18_DTM_statistics}:

\begin{lemma}
\label{lem:lemma8_modified}
Let $f: \mathbb{R}^D \to \mathbb{R}$ be a density with bounded support and a finite upper bound. Let $P$ be the probability measure on $\mathbb{R}^D$ with density $f$. Let $0 < m < 1$. Then $F_x^{-1}(m)$ is Lipschitz in $x$ on $\mathbb{R}^D$, and $(F_x^{-1}(m))^2$ is Lipschitz in $x$ on every compact set in $\mathbb{R}^D$.
\end{lemma}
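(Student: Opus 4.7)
The plan is to show the pointwise estimate
\[
F_x(r) \;\le\; F_y\!\left(r + \|f\|_\infty^{1/D}\,|x-y|\right)
\qquad \text{for all } r\ge 0,\ x,y\in\mathbb{R}^D,
\]
convert this into a Lipschitz estimate on the generalized inverse $F_x^{-1}(m)$, and then upgrade to $(F_x^{-1}(m))^2$ on a compact set by writing $a^2-b^2=(a+b)(a-b)$ and bounding $F_x^{-1}(m)$ on the compact set.

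First I would prove the pointwise estimate by rewriting $F_x(r) = P(E_{x,r})$ with $E_{x,r} = \{\xi : f(\xi)^{1/D} d(\xi, x) \le r\}$ and showing the set inclusion $E_{x,r}\subseteq E_{y, r + \|f\|_\infty^{1/D}|x-y|}$. For any $\xi\in E_{x,r}$, the triangle inequality gives
\[
f(\xi)^{1/D} d(\xi, y) \;\le\; f(\xi)^{1/D} d(\xi, x) + f(\xi)^{1/D} d(x,y) \;\le\; r + \|f\|_\infty^{1/D}\,|x-y|,
\]
which yields the claimed inclusion, and hence the estimate on $F_x$.

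Next I would unpack the consequence for the generalized inverse $F_x^{-1}(m) = \inf\{r : F_x(r) \ge m\}$. If $F_x(r)\ge m$, then by the pointwise estimate $F_y(r + \|f\|_\infty^{1/D}|x-y|)\ge m$, so $F_y^{-1}(m)\le r + \|f\|_\infty^{1/D}|x-y|$. Taking the infimum over such $r$ yields $F_y^{-1}(m) \le F_x^{-1}(m) + \|f\|_\infty^{1/D}|x-y|$, and the roles of $x$ and $y$ can be swapped to obtain
\[
|F_x^{-1}(m) - F_y^{-1}(m)| \;\le\; \|f\|_\infty^{1/D}\,|x-y|,
\]
which is Lipschitz continuity of $F_x^{-1}(m)$ on all of $\mathbb{R}^D$ with constant $\|f\|_\infty^{1/D}$.

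Finally, for the squared version on a compact set $K$, I would use that $\mathrm{supp}(f)$ is bounded to uniformly bound $F_x^{-1}(m)$ on $K$: since $f(X)^{1/D}d(X,x) \le \|f\|_\infty^{1/D}\,\mathrm{diam}(K\cup\mathrm{supp}(f)) =: M$ almost surely under $P$, one has $F_x^{-1}(m)\le M$ for every $x\in K$. Then
\[
\bigl|(F_x^{-1}(m))^2 - (F_y^{-1}(m))^2\bigr| \;\le\; 2M\,\|f\|_\infty^{1/D}\,|x-y|,
\]
giving the asserted local Lipschitz property. No step looks particularly delicate; the only care needed is in handling the right-continuity convention for $F_x$ when extracting the inequality $F_y^{-1}(m)\le r + \|f\|_\infty^{1/D}|x-y|$ from the infimum definition, but this is standard and costs nothing.
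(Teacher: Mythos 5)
Your proposal is correct and follows essentially the same route as the paper's proof: the set inclusion $E_{x,r}\subseteq E_{y,\,r+\|f\|_\infty^{1/D}|x-y|}$ via the triangle inequality, the infimum argument for the generalized inverse, and symmetrization in $x$ and $y$. The upgrade to the squared function is also the same idea (a Lipschitz function that is bounded on a compact set has Lipschitz square); your explicit bound $M$ via the bounded support is a harmless variant of the paper's appeal to compactness of $K$.
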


\begin{proof}
The second claim follows from the first because the square of any Lipschitz function on a compact metric space is always Lipschitz.

To establish the first claim, we have, by definition
$$F_x^{-1}(m) = \inf \{t: P \left (f(X)^{1/D} d(x, X) \leq t \right )\geq m\}.$$
Since for any $x'$,
$$\{y: f(y)^{1/D} d(x, y) \leq t\} \subseteq \{y: f(y)^{1/D} d(x', y) \leq t + \|f\|_\infty^{1/D} d(x, x')\},$$
whenever $$P \left (f(X)^{1/D} d(x, X) \leq t \right)\geq m$$
holds, we have
$$P \left (f(X)^{1/D} d(x', X) \leq t + \|f\|_\infty^{1/D} d(x, x') \right) \geq m.$$
Taking infimum over $t$ gives
$$F_{x'}^{-1}(m) \leq F_{x}^{-1}(m) + \|f\|_\infty^{1/D} d(x, x').$$
Interchanging $x$ and $x'$ shows $F_{x}^{-1}(m)$ is Lipschitz in $x$ with Lipschitz constant bounded by $\|f\|_\infty^{1/D}$.
\end{proof}

We are now in the position to prove 
\cref{prop:functional_consistency}.
\begin{proof}[Proofs of \cref{prop:functional_consistency}]
We adapt the proofs of Theorems 5 and 9 of \cite{chazal18_DTM_statistics}, the former of which is a stepping stone towards proving Theorem 9 therein.

Under the assumption of uniform continuity, $$\|\hat f_{k_{den}} - f\|_\infty \to 0$$ almost surely (Theorem 4.2 of \cite{biau_devroye_nearest_neigbhor}). Then almost surely,
$$\max_i \lvert C_{N, k_\text{den}, D} 1/d_i - f(X_i)^{1/D} \rvert \to 0,$$
hence
$$\sup_{x \in K} \left \lvert \widehat{RDAD}^2(x) - \frac{1}{k_\text{DTM}} \sum [f(X_{(i)})^{1/D} d(x, X_{(i)})]^2 \right \lvert = o_P(1).$$

The rest of the 
proof of \cref{prop:functional_consistency} 
then follows by replacing each instance of $F_x$ in the proofs of Theorems 5 and 9 of \cite{chazal18_DTM_statistics} with 
\begin{equation}\label{eqn:updated_Fx}
\mathcal{F}_x(t) = P((f(X)^{1/D} d(x, X))^2 \leq t).
\end{equation}
Note that $\mathcal{F}_x(t) = F_x(\sqrt{t})$, where $F_x$ is defined in \cref{eqn:RDAD_quantile}, which is different from the $F_x$ in \cite{chazal18_DTM_statistics}, and this is the origin of the square roots in our statement, which are not present in its counterpart in \cite{chazal18_DTM_statistics}. 

Below, we sketch the main ideas for completeness. We will highlight non-trivial adaptations and refer the readers to \cite{chazal18_DTM_statistics} for details.

The proof starts with the observation
$$\sqrt{n} (\widehat{RDAD}^2(x) - RDAD^2(x)) = A_n(x) + R_n(x),$$
where
\begin{align*}
A_n(x) &= \frac{1}{m} \int_0^{\mathcal{F}_x^{-1}(m)} \sqrt{n}(\mathcal{F}_x(t) - \hat{\mathcal{F}_x}(t)) dt\\
R_n(x) &= \frac{1}{m} \int_{\mathcal{F}_x^{-1}(m)}^{\hat{\mathcal{F}}_x^{-1}(m)} \sqrt{n}(m - \hat{\mathcal{F}_x}(t)) dt,
\end{align*}
where $\hat{\mathcal{F}_x}$ is defined as in \cref{eqn:updated_Fx} with the probability replaced by the empirical measure.

$R_n$ can be bounded with the estimate
$$\abs{R_n(x)} \leq \frac{\sqrt{n}}{m} \abs{S_n(x)}\abs{T_n(x)},$$
where
$$S_n(x)
= \abs{\mathcal{F}_x^{-1}(m) - \hat{\mathcal{F}}_x^{-1}(m)}, \quad T_n(x)
= \sup_t \abs{\mathcal{F}_x(t) - \hat{\mathcal{F}_x}(t)}.
$$

By exactly the same empirical process theory argument as in \cite{chazal18_DTM_statistics} (with all instances of Lemma 8 in \cite{chazal18_DTM_statistics} replaced by \cref{lem:lemma8_modified} here), one can show $$\sup_{x \in K} \lvert S_n(x) \rvert = o_P(1).$$

For $T_n$, by Vapnik-Chernvonenkis theorem,
\begin{align*}
\sup_{x \in K} \lvert T_n(x) \rvert
\leq \sup_{\substack{z \in \mathbb{R}^D\\ r > 0}} 
\lvert \hat{P_n}(E_{z, r}) - P(E_{z, r})\rvert 
= O(1/\sqrt{n}).
\end{align*}
The theorem is indeed applicable because the relevant classifying functions for the $E_{z, r}$'s
form a subset of a finite-dimensional space, specifically:
$$f(\cdot)^{2/D} d(z, \cdot)^2 - t \in \text{span} \{f(\cdot)^{2/D} \|\cdot\|^2, f(\cdot)^{2/D} z_1, ..., f(\cdot)^{2/D} z_D, f(\cdot)^{2/D}, t\}.$$

It remains to show $A_n$ converges to a Gaussian process. Let $\nu_n = \sqrt{n} (\hat P_n - P)$. Note that
$$A_n = \frac{1}{m} \int f_x(y) d\nu_n(y),$$
where
$$f_x(y) = \max(0, \mathcal{F}_x^{-1}(m) - (f(y)\abs{x - y })^2),$$
By Donsker's theorem, it suffices to show $f_x$'s form a $P$-Donsker family. 
By Example 19.7 of \cite{vaart98_asymptoticStat}, indeed they do:
\begin{align*}
\lvert f_x(y) - f_{x'}(y) \rvert
&\leq \lvert \mathcal{F}_x^{-1}(m) - \mathcal{F}_{x'}^{-1}(m)\rvert + \|x - x'\| (\|x\| + \|x'\| + 2\|y\|) \lvert f(y)\rvert,
\\& \leq \lvert \mathcal{F}_x^{-1}(m) - \mathcal{F}_{x'}^{-1}(m) \rvert + \|x - x'\| (2 \text{diam } K + \text{diam } \text{supp} f) \|f\|_\infty,
\end{align*}
where the first term is bounded by a multiple of $\|x - x'\|$ because \cref{lem:lemma8_modified} shows $\mathcal{F}_x^{-1}(m) = (F_x^{-1}(m))^2$ is Lipschitz in $x$.

Finally, the claim on the covariance kernel follows from passing the convariance of $A_n$ to the limit.

\end{proof}

\section{Details of Simulations}
\label{sec:simulation_parameters}
We give details on the constructions of our synthetic datasets, and the parameters used in our experiments, in this section. 
Model
parameters are summarized in \cref{tab:variables_model}. The sample sizes and the density estimation parameters $k_\text{den}$, which depends on the sample sizes, are summarized in \cref{tab:variables_sample_size_k}.

\begin{table}[h]
\begin{tabularx}{\linewidth}{c|c|X}
parameters & values & meaning \\\hline
$k_\text{den}$ & $\lceil (\log_{10} N)^2 \rceil $ & density estimated by the $k_\text{den}$-nearest neighbor estimator; $N$ is the sample size
\\
$m_\text{DTM}$ & 0.002 & amount of mass taken into account by the distance-to-measure setup\\
$\Delta x$ (two-square) & 0.02 & grid size at which the filtration functions are evaluated in the two-square experiments\\
$\Delta x$ (Voronoi) & 0.01 & grid size at which the filtration functions are evaluated in the Voronoi experiment\\
$\Delta x$ (cellular towers) & 0.260 & grid size at which the filtration functions are evaluated in the cellular tower experiment
\\
$B$ & 100 & the number of bootstrap samples\\
$\alpha$ & 0.05 & Confidence sets are bottleneck metric balls whose radii are $(1 - \alpha)$-percentile of the bottleneck distances of the empirical persistence diagram from the diagrams of the bootstrap samples.
\end{tabularx}
\caption{Model parameters used in the simulations.}
\label{tab:variables_model}
\end{table}

\begin{table}[h]
\centering
\begin{tabularx}{.6\linewidth}{c|cc}
datasets          & $N$ & $k_\text{den}$ 
\\\hline
two-square -- David and Goliath & 500 & 8\\
two-square -- all others & 5000 & 14\\
Voronoi -- noisy & 10676 & 17\\
cellular towers & 23389 & 20
\end{tabularx}
\caption{Sample sizes ($N$) and density estimation paremeter $k_\text{den}$ For different datasets.}
\label{tab:variables_sample_size_k}
\end{table}

For two-square datasets, the precise values used are summarized in 
\cref{tab:bisquare_variables_setup}.
The inner radius $r$ and outer radius $R$ of a square annulus refer to half of the sidelengths of the inner and outer squares in a square annulus. This is illustrated in \cref{fig:TDA_illustration_circular_annulus}.

\begin{table}
\begin{tabularx}{\linewidth}{c|c|c|X}
parameters & David \& Goliath & other datasets & meanings \\\hline
$(x_1, x_2)$ & (0, 4) & (0, 4) & $x$-coordinates of centers of the square annuli\\
$(y_1, y_2)$ & (0, 0) & (0, 0) & $y$-coordinates of centers of the square annuli \\
$(p_1, p_2)$ & (0.4, 0.6) & (0.5, 0.5) & masses of the square annuli \\
$(r_1, r_2)$ & (1, 0.1) & (1, 1/3) & inner radii of the square annuli, see \cref{fig:TDA_illustration_circular_annulus}\\
$(R_1, R_2)$ & (1.1, 0.12) & (1.4, 1.4/3) & outer radii of the square annuli, see \cref{fig:TDA_illustration_circular_annulus}\\
$(\sigma_1, \sigma_2)$ & -- & (0.15, 0.05) & standard deviations of the isotropic Gaussian noises on the square annuli\\
$N_\text{outliers}$ & -- & 8 & number of outliers
\end{tabularx}
\caption{Parameters used to generate two-square datasets (in alphabetical order, Greek letters follow Latin ones).
}
\label{tab:bisquare_variables_setup}
\end{table}

\begin{figure}[h]
\centering
\includegraphics[width=0.35\linewidth]{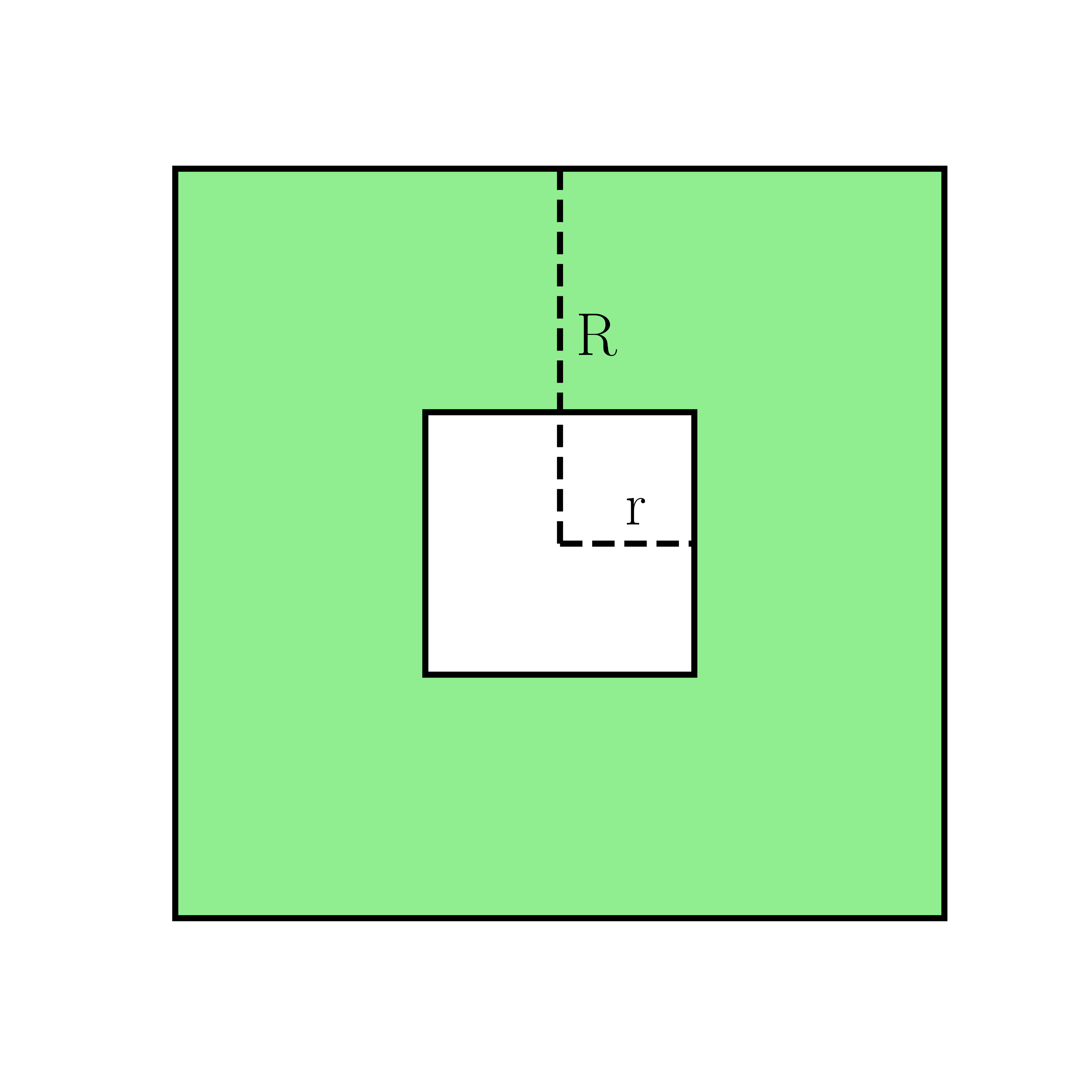}
\caption{Inner and outer radii of a square annulus}
\label{fig:TDA_illustration_circular_annulus}
\end{figure}

For the Voronoi dataset, we first describe the data generation process. The actual values of parameters used is summarized in \cref{tab:voronoi_variables_setup}.

\begin{description}
\item[Generation of the Voronoi diagram] A fixed number $M^+_\text{cell}$ of centers of Voronoi cells are sample points on an infinite strip $\mathbb{R} \times [-y_+, y_+]$ so that more points will be near the central vertical line $y = 0$. Specifically, the $x$- and $y$-coordinates are sampled independently from a biexponential distribution $\text{Biexp}(\lambda)$ with scale parameter $\lambda$ and a uniform distribution respectively. The Voronoi diagram is then generated from these Voronoi cell centers. Since more points are sampled at the central vertical line, cells near the line are smaller than those on the right. 

\item[Generation of a super-sample] 
We sample $N^+$ points from the equal-weight mixture of the uniform measures supported on the boundaries of the cells. Then smaller cells have edges with higher edge density.  

\item[Corruption by additive noise] To add noise to the dataset, for each sample point $(x, y)$ on edges of the Voronoi diagram, we perturb it with independent mean-zero isotropic Gaussian noise, whose standard deviation is $\sigma_0 e^{\lvert x \rvert/\lambda}$,
so points near the central vertical line are corrupted by a smaller noise.

\item[Removal of ill-behaving points to obtain the sample] Since cells near the boundary, even if finite, tend to be very elongated, we discard all points outside of a rectangle $R$. This motivates our choice that outliers lie in $R$, because outliers lying outside of $R$ will be discarded. We analyze the dataset formed by the remaining $N$ points. The framing rectangles of the scatter plots as well as plots of significant loops in \cref{sec:voronoi} are all the rectangle $R$.

\end{description}

\begin{table}
\begin{tabularx}{\linewidth}{c|c|X}
parameters & values & meaning\\\hline
$M_\text{cell}$    &    88 & number of Voronoi cells completely contained in the rectangle $R$\\
$M^+_\text{cell}$  &   200 & number of cells in the full Voronoi diagram\\
$N^+$              & 20000 & number of sample points in the full Voronoi diagram\\
$p_\text{outlier}$ & 0.002 & proportion of the $N^+$ sample points that are replaced by outliers\\
$R$                &    -- & the rectangle $[-x_0, x_0] \times [-y_0, y_0]$ only on which sample points are passed to topological computation\\
$x_0$              &     3 & maximum absolute value of the $x$-coordinates of sample points that are passed to topological computation\\
$y_0$              &     1 & maximum absolute value of the $y$-coordinates of sample points that are passed to topological computation\\
$y_+$              &     2 & maximum possible absolute values of the $y$-coordinates of Voronoi cell centers \\
$\lambda$          &     1 & scale parameter of the biexponential distribution, from which the $x$-coordinates of Voronoi cell centers are drawn\\
$\sigma_0$         &  0.01 & scale parameter of the additive Gaussian noise
\end{tabularx}
\caption{Parameters used to generate Voronoi datasets}
\label{tab:voronoi_variables_setup}
\end{table}

For the cellular tower dataset, it is preprocessed as follows. Only towers in the contiguous United States are retained. Incorrectly labelled towers are left as-is, except that one Texas tower, which is erroneously labelled to be in the middle of the Atlantic Ocean, is removed. We treat the longitude and latitude of each of the remaining 23389 towers as the $x$- and $y$-coordinates of a data point.

From the data points, the filtration function values are evaluated on the grid, on the rectangle $[-126, -65.8] \times [23.9, 50.0]$, which contains all points. The grid size is $1/100^{th}$ of the shorter side of the rectangle.

\end{appendices}

\bibliography{../paper/tex/bib}

\end{document}